\newtheorem{theo}{Theorem}
\newtheorem{coro}{Corollary}
\newtheorem{lemm}{Lemma}
\newtheorem{prop}{Proposition}
\newtheorem{defi}{Definition}
\newtheorem{rema}{Remark}
\newtheorem{exem}{Example}
\newenvironment{proof}{\noindent {\it Proof}.}{\hfill$\Box$}
\numberwithin{equation}{section}
\renewcommand\theenumi{\@alph\c@enumi}
\renewcommand\theenumii{\@alph\c@enumii}
\renewcommand\theenumiii{\@alph\c@enumiii}
\renewcommand\theenumiv{\@alph\c@enumiv}
\let\@listlla\list
\def\list#1#2{\@listlla{#1}{#2%
      \itemsep=2pt\parsep=0pt\topsep=3pt plus 1pt minus 1 pt}}
\newenvironment{case}[2][Case]{%
  \trivlist \item[\hskip\labelsep{\bfseries #1 #2.}]\begin{em}}{
  \end{em}\endtrivlist}
\newcommand{\superparagraph}[1]{\trivlist
  \item[\hskip\labelsep{\bfseries\boldmath$\bullet$%
     \ #1.}]\endtrivlist
}
\newcommand{\subsubR}{_{_{\mbox{\scriptsize$\IR$}}}}
\newcommand{\Clos}[1]{\ensuremath{\operatorname{Clos}\left(#1\right)}}
\DeclareMathOperator{\Int}{Int}
\DeclareMathOperator{\Id}{Id}
\DeclareMathOperator{\Card}{Card}
\DeclareMathOperator{\Per}{Per}
\newcommand{\PerR}{\ensuremath{\Per\subsubR}}
\DeclareMathOperator{\Orb}{Orb}
\DeclareMathOperator{\B}{B}
\DeclareMathOperator{\Fol}{Fol}
\DeclareMathOperator{\Const}{Const}
\DeclareMathOperator{\Rot}{Rot}
\newcommand{\RotR}{\ensuremath{\Rot\subsubR}}
\newcommand{\set}[2]{\ensuremath{\{#1 \,\colon #2\}}}
\def\@map#1#2[#3]{\mbox{$#1 \colon #2 \longrightarrow #3$}}
\def\map#1#2{\@ifnextchar [{\@map{#1}{#2}}{\@map{#1}{#2}[#2]}}
\def\@chull#1[#2]{\ensuremath{\langle #1 \rangle_{_{#2}}}}
\def\chull#1{%
  \@ifnextchar [{\@chull{#1}}{\ensuremath{\langle #1 \rangle}}%
}
\newcommand{\evalat}[1]{\bigr\rvert_{#1}}
\newcommand{\IN}{\ensuremath{\mathbb{N}}}
\newcommand{\IZ}{\ensuremath{\mathbb{Z}}}
\newcommand{\IQ}{\ensuremath{\mathbb{Q}}}
\newcommand{\IR}{\ensuremath{\mathbb{R}}}
\newcommand{\IC}{\ensuremath{\mathbb{C}}}
\newcommand{\eps}{\varepsilon}
\newcommand{\CA}{\mathcal{A}}
\newcommand{\CB}{\mathcal{B}}
\newcommand{\CC}{\mathcal{C}}
\newcommand{\CD}{\mathcal{D}}
\newcommand{\CE}{\mathcal{E}}
\newcommand{\CG}{\mathcal{G}}
\newcommand{\CL}{\mathcal{L}}
\newcommand{\CP}{\mathcal{P}}
\newcommand{\modi}{\ensuremath{\kern -0.55em\pmod{1}}}
\newcommand{\rhos}[1]{\rho_{_{#1}}}
\newcommand{\orhos}[1][F]{\overline{\rho}_{_{#1}}}
\newcommand{\urhos}[1][F]{\underline{\rho}_{_{#1}}}
\newcommand{\Fcrbas}[2]{#1^r_{#2}}
\newcommand{\Fncr}{\Fcrbas{F}{n}}
\newcommand{\Fcr}{\Fcrbas{F}{1}}
\DeclareMathOperator{\InfX}{\mbox{\upshape\sffamily\bfseries T}}
\newcommand{\InfG}{\InfX^{\circ}}
\newcommand{\Li}[1][T]{\ensuremath{\mathcal{C}_1(#1)}}
\newcommand{\labelarrow}[1]{\stackrel{#1}{\longrightarrow}}
\newcommand{\pluscover}[1]{\xrightarrow[#1]{%
   \hspace*{.25em}+\hspace*{.1em}}
}
\title{Rotation sets for graph maps of degree~1
\footnotetext{{\it Keywords:} rotation numbers, graph maps, sets of periods.}
\footnotetext{{\it Math. classification:} 37E45, 37E25, 54H20, 37E15.}
\footnotetext{  }
\footnotetext{Annales de l'Institut Fourier, {\bf 58}, No. 4, 1233-1294, 2008.}
}
\author{Llu\'{\i}s Alsed\`a and Sylvie Ruette}
\date{}
\begin{document}
\maketitle
\begin{abstract}
For a continuous map on a topological graph containing a loop $S$ it
is possible to define the degree (with respect to the loop $S$) and,
for a map of degree $1$, rotation numbers. We study the rotation set
of these maps and the periods of periodic points having a given
rotation number. We show that, if the graph has a single loop $S$ then
the set of rotation numbers of points in $S$ has some properties
similar to the rotation set of a circle map; in particular it is a
compact interval and for every rational $\alpha$ in this interval
there exists a periodic point of rotation number $\alpha$.

For a special class of maps called combed maps, the rotation set
displays the same nice properties as the continuous degree one circle
maps.
\end{abstract}

\section*{Introduction}

One of the basic problems in combinatorial and topological dynamics is
the characterisation of the sets of periods in dimension one. This
problem has its roots and motivation in the striking Sharkovskii
Theorem \cite{Sar,Shartrans}. Since then, a lot of effort has been
spent in finding characterisations of the set of periods for more
general one dimensional spaces.

One of the lines of generalisation of Sharkovskii Theorem consists on
characterising the possible sets of periods of continuous self maps on
trees. The first remarkable results in this line after \cite{Sar} are
due to Alsed\`a, Llibre and Misiurewicz \cite{ALM2} and Baldwin
\cite{Bal}. In \cite{ALM2} it is obtained the characterisation of the
set of periods of the continuous self maps of a 3-star with the
branching point fixed in terms of three linear orderings, whereas in
\cite{Bal} the characterisation of the set of periods of all
continuous self maps of $n$-stars is given (an $n$-star is a tree
composed of $n$ intervals with a common endpoint). Further extensions
of Sharkovskii Theorem are due to:
\begin{list}{$\bullet$}{\setlength{\leftmargin}{\parindent}}
\item Baldwin and Llibre \cite{BL} to continuous maps on trees such
that all the branching points are fixed,
\item Bernhardt \cite{Bern} to continuous maps on trees such that all
the branching points are periodic,
\item Alsed\`a, Juher and Mumbr\'u \cite{AJM1, AJM2, AJM3, AJM4} to
the general case of continuous tree maps.
\end{list}

Another line of generalisation of Sharkovskii Theorem is to consider
spaces that are not contractile to a point. In particular topological
graphs which are not trees, the circle being the simplest one. This
case displays a new feature: While the sets of periods of continuous
maps on trees can be characterised using only a finite number of
orderings, the sets of periods of continuous circle maps of degree one
contain the set of all denominators of all rationals (not necessarily
written in irreducible form) in the interior of an interval of the
real line. As a consequence, these sets of periods cannot be expressed
in terms of a finite collection of orderings. The result which
characterises the sets of periods of continuous circle maps of degree
one is due to Misiurewicz \cite{Mis} and uses as a key tool the
rotation theory. Indeed, the sets of periods are obtained from the
rotation interval of the map.

The characterisation of the sets of periods for circle maps of degree
different from one is simpler than the one for the case of degree one.
It is due to Block, Guckenheimer, Misiurewicz and Young \cite{BGMY}.

Finding a generalisation of the Sharkovskii Theorem for self maps of a
topological graph which is not the circle is a big challenge and in
general it is not known what the sets of periods may look like.
However, in this setting, one expects to find at least sets of periods
of all possible types appearing for tree and circle maps.

Two motivating results that give some insight on the kind of sets of
periods that one can find in this setting are \cite{LLl} and
\cite{LPR}. The first of them deals with continuous self maps on a
graph $\sigma$ consisting on a circuit and an interval attached at a
unique branching point $b$ such that the maps fix $b$. The second one
studies the continuous self maps of the 2-foil (that is, the graph
consisting on two circles attached at a single point).

Our aim is to go forward in the generalisation of \cite{LLl} by using
the ideas and techniques of \cite{Mis, BGMY}. To this end we need to
develop a rotation theory for continuous self maps of degree one of
topological graphs having a unique circuit and, afterwards, we need to
apply this theory to the characterisation of the sets of periods of
such maps.

In this paper we propose a rotation theory for the above class of maps
and we study the relation between the rotation numbers and the
periodic orbits. The use of this theory in the characterisation of the
sets of periods of such maps will be the goal of a future project.

A rotation theory is usually developed in the universal covering space
by using the liftings of the maps under consideration. It turns out
that the rotation theory on the universal covering of a graph with a
unique circuit can be easily extended to a wider family of spaces.
These spaces are defined in detail in Subsection~\ref{subsec:LS} and
called \emph{lifted spaces}. Each lifted space $T$ has a subset
$\widehat{T}$ homeomorphic to the real line $\IR$ that corresponds to
an ``unwinding'' of a distinguished circuit of the original space.

In the rest of this section (and in fact in the whole paper) we will
abuse notation and denote the set $\widehat{T}$ by $\IR$ for
simplicity.

Given a lifted space $T$ and a map $F$ from $T$ to itself of degree
one, there is no difficulty to extend the definition of rotation
number to this setting in such a way that every periodic point still
has a rational rotation number as in the circle case. However, the
obtained rotation set $\Rot(F)$ may not be connected and we do not
know yet whether it is closed. Despite of this fact, the set
$\RotR(F)$ corresponding to the rotation numbers of all points
belonging to $\IR$, has properties which are similar to (although
weaker than) those of the rotation interval for a circle map of degree
one.

Also, there is a special class of degree one continuous maps on lifted
spaces that we call \emph{combed maps}, whose rotation set displays
the same nice properties as the continuous degree one circle maps.

The paper is organised as follows. Section~\ref{sec:DEP} is devoted to
fixing the notation, to defining the notion of rotation number and
rotation set in this setting, and to studying the basic properties of
this set. In Section~\ref{sec:pos-cov} we introduce the technical
notion of \emph{positive covering} and, by means of its use, we prove
a result that will be used throughout the paper.

Section~\ref{sec:rotation-set} is devoted to studying the
basic properties of the rotation set. It is divided into two
subsections. In the first one (Subsection~\ref{subsec:concompRS}) we
study the connectedness and compactness of the rotation set whereas in
the second one (Subsection~\ref{sec:periods-X}) we describe the
information on the periodic orbits of the map which is carried out by
the rotation set.

In Section~\ref{sec:circular} we define the combed maps and, for this
class of maps, we study the special features of the rotation set and
its relation with the set of periods.

Section~\ref{sec:graph} specialises the results obtained previously in
the particular case when the lifted space is a graph. Finally,
Section~\ref{sec:examples} is devoted to showing some examples and
counterexamples to illustrate some previous comments and results.

We thank an anonymous referee for detailed and clever comments that
helped us improving the writing of a previous version of the paper,
and Bill Allombert for Lemma~\ref{lem:BA}.

\section{Definitions and elementary properties}\label{sec:DEP}
\subsection{Lifted spaces and retractions}\label{subsec:LS}

The aim of this subsection is to define in detail the class of
\emph{lifted spaces} where we will develop the rotation theory. They
are obtained from a metric space by unwinding one of its loops.
This gives a new space that contains a subset homeomorphic to the real
line and that is ``invariant by a translation''. This construction
mimics the process of considering the universal covering space of a
compact connected topological graph that has a unique loop.

Before defining lifted spaces we will informally discuss a couple of
examples to fix the ideas. Consider the topological graph $G$
represented in Figure~\ref{fig:hatG}. The unwinding of $G$ with
respect to the loop $S$ is the infinite graph $\widehat{G}$ which is
made up of infinitely many subspaces $(\widehat{G}_n)_{n\in\IZ}$ that
are all homeomorphic by a translation $\tau$. Moreover, there is a
continuous projection $\map{\pi}{\widehat{G}}[G]$ such that
$\pi\evalat{\Int(\widehat{G}_n)}$ is a homeomorphism onto
$G\setminus\{x_0\}$ for each $n \in \IZ$, and $\pi(\tau (y))=\pi(y)$
for all $y\in \widehat{G}$. The set $\pi^{-1}(S)$ is homeomorphic to
the real line. If we imagine that the loop $S$ has length $1$ and that
$x_0$ is the origin, then it is natural to consider a homeomorphism
$\map{h}{\IR}[\pi^{-1}(S)]$ such that $\pi^{-1}(x_0)=h(\IZ)$. In this
setting, $\tau(h(x))=h(x+1)$ for all $x\in\IR$.

Note that, since $G$ has more than one loop, $\widehat{G}$ is not the
universal covering of $G$.

\begin{figure}[htb]
\centerline{\includegraphics[width=25pc]{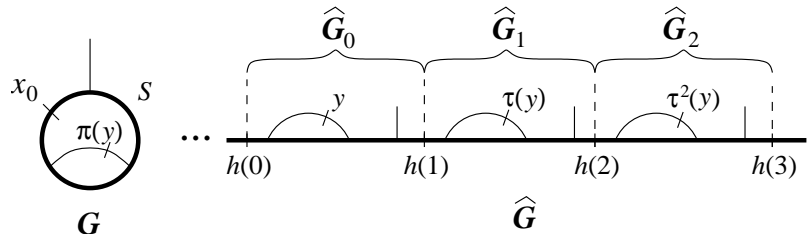}}
\caption{The graph $G$, on the left, is unwound with respect to the
bold loop $S$, on the right. In $\widehat{G}$, the bold line is
$\pi^{-1}(S)=h(\IR)$ and $h(\IZ) = \pi^{-1}(x_0)$. $\widehat{G}$ in
this example will not be considered as a lifted space since it cannot
be retracted to $h(\IR)$.\label{fig:hatG}}
\end{figure}

In a similar way, we can unwind any connected compact metric $X$
with a loop, as in Figure~\ref{fig:InfX}. These two examples have a
main difference: the space $\widehat{X}$ shown in
Figure~\ref{fig:InfX} can be ``retracted'' to $h(\IR)$ because the
closure of any connected component of $\widehat{X}\setminus h(\IR)$
meets $h(\IR)$ at a single point, whereas this property does not hold
for $\widehat{G}$ in Figure~\ref{fig:hatG}. Notice that the unwinding
of a graph with a single loop always has this property.  In this
paper, we deal with spaces $\widehat{X}$ of the type shown in
Figure~\ref{fig:InfX}.

\begin{figure}[htb]
\centerline{\includegraphics[width=25pc]{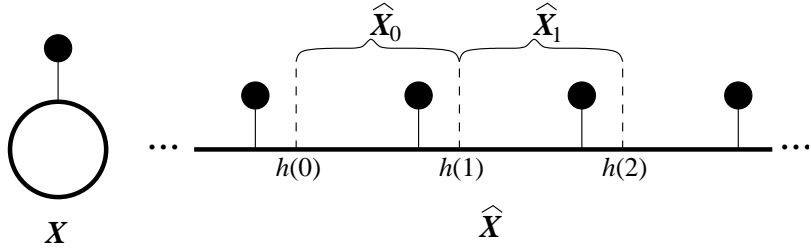}}
\caption{The unwinding of a connected compact metric space $X$ with a
loop. In this example $\widehat{X}$ can be retracted to
$h(\IR)$.\label{fig:InfX}}
\end{figure}

Now we formalise the definition of this class of spaces.

\begin{defi}\label{def:liftedspace}
Let $T$ be a connected metric space. We say that $T$ is a
\emph{lifted space} if there exists a homeomorphism $h$ from $\IR$
into $T$, and a homeomorphism $\map{\tau}{T}$ such that
\begin{enumerate}[(i)]
\item $\tau(h(x)) = h(x+1)$ for all $x\in\IR$,
\item the closure of each connected component of  $T \setminus h(\IR)$
is a compact set that intersects $h(\IR)$ at a single point, and
\item the number of connected components $C$ of $T \setminus h(\IR)$
such that $\Clos{C} \cap h([0,1]) \neq \emptyset$ is finite.
\end{enumerate}
The class of all lifted spaces will be denoted by $\InfX$.
\end{defi}

\begin{rema}\label{rem:tauplusone}
By replacing $h(x)$ by $h(x+a)$ for some appropriate $a$, if
necessary, we may assume that $h(\IZ)$ does not intersect the closure
of any connected component of $T \setminus h(\IR)$. In this situation,
for every $n \in \IZ$, let $T_n$ denote the closure of the connected
component of $T\setminus\{h(n),h(n+1)\}$ intersecting $h((n,n+1))$.
Then  $\map{\tau\evalat{T_n}}{T_n}[T_{n+1}]$ is a homeomorphism.
\end{rema}

To simplify the notation, in the rest of the paper we will identify
$h(\IR)$ with $\IR$ itself. In particular, we are implicitly extending
the usual ordering, the arithmetic and the notion of intervals from
$\IR$ to $h(\IR)$.

Observe that, in the above setting,
Definition~\ref{def:liftedspace}(i) gives $\tau(x) = x+1$ for all
$x\in\IR$. Taking this and Remark~\ref{rem:tauplusone} into account,
it is natural to visualise the homeomorphism $\tau$ as a ``translation
by 1'' in the whole space $T$ (despite of the fact that such an
arithmetic operation need not be defined). Thus, in what follows, to
simplify the formulae we will abuse notation and write $x+1$ to denote
$\tau(x)$ for all $x \in T$. Then the fact that $T$ is homeomorphic to
itself by $\tau$ can be rewritten in this notation as: $T + 1 = T$.
Note also that, since $\tau$ is a homeomorphism, this notation can be
extended by denoting $\tau^m(x)$ by $x+m$ for all $m \in \IZ$. In what
follows, if $A\subset T$ is a set and $m \in \IZ$ then $A + m$ will
denote $\set{x+m}{x\in A}$.

\begin{exem}
To better understand the simplifications introduced above consider the
following paradigmatic particular case (see Figure~\ref{fig:inftrees}
for an example): The lifted space $T$ is embedded in $\IR^n$ and the
map $\tau(\overrightarrow{x})$ is defined as $\overrightarrow{x} +
\overrightarrow{e_1}$, where $\overrightarrow{e_1} = (1,0,\dots,0)$
denotes the first vector in the canonical base. Then $T$ must contain
the line $t\overrightarrow{e_1}$ for $t \in \IR,$ and the map $h$ from
Definition~\ref{def:liftedspace} is defined by $h(t) =
t\overrightarrow{e_1}$.
\end{exem}

\begin{figure}[htb]
\centerline{\includegraphics{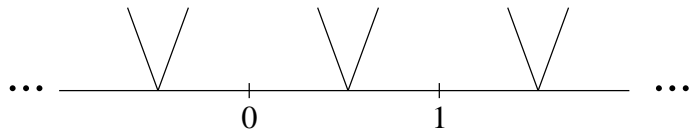}}
\caption{An example of a lifted tree that can be embedded in
$\IR^2$.\label{fig:inftrees}}
\end{figure}

Next we introduce a tool that will play a crucial role in the rest of
the paper. It is the retraction from $T$ to $\IR$. It will be used as
a measuring tool of displacements to the left or to the right and also
to identify the place where the image of a point lies in $T \setminus
\IR$.

\begin{defi}
Given $T\in \InfX$ there is a natural retraction from $T$ to $\IR$
that in the rest of the paper will be denoted by $r$. When $x\in\IR$,
then clearly $r(x) = x$. When $x \notin \IR$, by definition, there
exists a connected component $C$ of $T \setminus \IR$ such that $x\in
C$ and $\Clos{C}$ intersects $\IR$ at a single point $z$. Then $r(x)$
is defined to be, precisely, the point $z$. In particular, $r$ is
constant on $\Clos{C}$.

A point $x \in \IR$ such that $r^{-1}(x) \neq \{x\}$ will be called a
\emph{branching point of $T$}. The set of all branching points of $T$
will be denoted by $\B(T)$. It is a subset of $\IR$ by definition.
\end{defi}

The next lemma recalls the basic properties of the natural retraction.
Its proof is a simple exercise and thus it will be omitted.

\begin{lemm}\label{lem:const-r}
For each $T\in\InfX$ the following statements hold:
\begin{enumerate}
\item If $x\notin \IR$, then  there exists a neighbourhood $U$ of $x$
such that $r$ is constant in $U$.
\item The map $\map{r}{T}[\IR]$ is continuous and verifies $r(x+1) =
r(x)+1$ for all $x\in T$.
\end{enumerate}
\end{lemm}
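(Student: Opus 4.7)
The plan is to prove (1) first, then derive (2) in three pieces: continuity at points of $T\setminus\IR$ (immediate from (1)), continuity at points of $\IR$ (a local argument using (ii) and (iii)), and the identity $r(x+1)=r(x)+1$ (from $\tau$ being a homeomorphism that permutes components and translates $\IR$).

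For (1), fix $x\notin\IR$ and let $C$ be the connected component of $T\setminus\IR$ containing $x$, with attachment point $z_C=\Clos{C}\cap\IR$; by the definition of $r$, one has $r\equiv z_C$ on $C$, so it suffices to exhibit a neighbourhood of $x$ lying in $C$. A preliminary observation I would establish is $\Clos{C}=C\cup\{z_C\}$: any point $y\in\Clos{C}\setminus(C\cup\IR)$ would lie in some other component $C'$, and then $C\cup C'\cup\{y\}$ would be a connected subset of $T\setminus\IR$ strictly containing $C$, contradicting maximality. From this, combined with (iii) (translated via $\tau$ to give that only finitely many components attach in any bounded interval of $\IR$) and the compactness of the closures in (ii), I would argue that only finitely many components can come near $x$, each at positive distance except $C$ itself, so a small ball around $x$ is contained in $C$.

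For (2), continuity at $x\notin\IR$ is immediate from (1). For continuity at $x\in\IR$, given $\eps>0$ I would use (iii) to enumerate the finitely many components whose closures meet the bounded interval $h([x-1,x+1])$. Among these, discard the ones whose attachment point lies outside $(x-\eps,x+\eps)$: their closures are finitely many compact sets not containing $x$, hence at positive distance from $x$. Choosing $\delta>0$ smaller than these distances and than $\eps$, any $y\in B(x,\delta)$ must lie either in $\IR\cap(x-\eps,x+\eps)$ or in the closure of a component with attachment point already in $(x-\eps,x+\eps)$, yielding $|r(y)-x|<\eps$. Finally, since $\tau$ is a homeomorphism with $\tau(\IR)=\IR$ and $\tau(y)=y+1$ on $\IR$ by (i), it permutes the components of $T\setminus\IR$; if $C$ has attachment $z_C$ then $\tau(C)$ has attachment $z_C+1$, giving $r(x+1)=r(\tau(x))=z_C+1=r(x)+1$ for $x\in C$, and the identity is trivial for $x\in\IR$.

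The main obstacle is the continuity at points of $\IR$: one has to exclude the possibility that points from arbitrarily distant components (or from far-away parts of $\IR$) approach $x$ in the metric of $T$. The essential input is (iii), which together with the $\tau$-equivariance confines the relevant components to a finite list, after which compactness from (ii) finishes the estimate.
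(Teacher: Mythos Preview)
The paper omits this proof entirely, calling it ``a simple exercise'', so your sketch already goes well beyond what the paper provides. The overall structure you propose---local constancy on each branch for (a), a case split (off $\IR$ versus on $\IR$) for continuity in (b), and the $\tau$-equivariance argument for $r(x+1)=r(x)+1$---is exactly the intended one, and your verification of the last identity via $\tau$ permuting components and sending attachment points to their shifts is clean.

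One step deserves a closer look, namely the obstacle you yourself flag: ruling out that components attached far along $\IR$, or distant points of $h(\IR)$, accumulate near a given $x$. Axiom~(iii) together with $\tau$-equivariance bounds the number of components \emph{attached} in each $h([n,n+1])$, but it does not by itself bound how many components are \emph{metrically close} to $x$, nor does it directly give that $h(\IR)$ is closed. What you can extract from~(ii) is that for each individual $C'$ with $z_{C'}\neq r(x)$ one has $x\notin\Clos{C'}=C'\cup\{z_{C'}\}$ and hence $d(x,C')>0$; the passage to a uniform bound over the infinite family (and the exclusion of $h(t_n)\to x$ with $|t_n|\to\infty$) seems to require a further regularity hypothesis, for instance the $\tau$-invariance of the metric that the paper introduces only later in Definition~\ref{def:almosttaxicab}, or the tacit assumption that $T$ arises by unwinding a compact space. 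The paper does not isolate this point, so your treatment is at the level of detail the authors expect.
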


\subsection{Maps and orbits on lifted spaces}\label{subsec:MaO}
The aim of this subsection is to study which is the object that
corresponds to orbits at the level of lifted spaces. We start by
generalising the notion of lifting and degree to this setting.

Suppose that $X$ is a metric space with a loop $S$ and that the
unwinding of $S$ gives a lifted space $T \in \InfX$. Then, there
exists a continuous map $\map{\pi}{T}[X]$, called the \emph{standard
projection from $T$ to $X$}, such that $\pi([0,1]) = S$ and $\pi(x+1)
= \pi(x)$ for all $x \in T$.

Let $\map{f}{X}$ be continuous. By using standard techniques (see for
instance \cite{Wa}) it is possible to construct a (non-unique)
continuous map $\map{F}{T}$ such that $f\circ\pi=\pi\circ F$. Each of
these maps will be called a \emph{lifting of $f$}.

Observe that $f\circ\pi=\pi\circ F$ implies that $F(1) - F(0) \in \IZ$
and, as the next lemma states, this number is independent of the
choice of the lifting. It is called indistinctly the \emph{degree of
$f$} or the \emph{degree of $F$} and denoted by $\deg(f)$ and
$\deg(F)$.

The next lemma, whose proof is straightforward (see for instance
\cite[Section~3.1]{ALM}), summarises the basic properties of lifting
maps.

\begin{lemm}\label{lem:liftings-general}
Let $\map{f}{X}$ be continuous. If the continuous map $\map{F}{T}$ is
a lifting of $f$ then $F(x+1) = F(x) + \deg(f)$  for every $x \in T$.
On the other hand, if $\map{F'}{T}$ is continuous, then $F'$ is a
lifting of $f$ if and only if $F = F' + k$ for some $k \in \IZ$.
Moreover, the following statements hold for all $x\in\IR,$ $k\in\IZ$
and $n \ge 0$:
\begin{enumerate}
\item $F^n(x+k) = F^n(x) + k\deg(f)^n$, and
\item $(F+k)^n(x) = F^n(x) + k(1+d+d^2+\dots+d^{n-1})$, with $d =
\deg(f)$.
\end{enumerate}
If $g$ is another continuous map from $X$ into itself, then $\deg(g
\circ f) = \deg(g)\cdot \deg(f)$.
\end{lemm}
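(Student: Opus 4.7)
The plan is to exploit the fact that, for two points $y,y' \in T$ with $\pi(y) = \pi(y')$, the structure of a lifted space forces $y' = y + k$ for some $k\in\IZ$, so that an integer-valued function of $x$ which is also continuous must be constant on the connected space $T$. Every assertion in the lemma will be reduced to this observation plus a short induction.

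First I would show $F(x+1) = F(x) + \deg(f)$ for all $x\in T$. The relation $\pi\circ F = f\circ\pi$ together with $\pi(x+1) = \pi(x)$ gives $\pi(F(x+1)) = \pi(F(x))$, so there is an integer $k(x)$ with $F(x+1) = F(x) + k(x)$. The map $x \mapsto k(x)$ is then integer-valued and continuous on the connected space $T$ (this uses that $\tau$ is a homeomorphism and that locally the identification $y \leftrightarrow y + k$ is well-defined via lifting of paths), hence constant; evaluating at $x=0$ identifies this constant with $F(1)-F(0) = \deg(f)$. The same argument yields the "iff" characterisation: two liftings $F, F'$ satisfy $\pi\circ F = \pi\circ F'$, so $F'(x) = F(x) + k(x)$ with $k(x)\in\IZ$ continuous, hence constant; conversely, any $F + k$ is clearly a lifting, and this simultaneously shows that $\deg(f)$ depends only on $f$ (not on the choice of lifting).

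For statement (1) I would first fix $n=1$ and prove $F(x+k) = F(x) + k\deg(f)$ by induction on $|k|$, using the previously proved case $k=1$ (and $k=-1$ via $F(x) = F((x-1)+1) = F(x-1) + \deg(f)$). Then induction on $n$ gives
\[
F^{n}(x+k) = F\bigl(F^{n-1}(x) + k\deg(f)^{n-1}\bigr) = F^n(x) + k\deg(f)^{n-1}\cdot\deg(f) = F^n(x) + k\deg(f)^n.
\]
Statement (2) follows by another induction on $n$: assuming $(F+k)^{n-1}(x) = F^{n-1}(x) + k(1+d+\cdots+d^{n-2})$, apply $F+k$ and use statement (1) with $n=1$ to pull the constant through $F$, obtaining the extra factor of $d$ on that constant plus a new $+k$ term, which telescopes to $k(1+d+\cdots+d^{n-1})$.

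Finally, for the multiplicativity of the degree, take liftings $F$ of $f$ and $G$ of $g$; then $G\circ F$ is a lifting of $g\circ f$ because $\pi\circ G\circ F = g\circ\pi\circ F = g\circ f\circ \pi$. Hence $\deg(g\circ f) = (G\circ F)(1) - (G\circ F)(0) = G(F(0)+\deg(f)) - G(F(0))$, which equals $\deg(f)\cdot\deg(g)$ by statement (1) applied to $G$. The main subtlety throughout is the first step: one must check carefully that "two points in $T$ with the same projection differ by an integer translate" is a genuine consequence of the lifted-space axioms and that the resulting integer varies continuously with $x$, so that the connectedness-forces-constancy argument is valid on all of $T$ (not just on the copy of $\IR$ inside $T$); away from $\IR$ this follows from Lemma~\ref{lem:const-r} since $r$, and hence locally the identification of fibres of $\pi$, is locally constant.
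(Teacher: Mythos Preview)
The paper does not actually prove this lemma: it states that the proof is ``straightforward (see for instance \cite[Section~3.1]{ALM})'' and moves on. Your argument is the standard one and is correct in outline; there is nothing to compare against beyond noting that your approach matches the classical covering-space reasoning the paper is implicitly invoking.

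One small point worth tightening: your justification that $x\mapsto k(x)$ is continuous is slightly roundabout. Rather than appealing to local constancy of $r$ off $\IR$ and ``lifting of paths'', the cleanest route (available here since Lemma~\ref{lem:const-r} precedes this lemma) is to apply $r$ to the identity $F(x+1)=\tau^{k(x)}(F(x))$ and use $r\circ\tau = r+1$ to get $k(x)=r(F(x+1))-r(F(x))$, which is manifestly continuous and integer-valued on the connected space $T$, hence constant. This also sidesteps having to argue separately that fibres of $\pi$ are exactly $\IZ$-orbits of $\tau$: you only need it at one point, and composing with $r$ detects the integer directly. The rest of your inductions for (a), (b) and the multiplicativity of the degree are fine as written.
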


Next, as we have said, we want to describe how periodic points and
periodic orbits of $f$ are seen at the lifting level.

Let $F$ be any lifting of $f$. A point $x \in T$ is called
\emph{periodic \modi} if there exists $n \in \IN$ such that $F^n(x)
\in x+\IZ$. The \emph{period \modi} of $x$ is the least positive
integer $n$ satisfying this property; that is, $F^n(x) \in x+\IZ$ and
$F^i(x) \notin x+\IZ$ for all $1\leq i\leq n-1$. Observe that $x$ is
periodic {\modi} for $F$ if and only if $\pi(x)$ is periodic for $f$.
Moreover, the $F$-period {\modi} of $x$ and the $f$-period of $\pi(x)$
coincide.

In a similar way, the set
\[
\set{F^{n}(x)+m}{ n \ge 0 \text{ and } m\in \IZ},
\]
will be called the \emph{orbit {\modi} of $x$}, and denoted by
$\Orb_1(x,F)$. Clearly,
\[
\Orb_1(x,F) = \pi^{-1}(\set{f^{n}(\pi(x))}{n \ge 0})
            = \pi^{-1}(\Orb(\pi(x),f)).
\]

When $x$ is periodic {\modi} then the orbit {\modi} of $x$,
$\Orb_1(x,F)$, is also called \emph{periodic \modi}. In this case it
is not difficult to see that $\Card\left(\Orb_1(x,F) \cap
r^{-1}([n,n+1))\right)$ coincides with the $f$-period of $x$ for all
$n \in \IZ$.

A standard approach to study the periodic points and orbits of $f$ is
to work at the lifting level with the periodic {\modi} points and
orbits instead of the original map and space. This is the
approach we will follow in this paper. The results on $F$ can
obviously be pulled back to $f$ and $X$.

As it has been said in the introduction, the aim of this paper is to
develop the rotation theory for liftings in lifted spaces and study
the relation between rotation numbers and periodic {\modi} orbits. As
it is usual, this theory can only be developed for maps of degree one,
that is, for maps verifying $F(x+1)=F(x)+1$ for all $x \in T$. So, in
the rest of the paper, we will only consider the class $\Li$ of all
continuous maps of degree 1 from $T \in \InfX$ into itself.

The following lemma is a specialisation of
Lemma~\ref{lem:liftings-general} to maps of $\Li$. Its last statement
follows from the previous one and Lemma~\ref{lem:const-r}(b).

\begin{lemm}\label{lem:degree1-Fn}
The following statements hold for $T\in\InfX$, $F\in\Li$, $n\in\IN$,
$k\in\IZ$ and $x\in T$:
\begin{enumerate}
\item $F^n(x+k)=F^n(x)+k$,
\item $(F+k)^n(x)=F^n(x)+kn$.
\item If $G$ is another map from $\Li$ then $F \circ G \in \Li$.
In particular $F^n \in \Li$.
\item The map $\map{r \circ F^n}{T}[\IR]$ is continuous and verifies
\[
r(F^n(x+1)) = r(F^n(x)) + 1
\]
for all $x \in T$.
\end{enumerate}
\end{lemm}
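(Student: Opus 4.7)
The plan is to derive each item as a direct specialisation of Lemma~\ref{lem:liftings-general} combined with Lemma~\ref{lem:const-r}, since once $\deg(F)=1$ the formulae in Lemma~\ref{lem:liftings-general} collapse into much simpler expressions. Nothing genuinely new has to be proved; the work is just tracking how $d=1$ simplifies things and observing that degree is multiplicative under composition.

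First I would handle (a) and (b). Substituting $\deg(f)=1$ into Lemma~\ref{lem:liftings-general}(a) gives $F^n(x+k) = F^n(x) + k\cdot 1^n = F^n(x) + k$, which is exactly (a). Similarly, substituting $d=1$ into Lemma~\ref{lem:liftings-general}(b) turns the geometric sum $1+d+\dots+d^{n-1}$ into $n$, yielding $(F+k)^n(x) = F^n(x) + kn$, which is (b). A small caveat: Lemma~\ref{lem:liftings-general} is stated for $x\in\IR$, but the equality $F(x+1)=F(x)+1$ characterising degree one maps in $\Li$ holds for \emph{all} $x\in T$ by definition of $\Li$, so a trivial induction on $n$ (and another on $k\ge 0$, then extending to negative $k$ using that $\tau$ is a homeomorphism) gives (a) for all $x\in T$; (b) then follows from (a) by induction on $n$, writing $(F+k)^{n+1}(x) = (F+k)(F^n(x)+kn) = F(F^n(x)+kn)+k = F^{n+1}(x)+kn+k$.

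Next, for (c), I use the final statement of Lemma~\ref{lem:liftings-general}: $\deg(F\circ G) = \deg(F)\cdot\deg(G) = 1\cdot 1 = 1$, and the composition of continuous maps is continuous, so $F\circ G\in\Li$. An immediate induction on $n$ gives $F^n\in\Li$ for every $n\in\IN$.

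Finally, for (d), $F^n$ is continuous by (c) and $r$ is continuous by Lemma~\ref{lem:const-r}(b), so their composition is continuous. For the translation identity, using (a) with $k=1$ and then Lemma~\ref{lem:const-r}(b):
\[
r(F^n(x+1)) \;=\; r(F^n(x)+1) \;=\; r(F^n(x))+1.
\]
There is no real obstacle here; the only point requiring a moment of care is making sure that the identity $F(x+1)=F(x)+1$ is used in the ambient space $T$ rather than only on $\IR$, so that (a) extends to every $x\in T$ as stated.
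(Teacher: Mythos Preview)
Your proposal is correct and follows exactly the approach the paper indicates: it states that the lemma is a specialisation of Lemma~\ref{lem:liftings-general} to maps of $\Li$, with the last statement following from the previous one and Lemma~\ref{lem:const-r}(b). Your added care in noting that Lemma~\ref{lem:liftings-general}(a,b) is stated only for $x\in\IR$ while the present lemma requires $x\in T$, and then supplying the easy induction from the defining identity $F(x+1)=F(x)+1$ on all of $T$, is a worthwhile clarification that the paper glosses over.
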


\subsection{Maps of degree \boldmath $1$ and rotation
numbers}\label{subsec:Fn}
The aim of this subsection is to introduce the notion of rotation
number for our setting and to study its basic properties. We define
three types of rotation numbers.

\begin{defi}
Let $T\in\InfX$, $F\in \Li$ and $x\in T$. We set
\[
\urhos(x) :=  \liminf_{n\to+\infty} \frac{r\circ F^n (x)-r(x)}{n}
              \ \text{and}\
\orhos(x) := \limsup_{n\to+\infty} \frac{r\circ F^n (x)-r(x)}{n}.
\]
When $\urhos(x) = \orhos(x)$ then this number will be denoted by
$\rhos{F}(x)$ and called the \emph{rotation number of $x$}. The
numbers $\urhos(x)$ and $\orhos(x)$ are called the \emph{lower
rotation number of $x$} and \emph{upper rotation number of $x$},
respectively.
\end{defi}

\begin{rema}
If $T$ is embedded in a normed vector field (e.g. $T \subset
\IR^n$), then one can easily see that the composition with the
retraction $r$ can be removed from the above formula without any
change and the rotation numbers can be defined simply by using
\[
\frac{F^n(x)-x}{n}.
\]
The only reason to consider $r\circ F^n$ instead of $F^n$ in the
general case is to ``project'' the point $F^n(x)$ to $\IR$ where we
have arithmetic, to be able to measure the distance between $F^n(x)$
and $x$.
\end{rema}

We now give some elementary properties of rotation numbers.

\begin{lemm}\label{lem:first-properties}
Let $T\in\InfX$, $F\in\Li$, $x\in T$, $k\in\IZ$ and $n\in\IN$.
\begin{enumerate}
\item $\orhos(x+k)=\orhos(x)$.
\item $\orhos[(F+k)](x)=\orhos(x)+k$.
\item $\orhos[F^n](x)=n\orhos(x)$.
\end{enumerate}
The same statements hold with $\underline{\rho}$ instead of
$\overline{\rho}$.
\end{lemm}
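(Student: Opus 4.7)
The plan is to verify each identity by substituting into the definition of $\orhos$, simplifying the numerator $r\circ F^n(x)-r(x)$ via the structural lemmas already proved, and then taking $\limsup$. Parts (a) and (b) reduce to pure algebra using Lemma~\ref{lem:degree1-Fn}(a)(b) together with the translation identity $r(y+k)=r(y)+k$ (obtained by iterating Lemma~\ref{lem:const-r}(b)). Part (c) is the only one requiring real input, and will be the main obstacle.

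For (a), using Lemma~\ref{lem:degree1-Fn}(a) and the translation identity for $r$,
\[
r(F^n(x+k)) - r(x+k) = r(F^n(x)+k) - (r(x)+k) = r(F^n(x)) - r(x),
\]
so the numerator is unchanged and the $\limsup$ is the same. For (b), Lemma~\ref{lem:degree1-Fn}(b) gives $(F+k)^n(x) = F^n(x) + kn$, hence
\[
\frac{r((F+k)^n(x)) - r(x)}{n} = \frac{r(F^n(x)) - r(x)}{n} + k,
\]
and taking $\limsup$ yields $\orhos(x) + k$.

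For (c), set $\phi_n(x) := r(F^n(x)) - r(x)$. One direction is immediate:
\[
\orhos[F^n](x) = \limsup_{m\to\infty} \frac{\phi_{nm}(x)}{m} = n\,\limsup_{m\to\infty} \frac{\phi_{nm}(x)}{nm} \leq n\,\orhos(x),
\]
because $(nm)_m$ is a subsequence of $(k)_k$. For the reverse inequality, choose a sequence $k_i\to\infty$ realising $\orhos(x)$, write $k_i = nq_i + j_i$ with $0 \le j_i < n$, and extract a subsequence so that $j_i$ equals a constant $j$. Then
\[
\phi_{k_i}(x) = \phi_j(F^{nq_i}(x)) + \phi_{nq_i}(x).
\]
The crux is to show that $\phi_j$ is bounded on $T$. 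By Lemma~\ref{lem:degree1-Fn}(d), $\phi_j(y+1) = \phi_j(y)$, so $\phi_j$ is determined by its values on the fundamental domain $D := r^{-1}([0,1])$; and $D$ is compact because, by Definition~\ref{def:liftedspace}(ii)--(iii), it is the union of $[0,1]$ with finitely many closures of connected components of $T\setminus\IR$, each compact. Continuity of $\phi_j$ on $D$ gives a uniform bound $M_j$. Consequently $\phi_{k_i}(x)/k_i$ and $\phi_{nq_i}(x)/(nq_i)$ differ by an $o(1)$ term (using $nq_i/k_i \to 1$), so
\[
\orhos(x) = \lim_i \frac{\phi_{k_i}(x)}{k_i} \leq \limsup_i \frac{\phi_{nq_i}(x)}{nq_i} \leq \frac{1}{n}\,\orhos[F^n](x),
\]
yielding the reverse inequality. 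Replacing $\limsup$ by $\liminf$ throughout (and choosing the realising subsequence for $\urhos$) gives the underline versions verbatim.
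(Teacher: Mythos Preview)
Your proof is correct and follows essentially the same approach as the paper. Parts (a) and (b) are handled identically via Lemma~\ref{lem:degree1-Fn}(a,b); for part (c) the paper simply cites \cite[Lemma~3.7.1(b)]{ALM}, and the argument you have written out---decomposing $k_i = nq_i + j$ and using the $\tau$-periodicity and compactness of $r^{-1}([0,1])$ to bound the remainder term $\phi_j$---is exactly the standard proof that this citation points to.
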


\begin{proof}
The Statements~(a) and (b) follow from Lemma~\ref{lem:degree1-Fn}(a)
and (b) respectively. The proof of (c) is similar to
\cite[Lemma~3.7.1(b)]{ALM}.
\end{proof}

\medskip
An important object that synthesises all the information about
rotation numbers is the \emph{rotation set} (i.e., the set of
\emph{all} rotation numbers). Since we have three types of rotation
numbers, we have three kinds of rotation sets.
\begin{defi}
For $T \in \InfX$ and $F \in \Li$ we define the following
\emph{rotation sets}:
\begin{align*}
\Rot^+(F) &= \set{\orhos(x)}{x\in T},\\
\Rot^-(F) &= \set{\urhos(x)}{x\in T},\\
\Rot(F)   &= \set{\rhos{F}(x)}{x\in T
                  \text{ and $\rhos{F}(x)$ exists}},
\end{align*}
Similarly we define $\RotR^+(F)$, $\RotR^-(F)$ and $\RotR(F)$
by replacing $x\in T$ by $x\in \IR$ in the above three definitions.
\end{defi}

The next simple example helps in better understanding the basic
features of rotation numbers and sets. In particular it will show
that the rotation set in this framework does not display the nice
properties of the rotation sets for continuous degree one circle
maps and will justify the study of the sets $\RotR^+$,
$\RotR^-$ and $\RotR$.

\begin{exem}\label{ex:R-not-connected}
Let $T$ be the lifted space shown  in
Figure~\ref{fig:R-not-connected}. This lifted space has two branches
$A,\ B$ between $0$ and $1$ outside $\IR$, joined at a common
branching point $e$. We denote by $a$ and $b$ the endpoints of $A$ and
$B$, respectively.

Observe that $T$ is uniquely arcwise connected. So, given two points
$x$ and $y$, the \emph{convex hull of $\{x,y\}$ in $T$} which is by
definition the smallest closed connected subset of $T$ containing $x
$ and $y$ coincides with the image of any injective path in $T$
joining $x$ and $y$. It will be denoted by $\chull{x,y}$.

Let $\map{F}{T}$ be the continuous map of degree $1$ defined by
\begin{enumerate}[(i)]
\item $F|_{\IR}=\Id$,
\item $F(A)=\chull{e,a-1}$ and $F|_A$ is injective,
\item $F(B)=\chull{e,b+1}$ and $F|_B$ is injective.
\end{enumerate}

\begin{figure}[htb]
\centerline{\includegraphics{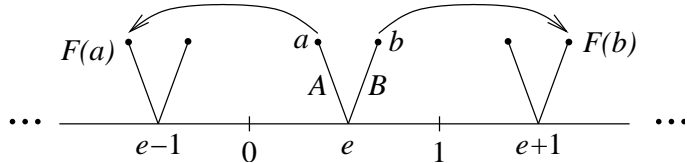}}
\caption{The set $\Rot(F)=\{-1,0,1\}$ is not connected and is not
equal to $\RotR(F)=\{0\}$.\label{fig:R-not-connected}}
\end{figure}

Obviously, $\RotR(F)=\{0\}$, $\rhos{F}(a)=-1$ and $\rhos{F}(b)=1$. Let
$x\in A$. If there exists $k\geq 1$ such that $F^k(x)\in\IR$, then
$F^n(x)=F^k(x)$ for all $n\geq k$ and $\rhos{F}(x)=0$. Otherwise,
$F^n(x)\in A-n$ for all $n\geq 1$, and $\rhos{F}(x)=-1$. Similarly, if
$x\in B$ then $\rhos{F}(x)$ equals $0$ or $1$.  Hence
$\Rot(F)=\{-1,0,1\}$, which is not a connected set. Consequently,
$\Rot(F) \neq \RotR(F)$ despite of the fact that the set
$\bigcup_{n\geq 0}F^{-n}(\IR)$ coincides with
$T\setminus(\{a,b\}+\IZ)$, which is dense in $T$.

In a similar way one can construct examples of lifted spaces and maps
$F$ such that $\Rot(F)$ has $n$ connected components for any finite,
arbitrarily large $n$, even when there is a single branch outside
$\IR$. Or connected components outside $\RotR(F)$ which are non
degenerate intervals (e.g., $F|_{\IR}=\Id$ and $F(A)\supset (A+1)\cup
(A+2)$ in the above example). Generally, when the dynamics of parts of
the branches has no relation with the dynamics of $\IR$,
disconnectedness of the rotation set is likely to occur.
\end{exem}

To study the sets $\Rot(F)$ and $\RotR(F)$ and their relation with the
periodic {\modi} points and orbits of the map $F$ we introduce the
following notation. For a continuous map $F \in \Li$ and $n \in \IN$
we set
\[
\Fncr := \map{r \circ F^n\evalat{\IR}}{\IR}.
\]
From Lemma~\ref{lem:degree1-Fn}(d) it follows that the map $\Fncr$ is
a lifting of a circle map of degree $1$, and thus the results on
rotation sets for circle maps apply to it straightforwardly.

We also generalise the notion of a \emph{twist orbit} from the context
of degree one circle maps to this setting.

\begin{defi}
Let $T \in \InfX$ and $F \in \Li$. An orbit {\modi} $P \subset \IR$ of
$F$ will be called \emph{twist} if $F\evalat{P}$ is strictly
increasing.
\end{defi}

\begin{rema}\label{rem:rotorbits}
The following statements are easy to check.
\begin{enumerate}[(i)]
\item Two points in the same orbit {\modi} have the same rotation
number.

\item If $F^q(x)=x+p$ with $q\in\IN$ and $p\in\IZ$, then
$\rhos{F}(x)=p/q$. Therefore all periodic {\modi} points have rational
rotation numbers.

\item Let $x$ be a periodic {\modi} point of period $q$ and $p\in \IZ$
such that $F^q(x)=x+p$. If $\Orb_1(x,F)$ is a twist orbit, then it
follows from \cite[Corollary 3.7.6]{ALM} that $(p,q)=1$.
\end{enumerate}
\end{rema}

The following theorem describes the relation between the
sets $\Rot(F^n)$ and $\Rot(\Fncr)$.

\begin{theo}\label{theo:Fn}
Let $F \in \Li$ and let $n\geq 1$. Assume that $x \in \IR$ is such
that $\Orb_1(x,F^n) \subset \IR$. Then, $\rhos{\Fncr}(x) =
\rhos{F^n}(x)  = n\rhos{F}(x)$. Conversely, for each $\alpha \in
\Rot(\Fncr)$ there exists $x \in \IR$ such that $\alpha=
\rhos{\Fncr}(x) = \rhos{F^n}(x)  = n\rhos{F}(x)$, $\Orb_1(x,F^n) =
\Orb_1(x,\Fncr) \subset \IR$ and $\Orb_1(x,F^n)$  is twist. Moreover,
if $\alpha \in \IQ$ then $x$ can be chosen to be periodic {\modi} of
$F.$ In particular, for each $n \in \IN$, $\tfrac{1}{n} \Rot(\Fncr)
\subset \RotR(F)$.
\end{theo}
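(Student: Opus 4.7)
The plan is to handle the two implications separately, starting with the direct one.

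For the first assertion, assume $\Orb_1(x, F^n) \subset \IR$. Then $F^{kn}(x) \in \IR$ for every $k \geq 0$, so the retraction $r$ acts as the identity on these iterates. An easy induction on $k$ then yields $\Fncr^k(x) = F^{kn}(x)$, and plugging this into the definitions of the upper and lower rotation numbers gives $\rhos{\Fncr}(x) = \rhos{F^n}(x)$ immediately. The remaining equality $\rhos{F^n}(x) = n\rhos{F}(x)$ is exactly Lemma~\ref{lem:first-properties}(c).

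For the converse, the crucial observation is that, by Lemma~\ref{lem:degree1-Fn}(d), $\Fncr$ is itself a lifting of a continuous degree-$1$ self-map of the circle. Consequently the classical rotation theory for degree-$1$ circle maps (see for instance \cite{ALM}) applies to $\Fncr$ on $\IR$. From it I would extract, for each $\alpha \in \Rot(\Fncr)$, a point $y \in \IR$ whose $\Fncr$-orbit realises rotation number $\alpha$ and is twist; moreover, when $\alpha = p/q$ is rational in lowest terms, this orbit may be chosen to be periodic with $\Fncr^q(y) = y + p$, hence periodic {\modi} of period $q$ for $\Fncr$.

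The main obstacle is upgrading a twist $\Fncr$-orbit inside $\IR$ to a genuine $F^n$-orbit that also stays inside $\IR$. The issue is that $\Fncr = r \circ F^n\evalat{\IR}$, so $F^n$ may push a point of the $\Fncr$-orbit into a branch of $T$ outside $\IR$; once one leaves $\IR$, further iterates of $F^n$ are no longer controlled by $\Fncr$. To handle this I would invoke the positive covering tool developed in Section~\ref{sec:pos-cov}. A twist $\Fncr$-orbit determines a combinatorial covering pattern of $\Fncr$ on the intervals between consecutive orbit points modulo $1$, and each of these covering relations in $\IR$ can be re-read at the level of $F^n$ acting on $T$. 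The positive covering result should then yield an actual $F^n$-orbit realising the same combinatorial pattern inside $\IR$. By the first paragraph this $F^n$-orbit coincides with its $\Fncr$-orbit, has rotation number $\alpha/n$ under $F$ and is twist; in the rational case the resulting point is genuinely periodic {\modi} for $F$.

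Finally, the ``In particular'' is immediate: for any $\alpha \in \Rot(\Fncr)$ the point $x$ produced above witnesses $\alpha/n \in \RotR(F)$, giving $\tfrac{1}{n}\Rot(\Fncr) \subset \RotR(F)$.
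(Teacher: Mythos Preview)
Your direct implication is fine and matches the paper. For the converse you overlook the key shortcut and take a detour that does not close in the irrational case.

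The paper's argument is this: the circle-map result you invoke (\cite[Theorem~3.7.20]{ALM}) actually delivers more than a twist orbit with the right rotation number --- it produces one whose entire $\Fncr$-orbit lies in $\IR \setminus \Const(\Fncr)$. Lemma~\ref{lem:Const} then says that outside $\Const(\Fncr)$ one has $\Fncr(x) = F^n(x)$. Hence on such an orbit $\Fncr$ and $F^n$ coincide pointwise, the $\Fncr$-orbit \emph{is} already the $F^n$-orbit, and there is nothing to ``upgrade''. This handles rational and irrational $\alpha$ uniformly in one line.

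Your positive-covering route could be pushed through when $\alpha$ is rational: the twist periodic $\Fncr$-orbit yields a finite loop of positive coverings, and Proposition~\ref{prop:+cover-periodic} then returns a genuine $F$-periodic (mod~1) point whose $F^n$-iterates sit in $\IR$ (though you still owe a check that the resulting orbit is twist --- one point per interval, inheriting the cyclic order). For irrational $\alpha$, however, there is a genuine gap: the twist orbit is infinite, there is no finite loop to feed into Proposition~\ref{prop:+cover-periodic}, and a nested-intersection argument over longer and longer finite approximations is not straightforward to set up, since the intervals between consecutive orbit points change as more points are added, so the $\Fol$ sets are not obviously nested. Finally, note that Section~\ref{sec:pos-cov} is developed \emph{after} Theorem~\ref{theo:Fn}, so your argument would also forward-reference.
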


To prove Theorem~\ref{theo:Fn} we introduce the notion of $\Const(F)$
and study its basic properties.

Given a continuous map $\map{g}{X}[Y]$, we will denote by $\Const(g)$
the set of points $x \in X$ such that $g$ is constant in a
neighbourhood of $x$. Clearly, $\Const(g)$ is open and $g|_{\Clos{C}}$
is constant for each connected component $C$ of $\Const(g)$.

\begin{lemm}\label{lem:Const}
Let $T\in\InfX$ and let  $F \in \Li$. If $x\notin \Const(r\circ F)$
then $r\circ F(x) = F(x)$. Consequently, for each $n \in \IN$, $x
\notin \Const(\Fncr)$ implies $\Fncr(x) = F^n(x)$.
\end{lemm}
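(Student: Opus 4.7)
The plan is to prove the contrapositive: if $r\circ F(x) \neq F(x)$, then $x \in \Const(r\circ F)$. Note that $r\circ F(x) \neq F(x)$ forces $F(x) \notin \IR$, since $r$ fixes $\IR$ pointwise. This is the only place where the structure of lifted spaces enters substantively, so I would aim to exploit it through Lemma~\ref{lem:const-r}(a).

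First I would invoke Lemma~\ref{lem:const-r}(a) applied to the point $F(x) \notin \IR$: there exists an open neighbourhood $V$ of $F(x)$ in $T$ on which $r$ is constant. Then, by continuity of $F$, the set $U = F^{-1}(V)$ is an open neighbourhood of $x$ in $T$, and $r\circ F$ is constant on $U$. This exactly says $x \in \Const(r\circ F)$, yielding the contrapositive and hence the first statement.

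For the second statement I would apply the first to $F^n$ in place of $F$, which is legitimate since $F^n \in \Li$ by Lemma~\ref{lem:degree1-Fn}(c). Here there is one small care point: the definition of $\Fncr$ restricts to $\IR$, while the first part of the lemma is about $\Const(r\circ F^n)$ as a subset of $T$. The remedy is to observe that if $x \in \IR$ and $F^n(x) \notin \IR$, then the open neighbourhood $U = (F^n)^{-1}(V)$ produced above intersects $\IR$ in an open neighbourhood of $x$ in $\IR$ on which $\Fncr$ is constant, so $x \in \Const(\Fncr)$. Taking the contrapositive again gives $\Fncr(x) = r(F^n(x)) = F^n(x)$.

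There is no real obstacle here; the argument is a one-line application of continuity of $F^n$ combined with the local constancy of $r$ off $\IR$. The only subtlety worth flagging in the write-up is the mild discrepancy between $\Const$ computed on $T$ and on $\IR$, which is harmless because a neighbourhood in $T$ of a point of $\IR$ automatically yields a neighbourhood in $\IR$ by intersection.
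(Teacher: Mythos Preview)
Your proposal is correct and follows essentially the same route as the paper: prove the contrapositive by noting $F(x)\notin\IR$, invoke Lemma~\ref{lem:const-r}(a) to get local constancy of $r$ near $F(x)$, and pull back by continuity of $F$. Your treatment of the second statement is in fact slightly more careful than the paper's (which just says it ``follows trivially''), since you explicitly address the passage from $\Const(r\circ F^n)$ in $T$ to $\Const(\Fncr)$ in $\IR$; this is harmless as you note, but worth the sentence you give it.
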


\begin{proof}
Suppose that $r\circ F(x) \neq F(x)$. Then, $F(x) \notin \IR$. By
the
continuity of $F$ and Lemma~\ref{lem:const-r}(a), there exists an open
neighbourhood $U$ of $x$ in $T$ such that $r(F(U)) = r(F(x))$. This
shows that $x \in \Const(r\circ F)$, which is a contradiction. The
second statement of the lemma follows trivially from the first one.
\end{proof}

\medskip
Now we are ready to prove Theorem~\ref{theo:Fn}.

\medskip
\noindent{\it Proof of Theorem~\ref{theo:Fn}.}
The first statement of the theorem follows from
Lemma~\ref{lem:first-properties}(c).
If $\alpha \in \Rot(\Fncr)$, then by \cite[Theorem~3.7.20]{ALM} there
exists a  point $x \in \IR$ such that   $\rhos{\Fncr}(x) = \alpha$,
$\Orb_1(x,\Fncr) \subset \IR \setminus \Const(\Fncr)$ and
$\Orb_1(x,\Fncr)$ is twist. Moreover, if  $\alpha \in \IQ$ then $x$
can be chosen to be a periodic {\modi}  point of $\Fncr$. Then the
theorem follows from Lemma~\ref{lem:Const}.
{\hfill$\Box$}

\medskip
From Theorem~\ref{theo:Fn} we can derive the following consequences.

\begin{coro}\label{cory:incl}
Let $F \in \Li$ and let $n \in \IN$. Then, $\Rot(\Fcr) \subset
\tfrac1n \Rot(\Fncr)$. Moreover, for each $n \in \IN$, $\Rot(\Fncr)$
is a nonempty compact interval. Consequently, the set $\bigcup_{n\geq
1} \tfrac{1}{n} \Rot(\Fncr)$ is a nonempty interval contained in
$\RotR(F)$.
\end{coro}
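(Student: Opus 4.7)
The plan is to deduce everything from Theorem~\ref{theo:Fn} together with classical rotation theory for degree one circle map liftings, applied to $\Fncr$ (which is a lifting of such a map by Lemma~\ref{lem:degree1-Fn}(d)).

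For the inclusion $\Rot(\Fcr) \subset \tfrac1n \Rot(\Fncr)$, I would take $\alpha \in \Rot(\Fcr)$ and apply the converse direction of Theorem~\ref{theo:Fn} with $n=1$ to produce a point $x \in \IR$ with $\rhos{F}(x) = \alpha$ and $\Orb_1(x,F) \subset \IR$. Since $\Orb_1(x,F^n) \subset \Orb_1(x,F) \subset \IR$, the hypothesis of Theorem~\ref{theo:Fn} applied to the exponent $n$ is met at the same point $x$, and it yields $\rhos{\Fncr}(x) = n\rhos{F}(x) = n\alpha$. Thus $n\alpha \in \Rot(\Fncr)$, i.e.\ $\alpha \in \tfrac1n\Rot(\Fncr)$.

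Next, by Lemma~\ref{lem:degree1-Fn}(d), $\Fncr$ is a continuous lifting of a degree one circle map, so the classical result (see for instance \cite[Theorem~3.7.20]{ALM}) applies: $\Rot(\Fncr)$ is a nonempty compact interval. To combine the intervals $\tfrac1n \Rot(\Fncr)$ into an interval, I would verify that the family is directed under inclusion. Indeed, applying the already established inclusion $\Rot(G^1_r) \subset \tfrac1k \Rot(G^k_r)$ to $G = F^m$ with $k=n$ gives
\[
\tfrac{1}{m}\Rot(\Fcrbas{F}{m}) \subset \tfrac{1}{mn}\Rot(\Fcrbas{F}{mn}),
\qquad
\tfrac{1}{n}\Rot(\Fncr) \subset \tfrac{1}{mn}\Rot(\Fcrbas{F}{mn}),
\]
so any two members of the family are both contained in a common member. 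A directed union of (compact) intervals is again an interval, hence $\bigcup_{n\geq 1} \tfrac1n \Rot(\Fncr)$ is an interval; it is nonempty because each $\Rot(\Fncr)$ is.

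Finally, the containment $\bigcup_{n \geq 1}\tfrac1n\Rot(\Fncr) \subset \RotR(F)$ is exactly the last assertion of Theorem~\ref{theo:Fn}, which produces, for any $\alpha \in \Rot(\Fncr)$, a point $x \in \IR$ with $\rhos{F}(x) = \alpha/n$. The only mildly delicate step is the first one: one must resist the temptation to iterate $\Fcr$ (since $\Fcr^n \neq \Fncr$ in general because retractions and iterations of $F$ do not commute) and instead lift the chosen $\alpha$-rotating point in $\IR$ back to the level of $F$, exploiting the fact that Theorem~\ref{theo:Fn} delivers an orbit that stays inside $\IR$, where the retraction $r$ acts as the identity.
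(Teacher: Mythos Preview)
Your proof is correct and follows essentially the same route as the paper: both apply Theorem~\ref{theo:Fn} with $n=1$ to get a point $x\in\IR$ whose $F$-orbit stays in $\IR$, then apply the forward direction of the same theorem at level $n$ to conclude $n\alpha\in\Rot(\Fncr)$, and both invoke \cite[Theorem~3.7.20]{ALM} for the compact-interval property. The only cosmetic difference is that you justify the union being an interval via a directed-family argument, whereas the paper leaves it implicit (since every $\tfrac1n\Rot(\Fncr)$ contains the common nonempty interval $\Rot(\Fcr)$, the intervals all overlap and their union is automatically an interval).
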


\begin{proof}
For each $\alpha \in \Rot(\Fcr)$ let $x$ be the point given by the
second statement of Theorem~\ref{theo:Fn} for $\Fcr$. In particular,
$\rhos{F}(x) = \alpha$ and $\Orb_1(x,F) = \Orb_1(x,\Fcr) \subset \IR$.
Consequently, for all $n \in \IN$, $\Orb_1(x,F^n) \subset \IR$ and, by
the first statement of Theorem~\ref{theo:Fn},
\[
\rhos{\Fncr}(x) = \rhos{F^n}(x)  = n\rhos{F}(x) = n\alpha.
\]
This ends the proof of the first statement of the corollary. The fact
that, for each $n \in \IN$, $\Rot(\Fncr)$ is a nonempty compact
interval follows for instance from \cite[Theorem~3.7.20]{ALM}. Then,
the last statement of the corollary follows immediately.
\end{proof}

\begin{rema}
In general, the interval $\bigcup_{n\geq 1}\tfrac{1}{n}\Rot(\Fncr)$
need not be closed: see Example~\ref{ex:gaps-in-Per(p/q)}.
\end{rema}

\section{Positive covering}\label{sec:pos-cov}

To find periodic points in one-dimensional spaces, the notion of
\emph{covering} (introduced in \cite{Bloc3}) is often used. If $I,J$
are two compact intervals, $I$ $F$-covers $J$ if there exists a
subinterval $I_0\subset I$ such that $F(I_0)=J$. It is well known that
if $I$ $F$-covers $I$ then there exists a point $x\in I$ such that
$F(x)=x$. If $I$ $F$-covers $I$ then $F(I)\supset I$ but the latter
condition does not ensure the existence of a fixed point (see e.g.
\cite{ZMGG}).

In this section we are going to introduce a variant of the notion of
covering, that we call \emph{positive covering}. Roughly speaking, $I$
positively $F$-covers $J$ if $F(I)\supset J$ and this inclusion is
``globally increasing''. Positive covering does not imply covering but
we will see that if $I$ positively $F$-covers $I$ then $F$ has a fixed
point in $I$ (Proposition~\ref{prop:+cover-periodic}). This will be a
main tool in the rest of the paper.

\begin{defi}
Let $T\in\InfX$, let $\map{F}{T}$ be a continuous map and let $I,\ J$
be two compact subintervals of $\IR$. We say that $I$ \emph{positively
$F$-covers} $J$ and we write $I\pluscover{F} J$ if there exist $x,y
\in I$ such that $ x \le y$, $r\circ F(x) \le \min J$ and $r\circ
F(y)\geq \max J$. We remark that $I$ positively $F$-covers $J$ if and
only if $I$ positively $r \circ F$-covers $J$. So, we will
indistinctly write $I\pluscover{F} J$ or $I\pluscover{r \circ F} J$.
\end{defi}

In the next lemma we state some basic properties of positive covering.
We say that an interval $I\subset\IR$ is \emph{non degenerate} if it
is neither empty nor reduced to a point.

\begin{lemm}\label{lem:+cover-basic}
Let $T\in\InfX$, let $\map{F, G}{T}$ be two continuous maps and let
$I,\ J$ and $K$ be three compact non degenerate subintervals of $\IR$.
\begin{enumerate}
\item Suppose that $I\pluscover{F} J$. If  $K\subset J$ then
$I\pluscover{F} K$. If $K\supset I$ then $K\pluscover{F} J$.

\item If $I\pluscover{F} J$ and $a,b\in J$ with $a< b$, then there
exist $x_0,y_0 \in I$ such that $x_0\leq y_0$, $F(x_0)=a$, $F(y_0)=b$
and $r\circ F(t)\in (a,b)$ for all $t\in (x_0,y_0)$.

\item If $I\pluscover{F}J$ and $J\pluscover{G}K$ then
$I\pluscover{G\circ F} K$.

\item Suppose that $F$ is of degree $1$. If $I\pluscover{F} J$ then
$(I+n)\pluscover{F-k}(J+n-k)$ for all $n,k\in\IZ$.
\end{enumerate}
\end{lemm}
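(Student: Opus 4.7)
The plan is to treat the four parts in order of technical weight. Parts (a) and (d) are direct unpackings of the definition, part (b) is the technical core, and part (c) is a clean consequence of (b). For (a), let $x \le y$ in $I$ witness $I \pluscover{F} J$. If $K \subset J$ then $\min J \le \min K$ and $\max K \le \max J$, so the defining inequalities for $I \pluscover{F} J$ automatically upgrade to those for $I \pluscover{F} K$; if $K \supset I$, the same witnesses lie in $K$. For (d), the degree-$1$ hypothesis combined with Lemma~\ref{lem:degree1-Fn}(a) gives $F(z+n) = F(z) + n$, and Lemma~\ref{lem:const-r}(b) gives $r(w+m) = r(w) + m$, hence $r \circ (F-k)(z+n) = r \circ F(z) + n - k$. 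This translates any witnesses of $I \pluscover{F} J$ verbatim to witnesses of $(I+n) \pluscover{F-k} (J+n-k)$.

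Part (b) is the heart. Let $x \le y$ in $I$ witness $I \pluscover{F} J$, so $r \circ F(x) \le \min J \le a < b \le \max J \le r \circ F(y)$. Using continuity of $r \circ F$ (Lemma~\ref{lem:const-r}(b)) and the intermediate value theorem, I define
\[
y_0 := \min\{t \in [x,y] : r \circ F(t) = b\}, \qquad x_0 := \max\{t \in [x,y_0] : r \circ F(t) = a\}.
\]
Standard extremality arguments yield $x_0 < y_0$ and $r \circ F(t) \in (a, b)$ for every $t \in (x_0, y_0)$. The main obstacle is that the conclusion demands $F(x_0) = a$ and $F(y_0) = b$ as equalities in $T$, not only the $r$-values in $\IR$; this is where the non-trivial geometry of a lifted space enters. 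I would handle it via Lemma~\ref{lem:const-r}(a): if $F(y_0) \notin \IR$, then $r$ is constant on a neighbourhood of $F(y_0)$, and by continuity of $F$ there is a neighbourhood $V$ of $y_0$ on which $r \circ F \equiv b$; any $t' \in V$ with $t' < y_0$ then contradicts the minimality of $y_0$. The argument for $x_0$ is symmetric, with $x_0 < y_0$ ensuring that the analogous neighbourhood meets $(x_0, y_0]$ and so contradicts maximality of $x_0$. Equivalently one may invoke Lemma~\ref{lem:Const}, which packages precisely this reasoning.

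Finally, part (c) falls out of (b). Let $u \le v$ in $J$ witness $J \pluscover{G} K$. Non-degeneracy of $K$ forces $u < v$, since $u = v$ would give the contradiction $\max K \le r \circ G(v) = r \circ G(u) \le \min K$. Applying (b) to $I \pluscover{F} J$ with endpoints $a := u$ and $b := v$ produces $x_0 \le y_0$ in $I$ with $F(x_0) = u$ and $F(y_0) = v$, whence
\[
r \circ (G \circ F)(x_0) = r \circ G(u) \le \min K \quad\text{and}\quad r \circ (G \circ F)(y_0) = r \circ G(v) \ge \max K,
\]
which is exactly $I \pluscover{G \circ F} K$. Note that the composition step works cleanly only because (b) delivers the genuine equalities $F(x_0) = u$ and $F(y_0) = v$ in $T$; if one only had the $r$-values, one could not evaluate $G$ on the correct point. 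This is precisely why the extra care in (b) is indispensable.
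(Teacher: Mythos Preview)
Your proof is correct and follows essentially the same approach as the paper's: (a) and (d) by direct unpacking, (b) via extremal choices of $x_0, y_0$ combined with Lemma~\ref{lem:const-r}(a) to upgrade $r\circ F$-values to genuine equalities in $T$, and (c) as a corollary of (b). The only cosmetic difference is that the paper selects $x_0$ first (as a max over $[x_1,y_1]$) and then $y_0$ (as a min over $[x_0,y_1]$), whereas you pick $y_0$ first and then $x_0$; both orderings work for the same reasons.
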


\begin{proof}
Statements (a) and  (d) follow easily from the definitions.

To prove (b), suppose that $I\pluscover{F}J$, that is, there exist
$x_1\leq y_1$ in $I$ such that $r\circ F(x_1)\leq\min J$ and $r\circ
F(y_1)\geq \max J$. Since $r\circ F$ is continuous, $r\circ
F(I)\supset J$. Let $a,b\in J$ with $a<b$ and set $x_0= \max\set{t \in
[x_1,y_1]}{r\circ F(t)=a}$. Then, $x_0<y_1$ because $a< \max J$.
Lemma~\ref{lem:const-r}(a) implies then that $F(x_0)\in\IR$, and thus
$F(x_0)=a$. Similarly, let $y_0=\min\set{t\in [x_0,y_1]}{r\circ
F(t)=b}$. The point $F(y_0)$ is in $\IR$, and thus $F(y_0)=b$. The
choice of $x_0,y_0$ implies that if $t\in (x_0,y_0)$, then $r\circ
F(t)\in (a,b)$. This proves (b).

Now we prove (c). Suppose that $I\pluscover{F}J\pluscover{G}K$. Let
$a,b \in J$ such that $a < b$, $r\circ G(a)\leq\min K$ and $r\circ
G(b)\geq \max K$. According to (b) there exist  $x_0,y_0 \in I$ such
that $x_0\leq y_0$, $F(x_0)=a$ and $F(y_0)=b$.  Then $r\circ G\circ
F(x_0)\leq\min K$ and $r\circ G\circ F(y_0)\geq\max K$; that is,
$I\pluscover{G\circ F}K$. This shows (c).
\end{proof}

\medskip
The next proposition will be a key tool to find periodic {\modi}
points.

\begin{prop}\label{prop:+cover-periodic}
Let $T\in\InfX$, $F\in \Li$ and let $I_0,\ldots, I_{k-1}$ be compact
non degenerate intervals in $\IR$ such that
\[
 I_0\pluscover{(\Fcrbas{F}{n_1})^{q_1} - p_1}
 I_1\pluscover{(\Fcrbas{F}{n_2})^ {q_2} -p_2} \cdots
 I_{k-1}\pluscover{(\Fcrbas{F}{n_k})^{q_k} - p_k}
 I_0,
\]
where the numbers $n_i$ and $q_i$ are positive integers and
$p_i\in\IZ$. For every $i \in \{1,2,\dots, k\}$ set
$m_i := \sum_{j=1}^{i} q_j n_j$
and
$\widehat{p}_i := \sum_{j=1}^{i} p_j.$
Then, there exists $x_0 \in I_0$ such that
$F^{m_k}(x_0) = x_0 + \widehat{p}_k$ and
$F^{m_i}(x_0) \in I_i + \widehat{p}_i$ for all
$i = 1,2, \dots, k-1$.
\end{prop}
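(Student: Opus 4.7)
The plan is to collapse the chain of positive coverings into a single self-covering $I_0 \pluscover{H} I_0$, locate a fixed point of $H$ by an intermediate value argument, and then upgrade this fixed point from the level of iterated retractions to a genuine periodic {\modi} point of $F$.

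By $k-1$ applications of Lemma~\ref{lem:+cover-basic}(c), the hypothesis assembles into $I_0 \pluscover{H} I_0$ with $H := H_k \circ \cdots \circ H_1$, where $H_j := (\Fcrbas{F}{n_j})^{q_j} - p_j$ is a continuous map $\IR \to \IR$. An easy induction using Lemma~\ref{lem:const-r}(b) to commute $r$ with integer translations rewrites $H$ as $G - \widehat{p}_k$, where $G := (\Fcrbas{F}{n_k})^{q_k} \circ \cdots \circ (\Fcrbas{F}{n_1})^{q_1}$. To locate $x_0$, I will iterate Lemma~\ref{lem:+cover-basic}(b) backwards through the chain: applied to $I_{k-1} \pluscover{H_k} I_0$, it gives a compact sub-interval $[u_{k-1}, v_{k-1}] \subset I_{k-1}$ whose endpoints are sent by $H_k$ to $\min I_0$ and $\max I_0$; Lemma~\ref{lem:+cover-basic}(a) then yields $I_{k-2} \pluscover{H_{k-1}} [u_{k-1}, v_{k-1}]$, so a further application of (b) produces $[u_{k-2}, v_{k-2}] \subset I_{k-2}$, and so on. Iterating down to $[u_0, v_0] \subset I_0$, one obtains $H(u_0) = \min I_0$ and $H(v_0) = \max I_0$, and the intermediate value theorem applied to $t \mapsto H(t) - t$ then produces $x_0 \in [u_0, v_0]$ with $H(x_0) = x_0$, i.e.\ $G(x_0) = x_0 + \widehat{p}_k$.

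The main obstacle is the final step: upgrading the identity $G(x_0) = x_0 + \widehat{p}_k$---which only knows about the retracted compositions $r \circ F^{n_j}$---to the desired $F^{m_k}(x_0) = x_0 + \widehat{p}_k$ together with the intermediate inclusions $F^{m_i}(x_0) \in I_i + \widehat{p}_i$, both of which require the $F$-orbit of $x_0$ to actually lie in $\IR$ at the times $m_i$. Modeled on the proof of Lemma~\ref{lem:+cover-basic}(b), I will take $x_0 := \sup\{t \in [u_0, v_0] : H(t) \le t\}$ rather than an arbitrary fixed point. Were any intermediate $F$-iterate of $x_0$ to leave $\IR$, Lemma~\ref{lem:const-r}(a) together with continuity of $F$ would make the corresponding truncated composition locally constant at $x_0$, producing points $t > x_0$ with $H(t) \le t$ and contradicting the supremum choice. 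Lemma~\ref{lem:Const} then strips off the retractions one $\Fcrbas{F}{n_j}$ at a time, identifying each partial composition with $F^{m_i}(x_0)$. To inherit the intermediate inclusions $F^{m_i}(x_0) \in I_i + \widehat{p}_i$, the supremum must be taken over the closed subset of $[u_0, v_0]$ on which every partial composition $H_i \circ \cdots \circ H_1(t)$ lies in the appropriate $I_i$; this interleaving of the supremum choice with the nested backward pullback at every internal $\Fcrbas{F}{n_j}$-step will be the technically delicate part.
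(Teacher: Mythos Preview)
Your approach is essentially the paper's: collapse the chain to a single self-covering of $I_0$, restrict to a subinterval on which all partial compositions land in the correct $I_i$, find a fixed point that avoids $\Const$, and then strip the retractions. The paper packages the last two steps into separate lemmas (Lemma~\ref{lem:fixed-point-non-constant} produces a fixed point $x_0\notin\Const(G_k)$, and Lemma~\ref{lem:composition-Const} shows that $x_0\notin\Const(G_k)$ alone forces $G_i(x_0)=F^{m_i}(x_0)$ for every $i$), whereas you merge them into a direct supremum-plus-constancy argument.

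Two remarks. First, the ``technically delicate'' interleaving you anticipate is not needed. Your backward iteration of Lemma~\ref{lem:+cover-basic}(b) already guarantees, by that lemma's conclusion $r\circ F(t)\in(a,b)$ for $t\in(x_0,y_0)$, that \emph{every} $t\in[u_0,v_0]$ satisfies $H_i\circ\cdots\circ H_1(t)\in[u_i,v_i]\subset I_i$ for all $i$; no further restriction of the supremum set is required. And once $x_0\notin\Const(G)$, Lemma~\ref{lem:composition-Const} (applied to the flattened list of $q_1+\cdots+q_k$ exponents) strips all retractions at once, including the internal ones inside each $(\Fcrbas{F}{n_j})^{q_j}$; no step-by-step interleaving with the pullback is necessary. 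Second, your choice $x_0=\sup\{t\in[u_0,v_0]:H(t)\le t\}$ has a harmless but genuine edge case: if $x_0=v_0=\max I_0$ and $H$ is locally constant there, the constancy lies to the \emph{left} of $x_0$ and gives no $t>x_0$ in $[u_0,v_0]$ to contradict the supremum. The paper's Lemma~\ref{lem:fixed-point-non-constant} avoids this by first shrinking the right endpoint to $b_0=\min\{t:H(t)=\max I_0\}$ (which cannot lie in $\Const(H)$) and then taking the \emph{largest} fixed point in $[u_0,b_0]$.
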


To prove the above proposition we need three technical lemmas.

\begin{lemm}\label{lem:fixed-point-non-constant}
Let $a,b\in\IR$ with $a<b$ and let $\map{g}{[a,b]}[\IR]$ be a
continuous map such that $g(a)\leq a$ and $g(b)\geq b$. Then there
exists $x\in [a,b]$ such that $g(x)=x$ and $x \notin \Const(g)$.
\end{lemm}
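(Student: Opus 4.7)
The plan is to select a specific fixed point via a supremum construction and show it cannot be in $\Const(g)$ by a local argument.

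Concretely, I would set
\[
x_0 := \sup\set{x \in [a,b]}{g(x) < x},
\]
with the convention $\sup\emptyset := a$. The first task is to show $g(x_0) = x_0$. If the defining set is empty, then $g \geq \Id$ on $[a,b]$, which combined with $g(a) \leq a$ forces $g(a) = a$, so $x_0 = a$ is a fixed point. Otherwise, pick a sequence $x_n \to x_0$ from the set, so $g(x_n) < x_n$; continuity gives $g(x_0) \leq x_0$. For the reverse inequality: if $x_0 < b$, every $y > x_0$ in $[a,b]$ satisfies $g(y) \geq y$ by definition of the sup, so letting $y \downarrow x_0$ gives $g(x_0) \geq x_0$. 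If $x_0 = b$, the hypothesis $g(b) \geq b$ gives the reverse inequality directly (and combined with $g(b) \leq b$ from the limit, we also recover $g(b) = b$). In every case $x_0$ is a fixed point.

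Next I would show $x_0 \notin \Const(g)$, arguing by contradiction. Suppose $g$ is constant on a neighborhood $U$ of $x_0$ in $[a,b]$; by continuity that constant value must equal $g(x_0) = x_0$. Split into the two sub-cases mirroring the previous step. If $x_0 < b$, pick any $y \in U$ with $y > x_0$; then $g(y) = x_0 < y$, contradicting the defining property of $x_0$ (that $g \geq \Id$ strictly to the right of $x_0$, since $y$ is not in the set whose sup is $x_0$)—wait, more precisely $g(y) < y$ puts $y$ into the supremum set, contradicting $y > x_0 = \sup$. If $x_0 = b$, handle the edge case: in the "empty set" branch we have $x_0 = a$; take $y \in U$ with $y > a$, then $g(y) = a < y$ contradicts $g \geq \Id$. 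In the "$x_0 = b$" non-empty branch, pick $x_n \in U$ from the approximating sequence with $x_n < b$; then $g(x_n) = b > x_n$, contradicting $g(x_n) < x_n$.

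The main obstacle is bookkeeping the edge cases $x_0 \in \{a,b\}$ cleanly, since the neighborhoods of endpoints are one-sided and we must use the correct boundary hypothesis on each side. The key conceptual point is that $\Const$-fixed points are inherently "wrong-sided" sign changes of $g - \Id$ (they force $g - \Id > 0$ just to the left and $g - \Id < 0$ just to the right), whereas the supremum construction isolates a $\Id$-crossing of the opposite type dictated by the boundary conditions $g(a) - a \leq 0 \leq g(b) - b$; these two behaviors are incompatible, which is what delivers the contradiction.
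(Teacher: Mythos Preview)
Your argument is correct and takes a genuinely different route from the paper. The paper first restricts attention to $[a,b_0]$ where $b_0 = \min\set{x}{g(x)=b}$ (a point automatically outside $\Const(g)$), then takes $x_0$ to be the \emph{largest fixed point} in $[a,b_0]$, and derives a contradiction from $x_0 \in \Const(g)$ by producing a strictly larger fixed point in $[a_0,b_0]$ (where $a_0$ is the right end of the constancy component). You instead take $x_0 = \sup\set{x}{g(x)<x}$, the last \emph{downward crossing} of the diagonal, and argue directly that a constancy plateau at $x_0$ would force an additional downward crossing to its right (or, at the endpoint $b$, would violate the approximating sequence). Your construction is arguably more direct since it avoids the auxiliary point $b_0$ and the two-stage ``restrict, then maximize'' procedure; the paper's version has the minor advantage that the case analysis is cleaner (no separate treatment of $x_0=b$ is needed because $x_0\le b_0$ and $b_0\notin\Const(g)$ absorbs the boundary issue). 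Your conceptual summary---that $\Const$-fixed points force a $+$-to-$-$ sign change of $g-\Id$, while the boundary data and the supremum construction isolate a $-$-to-$+$ crossing---is exactly the right picture. One organizational note: your discussion of the ``empty set branch'' inside the case $x_0=b$ is misplaced, since the empty set forces $x_0=a<b$; that sub-case is already subsumed by your $x_0<b$ argument and can simply be deleted.
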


\begin{proof}
Let $b_0=\min\set{x\in [a,b]}{g(x)=b}$. Observe that $b_0$ cannot
belong to $\Const(g)$, the map $g$ is continuous and $g(a)\leq a < b_0
\leq g(b_0)=b$. Thus there exists $x\in [a,b_0]$ such that $g(x)=x$.
Define $x_0= \max\set{x\in [a,b_0]}{g(x)=x}$. We will show by absurd
that $x_0\notin \Const(g)$.

Suppose that $x_0\in\Const(g)$ and call $J$ the connected component of
$\Const(g)$ containing $x_0$ and $a_0=\sup J$. Then, the interval $J$
is relatively open in $[a,b_0]$ and $b_0\notin\Const(g)$. This implies
that $a_0 \notin J$ and hence, $x_0<a_0$.  Since
$g(a_0)=g(x_0)=x_0<a_0$ and $g(b_0) \ge b_0$, there exists a fixed
point of $g$ in $[a_0,b_0]$ which contradicts the choice of $x_0$.
Consequently, $x_0\notin\Const(g)$.
\end{proof}

\medskip
The next lemma is easy to prove.

\begin{lemm}\label{lem:CompOfConstIsConst}
Let $F,H$ be continuous maps from $\IR$ into itself. Then,
$\Const(F) \subset \Const(H \circ F).$
\end{lemm}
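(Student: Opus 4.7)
The plan is to unwind the definition of $\Const$ directly. If $x\in\Const(F)$, then by definition there is an open neighbourhood $U$ of $x$ in $\IR$ on which $F$ is constant; call this constant value $c \in \IR$, so that $F(y)=c$ for every $y\in U$.

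Next I would simply compose with $H$ on this neighbourhood. For every $y\in U$ we get $(H\circ F)(y)=H(F(y))=H(c)$, which does not depend on $y$. Hence $H\circ F$ is constant (equal to $H(c)$) on the same neighbourhood $U$ of $x$, so $x\in\Const(H\circ F)$.

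Since $x\in\Const(F)$ was arbitrary, this yields the desired inclusion $\Const(F)\subset\Const(H\circ F)$. There is no genuine obstacle here: no continuity of $H$ or $F$ beyond what the statement assumes is even used, since the argument is purely pointwise on the neighbourhood where $F$ is already known to be constant. This is presumably why the authors label the lemma as easy and omit the proof.
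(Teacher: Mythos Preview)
Your argument is correct and is exactly the direct unwinding of the definition one would expect; the paper itself simply declares the lemma ``easy to prove'' and omits the proof.
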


\begin{lemm}\label{lem:composition-Const}
Let $T\in\InfX$, $F\in\Li$, $x_0\in\IR$ and let $n_1, n_2, \dots, n_k$
be positive integers. For every $i \in\{1,2,\dots, k\}$ set
$
G_i := \Fcrbas{F}{n_i} \circ\cdots\circ
       \Fcrbas{F}{n_2}\circ\Fcrbas{F}{n_1}
$
and $m_i := n_1 + n_2 + \dots + n_i$.
Assume that $x_0 \in \IR$ is such that $x_0 \notin \Const(G_k)$. Then,
$G_i(x_0) = F^{m_i}(x_0)$ for all $i = 1,2, \dots, k$.
\end{lemm}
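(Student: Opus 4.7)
The plan is to argue by induction on $i$, proving the slightly strengthened statement that for each $i \in \{0,1,\ldots,k\}$ the point $G_i(x_0)$ lies in $\IR$ and equals $F^{m_i}(x_0)$ (with the conventions $G_0 = \Id$, $m_0 = 0$ that make the base case vacuous). Retaining the ``in $\IR$'' part in the inductive hypothesis is exactly what enables Lemma~\ref{lem:Const} to be applied at the next step, since $\Fcrbas{F}{n_i}$ is only defined on $\IR$.

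For the inductive step, I would assume $G_{i-1}(x_0) = F^{m_{i-1}}(x_0) \in \IR$, call this common value $y$, and use $G_i = \Fcrbas{F}{n_i} \circ G_{i-1}$. The claim then reduces to showing $\Fcrbas{F}{n_i}(y) = F^{n_i}(y)$, for then
\[
G_i(x_0) = \Fcrbas{F}{n_i}(y) = F^{n_i}(y) = F^{n_i}\bigl(F^{m_{i-1}}(x_0)\bigr) = F^{m_i}(x_0) \in \IR.
\]
By Lemma~\ref{lem:Const}, this equality holds as soon as $y \notin \Const(\Fcrbas{F}{n_i})$, so the whole induction comes down to verifying that fact.

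To establish $y \notin \Const(\Fcrbas{F}{n_i})$ I would argue by contradiction. If $\Fcrbas{F}{n_i}$ were constant on some open neighbourhood $V$ of $y$, then continuity of $G_{i-1}$ (a composition of continuous maps from $\IR$ to $\IR$) would furnish an open neighbourhood $U$ of $x_0$ with $G_{i-1}(U) \subset V$, making $G_i = \Fcrbas{F}{n_i} \circ G_{i-1}$ constant on $U$; hence $x_0 \in \Const(G_i)$. Writing $G_k = (\Fcrbas{F}{n_k} \circ \cdots \circ \Fcrbas{F}{n_{i+1}}) \circ G_i$ and iterating Lemma~\ref{lem:CompOfConstIsConst} gives $\Const(G_i) \subset \Const(G_k)$, so $x_0 \in \Const(G_k)$, contradicting the hypothesis.

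There is no real obstacle here; the proof is essentially bookkeeping, with the substance entirely carried by Lemmas~\ref{lem:Const} and~\ref{lem:CompOfConstIsConst}. The only point that needs attention is remembering to strengthen the inductive hypothesis to include $G_{i-1}(x_0) \in \IR$, so that Lemma~\ref{lem:Const} can legitimately be invoked at each stage.
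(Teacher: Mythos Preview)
Your proof is correct and follows essentially the same route as the paper's: both use Lemma~\ref{lem:Const} to reduce to showing $G_{i-1}(x_0)\notin\Const(\Fcrbas{F}{n_i})$, then pull the constancy back to $x_0$ via continuity of $G_{i-1}$ and push it forward to $G_k$ via Lemma~\ref{lem:CompOfConstIsConst}. The paper packages this as a minimal-counterexample argument rather than forward induction, but the content is identical; your remark about carrying ``$G_{i-1}(x_0)\in\IR$'' in the hypothesis is harmless, since $G_{i-1}$ maps $\IR$ to $\IR$ by construction and hence this is automatic once the equality $G_{i-1}(x_0)=F^{m_{i-1}}(x_0)$ holds.
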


\begin{proof}
To prove the lemma assume that on the contrary there exists $i \in
\{1,2, \dots, k\}$ such that $G_i(x_0) \ne F^{m_i}(x_0)$ but $G_j(x_0)
= F^{m_j}(x_0) \in \IR$ for all $j = 1,2, \dots, i-1$. To deal with
the case $i = 1$ we set $m_0 = 0$, $G_0 = \Id$ and, to simplify the
notation, $z = F^{m_{i-1}}(x_0) = G_{i-i}(x_0)$. Then we have
\[
  \Fcrbas{F}{n_i}(z) = \Fcrbas{F}{n_i} \bigl(G_{i-1}(x_0)\bigr) =
  G_i(x_0) \ne F^{m_i}(x_0) = F^{n_i}(z).
\]
Therefore, from Lemma~\ref{lem:Const}, it follows that $z \in
\Const(\Fcrbas{F}{n_i})$. Since $z = G_{i-i}(x_0)$, by the continuity
of $G_{i-i}$ it follows that $x_0 \in \Const(\Fcrbas{F}{n_i} \circ
G_{i-i}) = \Const(G_i)$. When $i < k$ we obtain that $x_0 \in
\Const(G_k)$ by Lemma~\ref{lem:CompOfConstIsConst}. Thus, in all cases
we have shown that $x_0 \in \Const(G_k)$; a contradiction.
\end{proof}

\begin{proof}[Proof of Proposition~\ref{prop:+cover-periodic}]
For every $i \in \{1,2,\dots, k\}$ set
$
G_i:=(\Fcrbas{F}{n_i})^{q_i} \circ\cdots\circ (\Fcrbas{F}{n_1})^{q_1}.
$
Then, in view of Lemma~\ref{lem:+cover-basic}(c,d), we have
$I_0 \pluscover{G_k - \widehat{p}_k} I_0$.
Moreover, applying inductively Lemma~\ref{lem:+cover-basic}(b), we get
that there exist $x,y \in I_0$ such that $x < y$,
$G_i([x,y]) \subset I_i + \widehat{p}_i$ for $i=1,2, \dots, k-1$,
$(G_k - \widehat{p}_k)(x)=\min I_0$ and
$(G_k - \widehat{p}_k)(y)=\max I_0$.
Moreover, by Lemma~\ref{lem:fixed-point-non-constant} applied to the
map
$\map{(G_k - \widehat{p}_k)\evalat{[x,y]}}{[x,y]}[\IR]$
there exists a point $x_0 \in [x,y]$ such that
$G_k(x_0) = x_0 + \widehat{p}_k$ and $x_0 \notin \Const(G_k)$.
By Lemma~\ref{lem:composition-Const}, $G_i(x_0)= F^{m_i} (x_0)$ for
all $i = 1,2, \dots, k$. Therefore, by the definition of $[x,y]$, the
point $F^{m_i}(x_0)$ belongs to $I_i + \widehat{p}_i$ for all $i =
1,2, \dots, k-1$, and
$F^{m_k}(x_0) = G_k(x_0) = x_0 + \widehat{p}_k$.
\end{proof}

\medskip
To be able to use Proposition~\ref{prop:+cover-periodic} in an easy
way we introduce the following notation. Let
\begin{align*}
\CP & \colon
I_0 \pluscover{F^{n_1}-p_1} I_1 \cdots \pluscover{F^{n_k}-p_k} I_k,
\text{ and}\\
\CP' & \colon
I_k \pluscover{F^{m_1}-q_1} J_1 \cdots \pluscover{F^{m_l}-q_l} J_l
\end{align*}
be two sequences of positive coverings. Then we will denote by
$\CP\CP'$ the concatenation of $\CP$ and $\CP'$. That is, $\CP\CP'$
denotes the sequence:
\[
 I_0 \pluscover{F^{n_1}-p_1} I_1 \cdots \pluscover{F^{n_k}-p_k}
 I_k \pluscover{F^{m_1}-q_1} J_1 \cdots \pluscover{F^{m_l}-q_l} J_l.
\]
In the particular case when $\CP$ is a \emph{loop}, that is $I_0 =
I_k$, then we will denote by $\CP^n$ the sequence $\CP$ concatenated
with itself $n-1$ times:
\[
\overbrace{\CP\cdots\CP\cdots\CP}^{n\text{ times}}.
\]

Finally, $\Fol(\CP)$ will denote the set of points that ``follow''
$\CP$. That is,
\[
\Fol(\CP) := \set{x\in I_0}{
   \bigl(F^{n_1+\ldots+n_i} - (p_1 + \ldots + p_i)\bigr)(x)
   \in I_i \text{ for all } 1\leq i\leq k
}.
\]
Clearly, $\Fol(\CP)$ is a compact set and, in view of
Proposition~\ref{prop:+cover-periodic}, it is non-empty.

\section{The rotation set}\label{sec:rotation-set}

In this section we deepen the study about the rotation set of the maps
from $F\in\Li$, with $T\in\InfX$. It is divided into two subsections.
In the first one we study the connectedness and compactness of the
rotation set together with its relation with periodic {\modi} orbits.
In Subsection~\ref{sec:periods-X} we describe the information on the
periodic {\modi} orbits of the map which is carried out by the
rotation set.

\subsection{On the connectedness and compactness of the rotation
set}\label{subsec:concompRS}

The rotation set $\Rot(F)$ may not be connected (see
Example~\ref{ex:R-not-connected}) and in general we do not know
whether it is closed. However, the main result of this subsection
(Theorem~\ref{theo:RR-closed-interval}) shows that the set of rotation
numbers of points $x \in \IR$ is a non empty compact interval which
coincides with $\RotR^+(F)$ and $\RotR^-(F)$. Its proof is inspired by
\cite[Lemma 3]{Ito}.

\begin{theo}\label{theo:RR-closed-interval}
Let $T\in\InfX$ and $F\in\Li$. Then $\RotR(F)$ is a non empty compact
interval and $\RotR(F)=\RotR^+(F)=\RotR^-(F)=\Clos{\bigcup_{ n\geq 1}
\tfrac{1}{n}\Rot(\Fncr)}$. Moreover, if $\alpha\in \RotR(F)$, then
there exists a point $x\in\IR$ such that $\rhos{F}(x)=\alpha$ and
$F^n(x)\in\IR$ for infinitely many $n$. If $p/q\in\Int(\RotR(F))$,
then there exists a periodic {\modi} point $x\in\IR$ with
$\rhos{F}(x)=p/q$.
\end{theo}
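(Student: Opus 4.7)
The plan relies on Corollary~\ref{cory:incl}, which provides both the inclusion $\bigcup_{n\ge 1}\tfrac{1}{n}\Rot(\Fncr)\subseteq\RotR(F)$ and the fact that each $\Rot(\Fncr)$ is a nonempty compact interval, together with the obvious $\RotR(F)\subseteq\RotR^+(F)\cap\RotR^-(F)$. To prove the main assertion it suffices to establish (a) $\RotR^+(F)\cup\RotR^-(F)\subseteq\Clos{\bigcup_{n\ge 1}\tfrac{1}{n}\Rot(\Fncr)}$ and (b) every element of $\Clos{\bigcup_{n\ge 1}\tfrac{1}{n}\Rot(\Fncr)}$ equals $\rhos{F}(x)$ for some $x\in\IR$ with $F^n(x)\in\IR$ for infinitely many $n$. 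Together these close the cycle
\[
\bigcup_{n\ge 1}\tfrac{1}{n}\Rot(\Fncr)\subseteq\RotR(F)\subseteq\RotR^{\pm}(F)\subseteq\Clos{\bigcup_{n\ge 1}\tfrac{1}{n}\Rot(\Fncr)}\subseteq\RotR(F),
\]
forcing all four sets to coincide, and $\RotR(F)$ is then a nonempty compact interval as the closure of the nonempty interval from Corollary~\ref{cory:incl}. The last assertion is automatic: $\Int(\RotR(F))$ equals the interior of $\bigcup_{n\ge 1}\tfrac{1}{n}\Rot(\Fncr)$, which is contained in the union itself (being the interior of an interval), so any $p/q\in\Int(\RotR(F))$ has the form $\beta/n$ with $\beta\in\Rot(\Fncr)\cap\IQ$, and Theorem~\ref{theo:Fn} produces a periodic {\modi} point $x$ of $F$ with $\rhos{F}(x)=p/q$.

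For (a), let $\alpha=\orhos(x)$ with $x\in\IR$ and pick $n_k\to\infty$ with $\phi_{n_k}(x)/n_k\to\alpha$, where $\phi_n(y):=r(F^n(y))-y$. Each $\Fcrbas{F}{n_k}$ is a degree-one lifting of a circle map with nonempty compact rotation interval $\Rot(\Fcrbas{F}{n_k})$. The key classical fact is that for any degree-one lifting $G$ of a circle map, an integer $m$ belongs to $\Rot(G)$ if and only if $m$ lies in the range of $G-\Id$ (the ``rational implies periodic'' statement applied to rotation numbers of the form $m/1$). Setting $\gamma_k:=\lfloor\phi_{n_k}(x)\rfloor$, I distinguish three cases. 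If $\min(\Fcrbas{F}{n_k}-\Id)\le\gamma_k$, the intermediate value theorem on $\IR$ (using $(\Fcrbas{F}{n_k}-\Id)(x)=\phi_{n_k}(x)\ge\gamma_k$) puts $\gamma_k$ in the range of $\Fcrbas{F}{n_k}-\Id$, so $\gamma_k\in\Rot(\Fcrbas{F}{n_k})$. If instead $\min(\Fcrbas{F}{n_k}-\Id)>\gamma_k$ but $\max(\Fcrbas{F}{n_k}-\Id)\ge\gamma_k+1$, the same argument gives $\gamma_k+1\in\Rot(\Fcrbas{F}{n_k})$. Finally, if the whole range of $\Fcrbas{F}{n_k}-\Id$ lies in $(\gamma_k,\gamma_k+1)$, then $\Rot(\Fcrbas{F}{n_k})\subseteq(\gamma_k,\gamma_k+1)$ and any $\beta_k\in\Rot(\Fcrbas{F}{n_k})$ lies within distance $1$ of $\phi_{n_k}(x)$. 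In every case I obtain $\beta_k\in\Rot(\Fcrbas{F}{n_k})$ with $|\beta_k-\phi_{n_k}(x)|\le 1$, so the points $\beta_k/n_k\in\bigcup_{n\ge 1}\tfrac{1}{n}\Rot(\Fncr)$ converge to $\alpha$. The argument for $\RotR^-(F)$ is the symmetric one using $\urhos$ and $\liminf$.

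For (b), any $\alpha$ already in $\bigcup_{n\ge 1}\tfrac{1}{n}\Rot(\Fncr)$ is handled directly by Theorem~\ref{theo:Fn}, which even produces $x\in\IR$ with $\Orb_1(x,F)\subseteq\IR$. Only the (at most two) endpoints of $\Clos{\bigcup_{n\ge 1}\tfrac{1}{n}\Rot(\Fncr)}$ that are not in the union require new work. For such an endpoint $\alpha$ I pick a monotone sequence of rationals $p_k/q_k\in\bigcup_{n\ge 1}\tfrac{1}{n}\Rot(\Fncr)$ converging to $\alpha$, with associated periodic {\modi} orbits $P_k\subseteq\IR$ of $F$. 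Between consecutive $P_k$ and $P_{k+1}$ I exhibit a finite chain of positive coverings linking suitable intervals around them (possible because $p_k/q_k$ and $p_{k+1}/q_{k+1}$ are close and both realised in $\IR$), and form an infinite chain that iterates the positive-covering loop around $P_k$ exactly $N_k$ times before transitioning to the loop around $P_{k+1}$. For each finite prefix of this infinite chain, the set $\Fol$ of points following the prefix is a nonempty compact subset of $\IR$ (Proposition~\ref{prop:+cover-periodic}); these sets are nested, so their intersection contains a point $x\in\IR$ realising the full itinerary. Choosing the block lengths $N_k$ to grow fast enough forces $\urhos(x)=\orhos(x)=\alpha$, and since every $P_k\subseteq\IR$ the orbit of $x$ visits $\IR$ infinitely often.

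The main obstacle is the splicing construction in (b) at the endpoints. The existence of transition positive coverings between successive $P_k$ and $P_{k+1}$ follows from continuity together with the closeness of the rotation rates $p_k/q_k$ and $p_{k+1}/q_{k+1}$ (both realised by orbits inside $\IR$), but the quantitative balance is delicate: the block lengths $N_k$ must grow fast enough that finite transition contributions become negligible, yet not so unbalanced that $\urhos(x)$ or $\orhos(x)$ fails to equal $\alpha$ exactly. The case analysis in (a) and the direct appeal to Theorem~\ref{theo:Fn} in the bulk of (b) are comparatively routine.
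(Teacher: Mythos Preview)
Your argument for~(a) is correct and is genuinely different from the paper's route. The paper never proves the inclusion $\RotR^\pm(F)\subseteq\Clos{\bigcup_n\tfrac1n\Rot(\Fncr)}$ directly; instead it sets $a=\inf\RotR^+(F)$, $b=\sup\RotR^+(F)$ and, for \emph{every} $\alpha\in[a,b]$, builds a point $x\in\IR$ with $\rhos{F}(x)=\alpha$ in one stroke, obtaining $[a,b]\subseteq\RotR(F)$ and hence $\RotR^+(F)=[a,b]=\RotR(F)$. Your case analysis on the range of $\Fcrbas{F}{n_k}-\Id$ is a clean shortcut for the upper bound, and your reduction of the last assertion of the theorem to Theorem~\ref{theo:Fn} via $\Int(\RotR(F))\subseteq\bigcup_n\tfrac1n\Rot(\Fncr)$ is also fine.

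The gap is in~(b), precisely where you flag it. Two assertions are unjustified. First, a single twist periodic orbit $P_k\subset\IR$ does not by itself furnish a positive self-covering of a \emph{non-degenerate} interval: if $I$ is the interval between two consecutive points of $P_k$ then $I\pluscover{F^{q_k}-p_k}I$ holds only with equality at both endpoints, so the only points guaranteed to lie in $\Fol$ of all iterates are the points of $P_k$ themselves, and the block contributes nothing new. Second, and more seriously, ``closeness of $p_k/q_k$ and $p_{k+1}/q_{k+1}$'' gives no positive covering linking an interval near $P_k$ to one near $P_{k+1}$: the orbits may sit in unrelated places in $[0,1)$, and no continuity argument bridges them. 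Without these transitions the infinite chain cannot be assembled.

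The paper's construction avoids both problems by working with a \emph{single fixed} interval $I=[0,2]$ throughout. For each approximant $p_n/q_n\in(a,b)$ it picks $x_n\in[0,1]$ with $\orhos(x_n)<p_n/q_n$ and $y_n\in[x_n,x_n+1]$ with $\orhos(y_n)>p_n/q_n$ (these exist by the definition of $a,b$), and then Lemma~\ref{lem:escape} yields an integer $k_n$ with $r\circ F^{k_nq_n}(x_n)\le x_n+k_np_n-1$ and $r\circ F^{k_nq_n}(y_n)\ge y_n+k_np_n+2$, hence $I\pluscover{F^{k_nq_n}-k_np_n}I$. Because the interval is the same at every stage, no transition coverings are needed: the infinite chain is simply $(\CP_1)^{i_1}(\CP_2)^{i_2}\cdots$ with each $\CP_n$ the loop $I\pluscover{F^{k_nq_n}-k_np_n}I$, and one takes $x\in\bigcap_n\Fol((\CP_1)^{i_1}\cdots(\CP_n)^{i_n})$. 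The delicate point you mention --- choosing the block lengths $i_n$ so that $\rhos{F}(x)=\alpha$ exactly --- is handled by two explicit growth conditions on $i_n$ and a direct estimate of $|r\circ F^k(x)-x-k\alpha|/k$. If you want to salvage your route, the fix is to replace the per-orbit intervals by this common $I$; once you do, your construction and the paper's coincide.
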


To prove Theorem~\ref{theo:RR-closed-interval} we will use the
next lemma which is not difficult to prove.

\begin{lemm}\label{lem:escape}
Let $T\in\InfX$, $F\in\Li$, $x\in\IR$ and $A$ a constant. If
$\orhos(x)<\alpha$, then there exists a positive integer $N$ such
that, for all $n\geq N$, $r\circ F^n(x)\leq x+n\alpha-A$. If
$\orhos(x)>\alpha$, then there exists an increasing sequence of
positive integers $\{n_k\}_{k\geq 0}$ such that, for all $k\geq 0$,
$r\circ F^{n_k}(x)\geq x+n_k\alpha+A$.

Similar statements with the inequalities reversed hold for
$\urhos(x)$.
\end{lemm}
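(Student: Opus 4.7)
The plan is to reduce everything to the definitions of $\limsup$ and $\liminf$ and then insert a constant strictly between $\orhos(x)$ (resp.\ $\urhos(x)$) and $\alpha$ to absorb the additive constant $A$. Since $x\in\IR$ we have $r(x)=x$, so
\[
\orhos(x) = \limsup_{n\to\infty}\frac{r\circ F^n(x) - x}{n}
\quad\text{and}\quad
\urhos(x) = \liminf_{n\to\infty}\frac{r\circ F^n(x) - x}{n}.
\]

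For the first statement, assume $\orhos(x) < \alpha$ and pick $\beta$ with $\orhos(x) < \beta < \alpha$. By the definition of $\limsup$ there exists $N_0$ such that $r\circ F^n(x) - x < n\beta$ for every $n\geq N_0$. Since $\alpha - \beta > 0$, one may choose $N_1$ with $N_1(\alpha - \beta) \geq A$ (this works whether $A$ is positive or negative). Setting $N := \max(N_0,N_1)$, for every $n\geq N$ I would write
\[
r\circ F^n(x) \leq x + n\beta = x + n\alpha - n(\alpha-\beta) \leq x + n\alpha - A,
\]
which is the desired bound.

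For the second statement, assume $\orhos(x) > \alpha$ and pick $\beta$ with $\alpha < \beta < \orhos(x)$. By the definition of $\limsup$ there is an increasing sequence $(m_k)_{k\geq 0}$ of positive integers with $r\circ F^{m_k}(x) - x > m_k \beta$ for every $k$. Discard finitely many initial terms so that $m_k(\beta - \alpha)\geq A$ for all remaining indices, and relabel the resulting sequence as $(n_k)_{k\geq 0}$. Then
\[
r\circ F^{n_k}(x) > x + n_k\beta = x + n_k\alpha + n_k(\beta-\alpha) \geq x + n_k\alpha + A,
\]
as required.

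The statements for $\urhos(x)$ are handled by the symmetric argument applied to $\liminf$: if $\urhos(x) > \alpha$, choosing $\beta$ with $\alpha < \beta < \urhos(x)$ makes $r\circ F^n(x)-x > n\beta$ eventually, giving $r\circ F^n(x) \geq x + n\alpha + A$ for all sufficiently large $n$; and if $\urhos(x) < \alpha$, choosing $\beta$ with $\urhos(x) < \beta < \alpha$ produces an increasing sequence $(n_k)$ with $r\circ F^{n_k}(x) \leq x + n_k\alpha - A$. The result is essentially a translation of elementary properties of $\limsup/\liminf$, so there is no substantive obstacle — the only thing to be careful with is that the sign of $A$ is unrestricted, which is why one needs the strict separation between $\beta$ and $\alpha$ to eat up the constant for large $n$.
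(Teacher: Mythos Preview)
Your proof is correct. The paper does not actually give a proof of this lemma, stating only that it ``is not difficult to prove''; your argument---inserting an intermediate $\beta$ strictly between the rotation number and $\alpha$ and using the definition of $\limsup/\liminf$---is exactly the kind of elementary verification the authors are leaving to the reader.
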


\begin{proof}[Proof of Theorem~\ref{theo:RR-closed-interval}]
We are going to show that $\RotR^+(F)$ is a non empty compact interval
equal to $\RotR(F)$, the case with $\RotR^-(F)$ being similar.

By definition, $\RotR^+(F) \supset\RotR(F)$ and by
Corollary~\ref{cory:incl}  $\RotR(F)$ contains the non empty interval
$\bigcup_{n\geq 1}\tfrac{1}{n} \Rot(\Fncr)$. If $\RotR^+(F)$ is
reduced to a single point, then
\[
 \RotR^+(F) = \RotR(F) = \bigcup_{n\geq1} \frac{1}{n}\Rot(\Fncr)
  = \Clos{\bigcup_{n\geq1} \frac{1}{n}\Rot(\Fncr)}.
\]
Moreover, again by Corollary~\ref{cory:incl}, $\Rot(\Fcr) = \RotR(F)$.
So, the theorem follows in this case by Theorem~\ref{theo:Fn}.

In the rest of the proof we assume that $\RotR^+(F)$ contains at
least two points. This set is bounded by $\max\set{|r\circ
F(x)-r(x)|}{x\in T}$ and hence there exist $a=\inf \RotR^+(F)$ and
$b=\sup\RotR^+(F)$. Fix $\alpha\in [a,b]$. Since $a<b$, there exist
sequences of integers $p_n\in\IZ$, $q_n\in\IN$, such that, for every
$n \in \IN$, we have $\tfrac{p_n}{q_n}\in (a,b)$,
\[
 \left|\frac{p_{n}}{q_{n}}-\alpha\right| \leq
 \left|\frac{p_1}{q_1}-\alpha\right|
   \qquad\text{and}\qquad
 \lim_{n\to+\infty}\frac{p_n}{q_n}=\alpha.
\]
By the choice of $a$, $b$ and $\tfrac{p_n}{q_n}$, for all $n\geq 1$,
there exist $x_n,y_n\in\IR$ such that $\orhos(x_n) < \tfrac{p_n}{q_n}$
and $\orhos(y_n)>\tfrac{p_n}{q_n}$. Moreover, by
Lemma~\ref{lem:first-properties}(a), the points $x_n$ and $y_n$ can be
chosen so that $x_n \in [0,1]$ and $y_n\in [x_n,x_n+1]$. Set
$I=[0,2]$. By the choice of $x_n$ and $y_n$ we have $[x_n,y_n]\subset
I \subset [x_n-1,y_n+2]$.

Applying Lemma~\ref{lem:escape} to $F^{q_n}$  we see that there exist
two positive integers $N$ and $k_n > N$ such that
$r\circ F^{kq_n}(x_n) \leq x_n + kp_n - 1$ for all $k\geq N$, and
$r\circ F^{k_nq_n}(y_n)\geq y_n + k_np_n + 2$. Then
$[x_n,y_n] \pluscover{F^{k_nq_n} - k_np_n}[x_n-1,y_n+2]$
and, hence, $I \pluscover{F^{k_nq_n} - k_np_n} I$
by Lemma~\ref{lem:+cover-basic}(a).

Let $\{i_n\}_{n\geq 1}$ be a sequence of positive integers that will
be specified later and let $\CP_n := I \pluscover{F^{k_nq_n} - k_np_n}
I$. We set
\[
X_n = \Fol\left( (\CP_1)^{i_1}(\CP_2)^{i_2}\cdots(\CP_n)^{i_n}\right)
  \qquad\text{and}\qquad
X = \bigcap_{n\geq 1} X_n.
\]
As it has been noticed before, $X_n$ is a non empty compact set and,
clearly, $X_{n+1}\subset X_n$. Hence $X$ is not empty. Moreover, if
$x\in X$, then $F^n(x)\in\IR$ for infinitely many $n$.

We will show that if the sequence $\{i_n\}_{n\geq 1}$ increases
sufficiently fast then, $\rhos{F}(x)=\alpha$ for all $x\in X$. To do
it write $N_n=i_nk_nq_n$. Now we set $i_1=1$ and, if $i_1,\ldots,
i_{n-1}$ are already fixed, we choose $i_n$ such that
\begin{enumerate}[(i)]
\item $\tfrac{N_1+\cdots+N_{n-1}}{i_nk_nq_n}\leq\tfrac{1}{n}$,
\item $\tfrac{k_{n+1}q_{n+1}}{i_nk_nq_n}\leq\tfrac{1}{n}$.
\end{enumerate}
For any $k\in \IN$ there exists an integer $n$ such that
\[
 N_1+\cdots+N_{n-1}\leq k<N_1+\cdots+N_{n-1}+N_n.
\]
Therefore, there exist $0\leq i<i_n$ and $0\leq s<k_nq_n$ so that $k$
can be written as $k = \widetilde{N} + s$ where for simplicity we have
set
$\widetilde{N} := N_1+\cdots+N_{n-1}+ik_nq_n$.
On the other hand, recall that the map $y \mapsto r\circ F(y)-r(y)$ is
$1$-periodic on $T$. Thus, $L=\max\set{|r\circ F(z)-r(z)|}{z\in T}$
exists. Consequently, for $x \in X$ and $k$ large enough we have,
\[
|r\circ F^k(x)-r\circ F^{\widetilde{N}}(x)| \leq sL.
\]
Thus,
\begin{equation}\label{eq:Fk(x)-x}
\left|\frac{r\circ F^k(x)-x-k\alpha}{k}\right| \leq
 \frac{s}{k}L + \frac{s}{k}|\alpha| +
 \left|
  \frac{F^{\widetilde{N}}(x) - x - \widetilde{N}\alpha}{k}
 \right|.
\end{equation}

Since $x \in X$ we have that $x \in I$, and $F^{\widetilde{N}}(x) =
z+m$ with $z \in I$ and
\[
 m = \sum_{j=1}^{n-1}i_jk_jp_j+ik_np_n
   = \sum_{j=1}^{n-1}N_j\frac{p_j}{q_j}+ik_nq_n\frac{p_n}{q_n} .
\]
Therefore, since $I$ has length 2,
\begin{eqnarray*}
\lefteqn{
  \left|F^{\widetilde{N}}(x) - x - \widetilde{N}\alpha\right|
\leq \left|z-x|+|m-\widetilde{N}\alpha\right|}\\
 & \leq & 2 + \sum_{j=1}^{n-1} N_j\left|\frac{p_j}{q_j} -\alpha\right|
            + ik_nq_n \left|\frac{p_n}{q_n}-\alpha\right|\\
 & \leq & 2 + \sum_{j=1}^{n-2} N_j\left|\frac{p_1}{q_1}-\alpha\right|
            + N_{n-1}\left|\frac{p_{n-1}}{q_{n-1}}-\alpha\right|
            + ik_nq_n\left|\frac{p_n}{q_n}-\alpha\right|
\end{eqnarray*}
(where in the last inequality we have used that
$\left|\tfrac{p_j}{q_j}-\alpha\right| \leq
\left|\tfrac{p_1}{q_1}-\alpha\right|$ for all $j$).

Now, observe that
\begin{itemize}
\item from Condition~(i) we see that,
\[
 \frac{1}{k} \sum_{j=1}^{n-2} N_j \leq\frac{1}{N_{n-1}}
\sum_{j=1}^{n-2} N_j \leq \frac{1}{n-1},
\]

\item Condition~(ii) gives $\tfrac{s}{k} < \tfrac{q_nk_n}{N_{n-1}}
\leq\tfrac{1}{n-1}$, and

\item $\tfrac{N_{n-1}}{k} \le 1$ and $\tfrac{ik_nq_n}{k} \le 1$
because $k \ge \widetilde{N} \ge N_{n-1}+ik_nq_n$.
\end{itemize}
Consequently, by replacing all the above in
Equation~(\ref{eq:Fk(x)-x}), we obtain
\begin{multline*}
\left|\frac{r\circ F^k(x)-x}{k}-\alpha\right|
   <  \frac{L+|\alpha|}{n-1} + \frac{2}{k}
  + \frac{1}{n-1}\left|\frac{p_1}{q_1}-\alpha\right|\\
     + \left|\frac{p_{n-1}}{q_{n-1}}-\alpha\right|
     + \left|\frac{p_n}{q_n}-\alpha\right|.
\end{multline*}
Since $n$ goes to infinity when so does $k$ and $\lim_{n\to+\infty}
\tfrac{p_n}{q_n}=\alpha$, we get that the right hand side of the above
inequality converges to zero. Hence,
\[
 \rhos{F}(x) = \lim_{k\to+\infty}\frac{r\circ F^k(x)-x}{k}=\alpha.
\]
This proves that $\RotR^+(F) \subset [a,b] \subset \RotR(F)$; that is,
\[
\RotR(F)=\RotR^+(F)=[a,b].
\]

When $\alpha = \tfrac{p}{q}\in (a,b)$, the proof is simpler and gives
a periodic {\modi} point with rotation number $p/q$. Indeed, by taking
$p_1=p$ and $q_1=q$, the sequence $\CP_1$ gives
$I \pluscover{F^{k_1q}-kp}I$.
Thus, by Proposition~\ref{prop:+cover-periodic}, there exists a point
$x\in I$ such that $F^{k_1q}(x)=x+k_1p$. Hence $x$ is periodic {\modi}
and $\rhos{F^{k_1q}}(x) = k_1p$. By Lemma~\ref{lem:first-properties}
$\rhos{F}(x) = p/q$ and
$p/q\in \tfrac{1}{k_1q}\Rot(F_{k_1q}^r)$.
Moreover, by Theorem~\ref{theo:Fn},
$\tfrac{1}{k_1q}\Rot(F_{k_1q}^r)\subset \RotR(F)$.
Thus the density of the rational numbers in $[a,b]$ implies that
\[
\RotR(F) = \Clos{\bigcup_{n\geq 1}\tfrac{1}{n}\Rot(\Fncr)}.
\]
\end{proof}

\begin{rema}
The last statement of Theorem~\ref{theo:RR-closed-interval} is
weaker than Theorem~\ref{theo:set-of-periods}. We nevertheless state
it here because it is a byproduct of the proof.
\end{rema}

Generally $\RotR(F)$ is a proper subset of $\Rot(F)$. The next
proposition gives an immediate sufficient condition to have $\RotR(F)=
\Rot(F)$. We will see later other sufficient conditions (which include
the transitive case) when the lifted space $T$ is an infinite graph
(Theorem~\ref{theo:RR(F)=R(F)}).

\begin{prop}\label{prop:RR(F)=R(F)}
Let $T\in\InfX$ and $F\in\Li$. If $\displaystyle
\bigcup_{n\in\IZ}F^n(\IR)=T$ then
\[
 \RotR(F)=\Rot(F)= \Rot^+(F)=\Rot^-(F).
\]
\end{prop}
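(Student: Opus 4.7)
The plan is to show that every point $x\in T$ has the same upper and lower rotation numbers as some point $y\in\IR$; this immediately forces $\Rot^+(F)=\RotR^+(F)$, $\Rot^-(F)=\RotR^-(F)$, and $\Rot(F)=\RotR(F)$, after which Theorem~\ref{theo:RR-closed-interval} supplies the remaining equalities.

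Fix $x\in T$. By the hypothesis $\bigcup_{n\in\IZ}F^n(\IR)=T$ we are in one of two cases. \emph{Case 1:} there exist $n\ge 0$ and $y\in\IR$ with $x=F^n(y)$; then $F^k(x)=F^{n+k}(y)$ for every $k\ge 0$. \emph{Case 2:} there exists $n\ge 1$ with $F^n(x)\in\IR$; set $y:=F^n(x)\in\IR$ so that $F^k(x)=F^{k-n}(y)$ for every $k\ge n$. In either case the infinite tail of the orbit of $x$ coincides with a tail of the orbit of $y\in\IR$.

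The core (and elementary) computation is then a reindexing estimate. In Case 1,
\[
\frac{r\circ F^k(x)-r(x)}{k}
 = \frac{n+k}{k}\cdot\frac{r\circ F^{n+k}(y)-r(y)}{n+k}
   +\frac{r(y)-r(x)}{k},
\]
and in Case 2 the analogous identity holds with $n+k$ replaced by $k-n$. Since $\frac{n+k}{k}\to 1$ (respectively $\frac{k-n}{k}\to 1$) and $\frac{r(y)-r(x)}{k}\to 0$ as $k\to+\infty$, taking $\liminf$ and $\limsup$ yields $\urhos(x)=\urhos(y)$ and $\orhos(x)=\orhos(y)$. Thus $\rhos{F}(x)$ exists if and only if $\rhos{F}(y)$ does, and they are equal when they exist.

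It follows that $\Rot^+(F)\subset\RotR^+(F)$, $\Rot^-(F)\subset\RotR^-(F)$, $\Rot(F)\subset\RotR(F)$; the reverse inclusions are trivial by definition. Finally, Theorem~\ref{theo:RR-closed-interval} gives $\RotR(F)=\RotR^+(F)=\RotR^-(F)$, which completes the chain of equalities. There is no serious obstacle here; the only point to be careful about is that ``$F^n(\IR)$ for $n<0$'' must be interpreted as the preimage $\{x\in T:F^{|n|}(x)\in\IR\}$, so that Case 2 makes sense, and the reindexing must be written so that the shift by $n$ between the two orbits only affects a constant additive term $r(y)-r(x)$ and an index shift that vanishes after division by $k$.
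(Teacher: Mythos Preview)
Your proof is correct and follows essentially the same approach as the paper's: split into the forward-image and preimage cases, observe that in each case the point of $T$ shares its upper and lower rotation numbers with a point of $\IR$, and then invoke Theorem~\ref{theo:RR-closed-interval}. The only difference is that you spell out the reindexing identity explicitly, whereas the paper simply asserts $\orhos(y)=\orhos(x)$ and $\urhos(y)=\urhos(x)$ without further justification.
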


\begin{proof}
Let $y\in T$. If $y\in F^n(\IR)$ with $n\geq 0$, let $x\in\IR$ such
that $y=F^n(x)$. If $y\in F^{-n}(\IR)$ with $n\geq 0$, let
$x=F^n(y)\in\IR$.  In both cases, $\orhos(y)=\orhos(x)$ and
$\urhos(y)=\urhos(x)$. Thus, $\RotR^+(F) = \Rot^+(F)$, $\RotR^-(F)
= \Rot^-(F)$ and $\RotR(F) = \Rot(F)$. On the other hand, by
Theorem~\ref{theo:RR-closed-interval}, we get that
$\RotR(F) = \RotR^+(F) = \RotR^-(F)$;
which ends the proof of the proposition.
\end{proof}

\subsection{Relation between the rotation set and the set of
periods}\label{sec:periods-X}
In this subsection, we study the set of periods of periodic {\modi}
points with a given (rational) rotation number. To be more precise we
need to introduce the appropriate notation.

\begin{defi}
Let $T \in \InfX$ and $F \in \Li$.
The set of periods of all periodic {\modi} points of $F$ in $T$ will
be denoted by $\Per(F)$. Also, given $\alpha \in \IR$,
$\Per(\alpha,F)$ will denote the set of periods of all periodic
{\modi} points of $F$ in $T$ whose $F$-rotation number is $\alpha$.
Similarly, we denote by $\PerR(F)$ and $\PerR(\alpha,F)$ the same
sets as before with the additional restriction that the periodic
{\modi} points under consideration must belong to $\IR$  (we do
not require that the whole periodic {\modi} orbits belong to $\IR$).
\end{defi}

The main results of this section state that, for every
$p/q\in\Int(\RotR(F))$, the set $\Per(p/q,F)$
contains $\set{nq}{\text{for all $n\in\IN$ large enough}}$.
Moreover, if $\RotR(F)$ is not reduced to a
single point, then $\IN \setminus \Per(F)$ is finite.

The next proposition clarifies the relation between $\Per(F)$ and
$\Per(\alpha,F)$. It improves Remark~\ref{rem:rotorbits}(ii).

\begin{prop}\label{prop:Per-subset-qN}
Assume that $F\in\Li$. Then,
\[
  \Per(F) = \bigcup_{\alpha \in \Rot(F) \cap \IQ} \Per(\alpha,F).
\]
On the other hand, if $p,q$ are coprime and $p/q \in \Rot(F)$, then
$\Per(p/q,F)\subset q\IN$.
\end{prop}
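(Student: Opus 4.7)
The plan is to prove both assertions directly from the definitions, using Remark~\ref{rem:rotorbits}(ii) as the key input. Both statements are essentially bookkeeping about how rational rotation numbers arise from periodic {\modi} points, so no new machinery is required.

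For the first equality, the inclusion $\bigcup_{\alpha \in \Rot(F) \cap \IQ} \Per(\alpha,F) \subset \Per(F)$ is immediate from the definition of $\Per(\alpha,F)$. For the reverse inclusion I would take $n \in \Per(F)$, pick a witness $x \in T$ that is periodic {\modi} of period $n$, and invoke Remark~\ref{rem:rotorbits}(ii) to conclude that $\rhos{F}(x)$ exists and is a rational number. Since $x$ is periodic {\modi}, we have $\rhos{F}(x) \in \Rot(F) \cap \IQ$ and $n \in \Per(\rhos{F}(x),F)$, which lands $n$ in the right-hand union.

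For the second assertion, suppose $p,q$ are coprime with $p/q \in \Rot(F)$ and let $n \in \Per(p/q,F)$. Choose a periodic {\modi} point $x$ of period $n$ with $\rhos{F}(x) = p/q$. By the definition of period {\modi} there exists $m \in \IZ$ with $F^n(x) = x+m$, and then Remark~\ref{rem:rotorbits}(ii) gives $\rhos{F}(x) = m/n$. Thus $m/n = p/q$, i.e.\ $mq = np$. Since $\gcd(p,q)=1$, Euclid's lemma forces $q \mid n$, so $n \in q\IN$.

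There is no real obstacle here; the only point to be mildly careful about is to cite Remark~\ref{rem:rotorbits}(ii) rather than re-derive that $F^n(x) = x+m$ implies $\rhos{F}(x) = m/n$ (which in turn relies on $\rhos{F}$ being well defined on a periodic {\modi} point, something that follows from the boundedness of $r\circ F^{kn}(x) - x - km$ as $k$ varies, via Lemma~\ref{lem:degree1-Fn}(a) and the $1$-periodicity of $r\circ F - r$).
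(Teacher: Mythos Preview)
Your proof is correct and follows essentially the same approach as the paper: both statements are derived directly from Remark~\ref{rem:rotorbits}(ii), and the second uses the coprimality of $p,q$ to deduce $q\mid n$ from $m/n=p/q$ just as the paper does.
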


\begin{proof}
The first statement of the proposition follows directly from
Remark~\ref{rem:rotorbits}(ii).

Now assume that $p \in \IZ$ and $q \in \IN$ are
coprime and let $n \in \Per(p/q,F)$. Assume that $x$ is a
periodic {\modi} point of $F$ of period $n$ such that
$\rhos{F}(x)=p/q$. There exists $k\in\IZ$ such that
$F^n(x)=x+k$. By what precedes, $\rhos{F}(x)=k/n=p/q$.
Then, since $p,q$ are coprime there exists $d \geq 1$ such that
$k=dp$ and $n=dq$. That is, $n \in q\IN$.
\end{proof}

\medskip
The next proposition gives a sufficient condition to have periodic
points of all large enough periods. It is a key tool for
Theorem~\ref{theo:set-of-periods}.

\begin{defi}\label{def:chi}
Let {\map{\chi}{\IR^+}[\IN]} be the map defined by
\[
\chi(t) = \begin{cases}
     \max\{\lceil t\rceil^2, 51\lceil t \rceil\}
          & \text{if $t > 1$,}\\
     1    & \text{when $0 \le t \le 1$,}
   \end{cases}
\]
where $\lceil \cdot \rceil$ denotes the ceiling function.
\end{defi}

\begin{prop}\label{prop:arithmetic-period}
Let $T\in\InfX$, $F\in\Li$, and let $I,\ J$ be two disjoint compact
non degenerate subintervals of $\IR$. Assume that there exists a
constant $t>0$ such that for all integers $n\geq t$ both $I$ and $J$
positively $F^n$-cover $I$ and $J$.  Then, for every positive integer
$m\geq \chi(t)$, there exists a point $x\in I$ such that $F^m(x)=x$
and $F^i(x) \neq x$ for all $1\leq i\leq m-1$.
\end{prop}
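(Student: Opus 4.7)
Plan. My plan is to apply Proposition~\ref{prop:+cover-periodic} to a carefully chosen loop of positive coverings whose symbolic itinerary in the alphabet $\{I, J\}$ is aperiodic modulo every proper divisor of $m$.

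Setup. Let $\tau := \lceil t \rceil$. By hypothesis, for every integer $n \geq \tau$ the four positive coverings $I \pluscover{F^n} I$, $I \pluscover{F^n} J$, $J \pluscover{F^n} I$ and $J \pluscover{F^n} J$ all hold. Consequently, for any choice of symbols $K_0, K_1, \ldots, K_k \in \{I, J\}$ with $K_0 = K_k = I$ and any integers $n_1, \ldots, n_k \geq \tau$ with $n_1 + \cdots + n_k = m$, Proposition~\ref{prop:+cover-periodic} produces a point $x \in I$ satisfying $F^m(x) = x$ and $F^{s_i}(x) \in K_i$, where $s_i := n_1 + \cdots + n_i$. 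The $F$-period of $x$ divides $m$, so the goal reduces to forcing this period to equal $m$.

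The key observation is: if $x$ had minimal period $d \mid m$ with $d < m$, then $s_i \equiv s_j \pmod{d}$ would imply $F^{s_i}(x) = F^{s_j}(x)$, forcing $K_i = K_j$ because $I \cap J = \emptyset$. So it suffices to find an itinerary such that, for each proper divisor $d$ of $m$, some pair $i, j$ has $s_i \equiv s_j \pmod d$ and $K_i \neq K_j$. Since aperiodicity modulo $d$ is inherited by divisors of $d$, only the maximal proper divisors $m/p$ (one per prime $p \mid m$) need to be treated.

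Construction and main difficulty. When $t \leq 1$, so $\tau = 1$ and $\chi(t) = 1$, I would use unit-length gaps with a simple itinerary such as $(I, J, I, \ldots, I)$ of length $m+1$, which is easily checked to be aperiodic modulo every $d < m$. When $t > 1$, so $\tau \geq 2$ and $m \geq \max(\tau^2, 51\tau)$, I would set $k := \lfloor m/\tau \rfloor$ (so $k \geq \tau$ from the first bound and $k \geq 51$ from the second), place checkpoints at $s_i = i\tau$ for $0 \leq i < k$ with a final gap $n_k = m - (k-1)\tau \in [\tau, 2\tau)$, and position $J$'s at indices dictated by the prime divisors of $m$. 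The main obstacle is verifying aperiodicity simultaneously for all primes $p \mid m$: writing $r_p := (m/p)/\gcd(m/p, \tau)$, inconsistency modulo $m/p$ requires indices of opposite symbol differing by a multiple of $r_p$ inside $\{0, \ldots, k-1\}$. When $p \leq \tau$ the common factor shrinks $r_p$ well below $k$; when $p > \tau$ one gets $r_p \leq m/p < m/\tau \approx k$. The numerical thresholds $\tau^2$ and $51\tau$ in $\chi(t)$ are calibrated to make these placements possible in every case, especially the awkward one in which $m$ has several small prime factors—this case analysis is where I expect most of the real work to lie.
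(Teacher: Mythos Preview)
Your reduction is correct and matches the paper's: applying Proposition~\ref{prop:+cover-periodic} to a loop of positive coverings in the alphabet $\{I,J\}$, and observing that a putative period $d<m$ forces $K_i=K_j$ whenever $s_i\equiv s_j\pmod d$, so it suffices to defeat the maximal proper divisors $m/p$. Where you diverge from the paper is in the construction for $t>1$: you fix the gaps (almost all equal to $\tau$) and propose to vary the \emph{symbols}; the paper does the opposite, fixing the symbol pattern once and for all as $(I,J,J,\ldots,J,I)$ and varying the \emph{gaps}. Concretely, the paper isolates the whole difficulty into a purely arithmetical statement (Lemma~\ref{lem:arithmetic}): for $m\ge\chi(N)$ one can write $m=n_1+\cdots+n_{k_0}$ with every $n_i\ge N$ and such that every proper divisor $d$ of $m$ divides some proper partial sum $n_1+\cdots+n_i$. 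With this lemma in hand the dynamical part is immediate: the periodic point $x$ returned by Proposition~\ref{prop:+cover-periodic} has $F^{n_1+\cdots+n_i}(x)\in J$ for $1\le i\le k_0-1$, so if $d$ divided the period then $F^{n_1+\cdots+n_i}(x)=x\in I$ for the index $i$ furnished by the lemma, contradicting $I\cap J=\emptyset$.

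Your sketch for $t>1$ is not yet a proof, and I am not convinced it closes without substantial extra work. The difficulty you flag at the end is real: with checkpoints $s_i=i\tau$ you must place the $J$'s so that, simultaneously for every prime $p\mid m$, two checkpoints of opposite symbol lie in the same residue class modulo $r_p=(m/p)/\gcd(m/p,\tau)$. These constraints interact, and you have not exhibited a placement nor shown that the thresholds $\tau^2$ and $51\tau$ suffice for one. The paper's trick sidesteps exactly this interaction: by making all intermediate symbols $J$ and only the endpoints $I$, the requirement for each divisor $d$ becomes simply ``some partial sum is $\equiv 0\pmod d$'', and these conditions can be satisfied greedily by letting the partial sums march through the divisors $m/p_1<m/p_2<\cdots$ in order (the consecutive differences are $\ge N$ for the large divisors, and at most three small divisors remain, which is where the bounds $N^2$ and $51N$ enter). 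If you want to complete your route, the cleanest fix is to abandon the equal-gap scheme and adopt the paper's: keep the fixed pattern $(I,J,\ldots,J,I)$ and prove the arithmetic decomposition directly.
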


The proof of the proposition entirely relies on the following
arithmetical lemma.

\begin{lemm}\label{lem:arithmetic}
Let $N \in \IN$. Then, for every $m \geq \chi(N)$, there exist
$n_1,\ldots,n_{k_0}$ such that
\begin{enumerate}
\item $n_1+n_2+\cdots+n_{k_0}=m$,

\item $n_i\geq N$ for all $1\leq i\leq k_0$,

\item if $d$ divides $m$, $d \neq m$, then there exists $1\leq i\leq
k_0-1$ such that $d$ divides $n_1+\cdots+n_i$.
\end{enumerate}
\end{lemm}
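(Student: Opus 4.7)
The plan is to reformulate the lemma in terms of the partial sums $S_j := n_1 + \cdots + n_j$. Conditions (a), (b) and (c) become, respectively: $S_{k_0} = m$; $S_j - S_{j-1} \geq N$ for all $j$; and every proper divisor $d$ of $m$ divides some $S_j$ with $1 \leq j \leq k_0 - 1$. Thus it suffices to construct a strictly increasing integer sequence $0 = S_0 < S_1 < \cdots < S_{k_0} = m$ with consecutive gaps at least $N$, whose interior terms collectively cover every proper divisor of $m$ in the divisibility sense; the $n_i$ are then recovered as consecutive differences.

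The key reduction I would use is that it suffices to treat only the ``maximal'' proper divisors. For each prime $p$ dividing $m$ set $M_p := m/p$; every proper divisor $d$ of $m$ divides $M_p$ for any prime $p$ whose exponent in $d$ is strictly less than its exponent in $m$ (such a prime exists since $d \neq m$). Hence any multiple of $M_p$ is automatically a multiple of such a $d$, and one only needs to ensure that, for each prime $p \mid m$, some interior $S_j$ is a multiple of $M_p$. Writing $p_1 < \cdots < p_r$ for the distinct prime factors of $m$, the number of targets to place is $r \leq \log_2 m$.

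For the construction, I would, for each $p_j$, pick a multiple $a_j = c_j M_{p_j}$ with $c_j \in \{1, \ldots, p_j - 1\}$ lying in $[N, m - N]$; such a multiple exists since $m \geq \chi(N) \geq 51 N$ leaves ample room. When $M_{p_j} \geq N$ (equivalently $p_j \leq m/N$) consecutive multiples of $M_{p_j}$ are themselves at least $N$ apart and the choice of $c_j$ is flexible; when $M_{p_j} < N$ the multiples are dense and a choice can be made in any prescribed sub-interval of length $N$. I would sort the $a_j$ into an increasing list; if two neighbours are closer than $N$, modify the offending $c_j$'s (or merge several primes into a single target when their common multiples allow) to restore the gap; and finally add a single filler point near $0$ or $m$ if an edge-gap is still too small, taking the resulting list as $S_1, \ldots, S_{k_0 - 1}$.

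The main obstacle is exactly this collision-avoidance step. It is delicate when several primes $p_j$ cluster near $\sqrt{m}$: in that regime consecutive values $M_{p_j} = m/p_j$ differ by $m(p_{j+1} - p_j)/(p_j p_{j+1})$, which can be much smaller than $N$, so the naive choice $c_j = 1$ fails and one must juggle the $c_j$'s carefully. The precise form $\chi(t) = \max\{\lceil t\rceil^2, 51\lceil t\rceil\}$ is calibrated for this: the quadratic term handles $N > 51$ (where $m \geq N^2$ dwarfs the number of primes to juggle), and the factor $51$ absorbs the finitely many small-$N$ configurations. Verifying the collision-resolution procedure in full generality—likely by an explicit case analysis based on the shape of the prime factorisation of $m$—is where the combinatorial heart of the argument lies.
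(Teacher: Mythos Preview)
Your reformulation in terms of partial sums and your reduction to the maximal proper divisors $M_p = m/p$ match the paper's approach exactly. However, the proof is incomplete: you correctly flag the collision-avoidance step as the heart of the matter but do not carry it out, and the plan of ``juggling the $c_j$'s'' or performing a case analysis on the factorisation of $m$ is too vague to conclude.

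The paper resolves this cleanly with a threshold argument that you almost reach. From your own difference formula one gets $M_{p_j} - M_{p_{j+1}} = m(p_{j+1}-p_j)/(p_j p_{j+1}) \geq m/p_{j+1}^2$, so consecutive $M_p$'s are automatically at least $N$ apart whenever the larger of the two primes satisfies $p \leq \sqrt{m/N}$. For all such ``small'' primes the naive choice $c_j = 1$ already gives partial sums with gaps $\geq N$, and no adjustment is needed at all. The hypothesis $m \geq N^2$ from $\chi$ gives $(\sqrt{m/N})^4 \geq m$, so at most three prime factors of $m$ can exceed $\sqrt{m/N}$. For each of those (at most three) ``large'' primes one appends a single extra term $n_i \in [N, N + M_p)$ to reach a multiple of the corresponding $M_p$. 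The final filler term is then at least $m/2 - 3\sqrt{mN} - 3N$, and the constant $51$ in $\chi$ is exactly what forces this to be $\geq N$.

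So the missing idea is the threshold $\sqrt{m/N}$ that caps the number of problematic primes at three; without it, a generic collision-resolution scheme over $r \leq \log_2 m$ targets has no obvious reason to fit under the bound $\chi(N)$.
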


\begin{proof}
If $N=1$, then the result is obvious by taking $k_0=m$ and $n_i=1$ for
all $1\leq i\leq m$, because $\chi(N) = 1$.

Let $m\geq N>1$. We write
\[
m=p_1^{\alpha_1}\ldots p_k^{\alpha_k}
\]
with $\alpha_i\geq 1$ and $p_1>p_2>\cdots>p_k$ the prime factors of
$m$. We define $d_i=\tfrac{m}{p_i}$ for all $1\leq i\leq k$. If $d$
divides $m$, $d \neq m$, then $d$ divides $d_i$ for some $1\leq i\leq
k$.  Consequently, it is sufficient to prove the lemma for the
divisors $d_1,\ldots, d_k$ instead of for any $d$ dividing $m$ and $d
\neq m$. The numbers $d_i$ are ordered as follows:
\[
d_1<d_2<\cdots<d_k.
\]
The idea of the proof is the following. A small $d_i$ corresponds to a
large prime factor $p_i$, and thus most of the $d_i$'s are ``large''.
It will be possible to write these large divisors as a sum
$n_1+\cdots+n_i$ with $n_j\geq N$. It will remain to deal with a small
number of small $d_i$'s. For computational reasons, we fix the
boundary between ``large'' and ``small'' $d_i$'s at
$\tfrac{\sqrt{m}}{\sqrt{N}}$.

Assume that $m\geq N^2$, which is equivalent to
$\left(\tfrac{\sqrt{m}}{\sqrt{N}}\right)^4\geq m$. This implies that
$m$ has at most three prime factors $p_i>\tfrac{\sqrt{m}}{\sqrt{N}}$,
which are $\{p_i\}_{1\leq i\leq \eps}$ for some $0\leq \eps\leq 3$
($\eps$ may be zero).

We first deal with $\{d_i\}_{\eps+1\leq i\leq k}$ (the ``large''
divisors --- note that this set is empty when $\eps=k$). For $i
\in \{\eps+1,\dots, k\},$ we have $d_i\geq \sqrt{m}\sqrt{N} \geq N$
because $p_i\leq \tfrac{\sqrt{m}}{\sqrt{N}}$. Moreover, for all
$i \in \{\eps+1,\dots, k\},$
\[
d_{i+1}-d_i=\frac{m(p_i-p_{i+1})}{p_ip_{i+1}}\geq \frac{m}{p_i^2}\geq
N.
\]
We define $n_1=d_{\eps+1}$ and $n_{i+1}=d_{\eps+i+1}-d_{\eps+i}$ for
all
$1\leq i\leq k-\eps-1$.  In this way, $n_i\geq N$ and
$n_1+\cdots+n_i=d_{\eps+i}$ for all $1\leq i\leq k-\eps$.

Now we deal with $\{d_i\}_{1\leq i\leq \eps}$ (the ``small''
divisors). For all $1\leq i\leq \eps$, we define $n_{k-\eps+i}$ such
that $d_{k+i}$ divides $n_1+\cdots+n_{k-\eps+i}$ and $N\leq
n_{k-\eps+i}\leq N+d_{k+i}$.

Finally, we define $k_0=k+1$ and $n_{k_0}=m-(n_1+\cdots+n_{k_0-1})$.
It remains to show that $n_{k_0}\geq N$ when $m$ is large enough. To
prove it, observe that $n_1+\cdots+n_{k-\eps}=\tfrac{m}{p_k}\leq
\tfrac{m}{2}$ and, for all $1\leq i\leq\eps$,
$p_i>\tfrac{\sqrt{m}}{\sqrt{N}}$. Thus $d_i<\sqrt{m}{\sqrt{N}}$. This
implies that
\[
n_{k_0}\geq \frac{m}{2}-3\sqrt{m}\sqrt{N} -3N.
\]
Suppose that $m\geq \alpha^2N$, $\alpha>0$. Then $n_{k_0}\geq
\left(\tfrac{\alpha^2}{2}-3\alpha-3\right)N$. To have $n_{k_0}\geq N$,
it  is sufficient to have $\tfrac{\alpha^2}{2}-3\alpha-3\geq 1$, that
is, $\alpha\geq 3+\sqrt{17}$. Since $(3+\sqrt{17})^2<51$, it follows
that when $N>1$ then it is sufficient to have $m$ larger than or equal
to $\max\{N^2, 51N\}$. This completes the proof of the lemma.
\end{proof}

\begin{rema}
The values of the function $\chi$ specified in
Definition~\ref{def:chi} are not optimal, but this is not important.
We only need that there exist positive integers $\chi(N)$ verifying
Lemma~\ref{lem:arithmetic}, and that $\chi(t)=1$ if $0\leq t\leq 1$.
\end{rema}

\begin{proof}[Proof of Proposition~\ref{prop:arithmetic-period}]
Take $m\geq \chi(t)$ and write $m = n_1+\cdots+n_k$ with
$n_1,\ldots,n_k$ satisfying Lemma~\ref{lem:arithmetic} for $N=\lceil
t\rceil $. We consider
\[
I\pluscover{F^{n_1}}J\pluscover{F^{n_2}}J\pluscover{F^{n_3}} \cdots
J\pluscover{F^{n_k}} I.
\]

By Proposition~\ref{prop:+cover-periodic} (with $q_i=1$ and $p_i=0$),
there exists $x$ in $I$ such that $F^m(x)=x$ and
$F^{n_1+\cdots+n_i}(x)\in J$ for all $1\leq i\leq k-1$. We have to
prove that $F^i(x) \neq x$ for all $1\leq i\leq m-1$. Let $d$ be the
minimal positive integer such that $F^d(x)=x$. Clearly, $d$ divides
$m$. Suppose that $d<m$. Then, in view of
Lemma~\ref{lem:arithmetic}(c) there exists $1\leq i\leq k-1$ such that
$d$ divides $n_1+\cdots+n_i$, which implies that
$F^{n_1+\cdots+n_i}(x)=x$. On the other hand,
$F^{n_1+\cdots+n_i}(x)\in J$ and $I\cap J=\emptyset$, which leads to a
contradiction. Thus, the period of $x$ is $d=m$.
\end{proof}

\medskip
In the rest of this subsection we use
Proposition~\ref{prop:arithmetic-period} to study the sets
$\PerR(p/q,F)$ and $\PerR(F)$. Obviously, these sets depend on
$\RotR(F)$ which, by Theorem~\ref{theo:RR-closed-interval} is a
non-empty compact interval of the real line. The next result is the
analogue in our setting (although it is somewhat weaker) of
\cite[Lemma~3.9.1]{ALM} that, for circle maps of degree one, says that
if $p/q\in\Int(\Rot(F))$ with $p$ and $q$ coprime, then
$\Per(p/q,F)=q\IN$.

\begin{theo}\label{theo:set-of-periods}
Let $T\in\InfX$, $F\in\Li$ and $\alpha,\beta\in\Int(\RotR(F))$,
$\alpha\leq\beta$. There exists a positive integer $N$ (depending on
$\alpha,\beta$) such that, if $\tfrac{p}{q}\in [\alpha,\beta]$ with
$p,q$ coprime, then
\[
\PerR(p/q,F)\supset \set{mq}{m\geq \chi(N/q)}.
\]
In particular, if $q\geq N$ then $\PerR(p/q,F)=q\IN$.
\end{theo}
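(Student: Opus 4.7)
The plan is to apply Proposition~\ref{prop:arithmetic-period} to the degree one map $G := F^q - p \in \Li$ for each pair of coprime integers $(p,q)$ with $p/q \in [\alpha,\beta]$. By Lemma~\ref{lem:degree1-Fn}(b), $G^n = F^{qn} - np$, so a positive covering by $G^n$ is a positive covering by $F^{qn} - np$. If $x \in \IR$ has minimal $G$-period $m$, then $F^{qm}(x) = x + pm$, whence $\rhos{F}(x) = p/q$; any strictly smaller $F$-period mod~$1$, say $F^j(x) = x + k$ with $j < qm$, would force $k/j = p/q$ and hence (by coprimality of $p$ and $q$) $j = \ell q$, $k = \ell p$ with $\ell < m$, contradicting the minimality of $m$ under $G$. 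Thus it suffices to exhibit two disjoint compact non-degenerate subintervals $I, J \subset \IR$ and a threshold $t$ such that $I$ and $J$ each positively $G^n$-cover both $I$ and $J$ for every integer $n \geq t$, with $t$ of order $1/q$.

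The intervals $I, J$ are chosen once and for all, independently of $p/q$. Since $\alpha,\beta \in \Int(\RotR(F))$ and by Theorem~\ref{theo:RR-closed-interval} the sets $\RotR, \RotR^+, \RotR^-$ coincide, I pick $x_0 \in \IR$ with $\orhos(x_0) < \alpha$ and $y_0 \in \IR$ with $\urhos(y_0) > \beta$. Using Lemma~\ref{lem:first-properties}(a) I translate so that $x_0 \leq y_0$, and I set $I := [x_0, y_0]$ and $J := I + M$ for an integer $M$ large enough that $I \cap J = \emptyset$. Lemma~\ref{lem:escape} applied to $x_0$ (with threshold $\alpha$ and constant $A := M$) and to $y_0$ (with threshold $\beta$ and constant $M$) yields a single integer $N$, depending on $\alpha,\beta,M,x_0,y_0$ but crucially \emph{not} on $p/q$, such that for every $m \geq N$,
\[
r\circ F^m(x_0) \leq x_0 + m\alpha - M
\qquad\text{and}\qquad
r\circ F^m(y_0) \geq y_0 + m\beta + M.
\]

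Substituting $m = qn$ and using $\alpha \leq p/q \leq \beta$ gives, for every $p/q \in [\alpha,\beta]$ and every integer $n \geq N/q$, that $r\circ F^{qn}(x_0) - np \leq x_0 - M$ and $r\circ F^{qn}(y_0) - np \geq y_0 + M$. Together with the translation identity $r(F^{qn}(z+M)) = r(F^{qn}(z)) + M$ from Lemma~\ref{lem:degree1-Fn}(d), this verifies the four positive coverings $I \pluscover{G^n} I$, $I \pluscover{G^n} J$, $J \pluscover{G^n} I$ and $J \pluscover{G^n} J$ for every integer $n \geq N/q$. Proposition~\ref{prop:arithmetic-period} with $t = N/q$ then produces, for each $m \geq \chi(N/q)$, a point $x \in I \subset \IR$ of minimal $G$-period exactly $m$, which by the opening paragraph belongs to $\PerR(p/q, F)$ with $F$-period mod~$1$ equal to $qm$. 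This gives the announced inclusion with $N$ as constructed; the ``in particular'' part follows because $q \geq N$ forces $\chi(N/q) = 1$, combined with the reverse inclusion $\PerR(p/q,F) \subset q\IN$ from Proposition~\ref{prop:Per-subset-qN}. The main subtlety is the uniformity of $N$ in $p/q$, which is exactly the reason for imposing the \emph{strict} inequalities $\orhos(x_0) < \alpha$ and $\urhos(y_0) > \beta$ at the outset.
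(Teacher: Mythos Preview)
Your proof is correct and follows essentially the same route as the paper's: pick two reference points with rotation numbers strictly below $\alpha$ and strictly above $\beta$, use Lemma~\ref{lem:escape} to get a uniform threshold $N$, and then apply Proposition~\ref{prop:arithmetic-period} to $G = F^q - p$. The paper arranges the translate so that $I$ has length less than $1$ and takes $J = I+1$, whereas you allow a general integer shift $J = I+M$; this is a cosmetic difference. One small point: you should write $x_0 < y_0$ rather than $x_0 \le y_0$ so that $I$ is non-degenerate (this is always achievable by translating $y_0$), and you are in fact slightly more careful than the paper in invoking $\urhos(y_0) > \beta$ rather than $\orhos$, since it is the $\urhos$ version of Lemma~\ref{lem:escape} that gives the inequality for \emph{all} $n \ge N$ rather than along a subsequence.
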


\begin{proof}
According to Lemma~\ref{lem:escape}, there exist a positive integer
$N$ and two points $x_0,x_1\in\IR$ such that $\orhos(x_0) < \alpha$,
$\orhos(x_1) > \beta$ and, for all $n\geq N$, $r\circ F^n(x_0)\leq
x_0+n\alpha-1$ and $r\circ F^n(x_1)\geq x_1+n\beta+1$ By
Lemma~\ref{lem:degree1-Fn}(a) we may translate $x_1$ by an integer
such that $x_0 < x_1 < x_0+1$. Set $I=[x_0,x_1]$. Clearly, for every
$n \ge N$ and $j \in \{n\alpha-1, \dots , n\beta+1\} \cap \IN$, we
have
\[
I\pluscover{F^n} I+j.
\]
In particular, if $nq \geq N$ and $i \in \{nq\alpha-np-1, \dots ,
nq\beta-np+1\} \cap \IN$,
\[
I\pluscover{F^{nq}-np} I+i.
\]
Thus $I$ positively $(F^q-p)^n$-covers $I-1$, $I$ and $I+1$ (notice
that $nq\alpha-np\leq 0\leq nq\beta-np$ because $p/q\in
[\alpha,\beta]$).

Set $J=I+1$. Then $I\cap J=\emptyset$ and both $I$ and $J$ positively
$(F^q-p)^n$-cover $I$ and $J$ for all $n\geq N/q$. According to
Proposition~\ref{prop:arithmetic-period}, we get that, for all $m\geq
\chi(N/q)$, there exists a periodic point $x$ of period $m$ for the
map  $F^q-p$. Hence $F^{qm}(x)=x+mp$ and $\rhos{F}(x)=\tfrac{p}{q}$.
To end the proof of the first statement of the theorem we have to show
that $F^{i}(x)-x \notin \IZ$ for $i=1,2,\dots,mp-1$. Assume that, on
the contrary, there exists $1 \le d = \tfrac{mq}{l}$ with $l \in \IN$,
$l > 1$ such that $F^d(x) = x + a$ for some $a \in \IZ$. Then, in view
of Lemma~\ref{lem:degree1-Fn}(a),
\[
 x + mp = F^{mq}(x) = F^{ld}(x) = x + la = x + \frac{mq}{d} a.
\]
Consequently, $a = d\tfrac{p}{q}$ with $d\tfrac{p}{q}\in\IZ$. Thus $d$
must be a multiple of $q$ because $p,q$ are coprime. Write $d=bq$.
Then $F^{bq}(x)=x + bp$, which implies that $b=m$ which, in turn,
implies $d = mq$. In other words, $x$ is periodic {\modi} of
period $mq$ for $F$. Therefore, $\PerR(p/q,F)\supset \set{mq}{m\geq
\chi(N/q)}$.

The second statement of the theorem follows from the first one
and the fact that $\chi(t)=1$ whenever $t\leq 1$.
\end{proof}

\begin{rema}
In view of Example~\ref{ex:gaps-in-Per(p/q)}, the positive integer $N$
of Theorem~\ref{theo:set-of-periods} cannot be taken uniform for the
whole interval $\Int(\RotR(F))$.

On the other hand, Theorem~\ref{theo:set-of-periods} does not imply
that $\PerR(p/q,F)$ is equal to $\set{n\in\IN}{n\geq N}$ for some
positive integer $N$ (see Example~\ref{ex:Per0not=N}).
\end{rema}

In Corollary~\ref{cor:N-Per(F)} we deduce from
Theorem~\ref{theo:set-of-periods} that $\Per(F)$ contains all but
finitely integers, provided $\RotR(F)$ is non-degenerate. Its proof
relies on the next arithmetical lemma.

\begin{lemm}\label{lem:BA}
Let $N$ be a positive integer and $\alpha,\beta\in\IR$,
$\alpha<\beta$. There exists a positive integer $N_0$ such that, for
all $n\geq N_0$, there exists $\tfrac{p}{q}\in [\alpha,\beta]$ with
$p,q$ coprime, such that $q\geq N$ and $q$ divides $n$.
\end{lemm}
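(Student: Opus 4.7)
The plan is to argue by a counting argument: I will parametrise candidate pairs $(p,q)$ by integers $m \in [n\alpha, n\beta]$ via $p/q = m/n$ written in lowest terms, i.e.\ $q = n/\gcd(m,n)$ and $p = m/\gcd(m,n)$. With this choice, $q \mid n$, $\gcd(p,q) = 1$, and $p/q = m/n \in [\alpha,\beta]$ all hold automatically. Hence the lemma reduces to finding some integer $m \in [n\alpha, n\beta]$ with $\gcd(m,n) \leq n/N$, which is precisely the condition $q \geq N$.

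I would call such an $m \in [n\alpha, n\beta]\cap\IZ$ \emph{bad} when $\gcd(m,n) > n/N$, and show that the number of bad $m$ is bounded independently of $n$. Any bad $m$ is a multiple of some divisor $d$ of $n$ with $d > n/N$. Under the involution $d \mapsto n/d$ of the divisor lattice of $n$, such divisors are in bijection with the divisors of $n$ lying in $\{1,2,\ldots,N-1\}$, so there are at most $N-1$ of them. For each such $d$, the multiples of $d$ in $[n\alpha,n\beta]$ number at most $n(\beta-\alpha)/d + 1 < N(\beta-\alpha)+1$, since $d > n/N$. Summing over the (at most $N-1$) bad divisors gives a total of at most $C := (N-1)\bigl(N(\beta-\alpha)+1\bigr)$ bad $m$, a constant depending only on $N$, $\alpha$, $\beta$.

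On the other hand, the interval $[n\alpha,n\beta]$ contains at least $n(\beta-\alpha)-1$ integers, which tends to infinity with $n$. Choosing $N_0$ so that $n(\beta-\alpha)-1 > C$ for every $n \geq N_0$ guarantees the existence of a good $m$, and the corresponding pair $(p,q)$ fulfils all the requirements of the lemma.

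I do not foresee any significant obstacle: the argument is entirely elementary and avoids any analytic or sieve-theoretic input. The only small points that need care are the divisor-pairing step (ensuring the count of bad divisors is $\leq N-1$) and the edge case $m = 0$ when $0 \in [n\alpha,n\beta]$, for which $\gcd(0,n) = n > n/N$ when $N \geq 2$, so $m=0$ is automatically classified as bad and absorbed into the constant $C$.
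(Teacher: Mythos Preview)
Your proof is correct and takes a genuinely different route from the paper's. The paper proceeds constructively: it fixes once and for all a rational $a/b \in [\alpha,\beta)$ in lowest terms, then for each $n$ adjusts the numerator by some $r \in \{1,\ldots,b\}$ so that $b$ divides $na+r$; the resulting fraction $(na+r)/(bn)$ lies in $[\alpha,\beta]$ and, via B\'ezout, reduces to a $p/q$ with $q \geq n/b$ and $q \mid n$, whence $N_0 = \max(M, bN)$ works. Your argument instead counts: among all integers $m \in [n\alpha,n\beta]$, only a bounded number (independent of $n$) can have $\gcd(m,n) > n/N$, because each such $m$ is a multiple of one of at most $N-1$ large divisors of $n$, and each large divisor contributes fewer than $N(\beta-\alpha)+1$ multiples to the interval. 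Since the interval contains roughly $n(\beta-\alpha)$ integers, a good $m$ must exist for large $n$. The paper's approach yields an explicit candidate fraction and a slightly more transparent value of $N_0$; yours is a clean pigeonhole argument that avoids the B\'ezout step entirely and makes clear that in fact \emph{most} choices of $m$ work.
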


\begin{proof}
We fix a rational $\tfrac{a}{b}\in [\alpha,\beta)$ with $a,b$ coprime
and $b>0$, and $M$ a positive integer such that
$\tfrac{a}{b}+\tfrac{1}{M}\in [\alpha,\beta]$. Let $n\geq M$. There
exists $r\in\{1,\ldots,b\}$ such that $b$ divides $na+r$. Then
$\tfrac{a}{b}+\tfrac{r}{bn} = \tfrac{na+r}{bn}$ belongs to
$[\alpha,\beta]$ because $\tfrac{r}{bn} \le \tfrac{1}{M}$. Since
$(na+r)b-(bn)a = br$, B\'ezout's theorem implies that $\gcd(na+r,bn)$
divides $br\neq 0$. Thus we can write $\tfrac{na+r}{bn} =
\tfrac{p}{q}$ with $p,q$ coprime and
\[
q = \frac{bn}{\gcd(na+r, bn)} \ge \frac{bn}{br} \ge \frac{n}{b}.
\]
Moreover, $\tfrac{na+r}{bn} = \tfrac{(na+r)/b}{n}$ because $b$ divides
$na+r$, and hence $q$ divides $n$. Consequenly, the lemma holds by
taking $N_0=\max (M,bN)$.
\end{proof}

\begin{coro}\label{cor:N-Per(F)}
Let $T\in\InfX$ and $F\in\Li$. If $\RotR(F)$ is not degenerate to a
point, then the set $\IN\setminus \PerR(F)$ is finite.
\end{coro}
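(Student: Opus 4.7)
The plan is to combine Theorem~\ref{theo:set-of-periods} with the arithmetical Lemma~\ref{lem:BA} in a direct way. Since $\RotR(F)$ is a compact interval (by Theorem~\ref{theo:RR-closed-interval}) that is non-degenerate, its interior is a non-empty open interval, so I can pick any two rationals $\alpha < \beta$ lying in $\Int(\RotR(F))$.

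With $\alpha,\beta$ fixed, Theorem~\ref{theo:set-of-periods} supplies a positive integer $N$ with the following strong property: every coprime fraction $p/q\in[\alpha,\beta]$ satisfying $q\geq N$ has $\PerR(p/q,F)=q\IN$. This is the mechanism that will let me realise arbitrary large periods; the role of $N$ is essentially to guarantee that $\chi(N/q)=1$ when $q\geq N$, so the ``large enough multiplier'' $m$ in Theorem~\ref{theo:set-of-periods} drops down to $m=1$ and every multiple of $q$ becomes a period.

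Next I apply Lemma~\ref{lem:BA} to the same interval $[\alpha,\beta]$ and to the same integer $N$: it produces a threshold $N_0$ such that for every $n\geq N_0$ one can find a coprime representation $p/q\in[\alpha,\beta]$ with $q\geq N$ and $q\mid n$. Combining the two ingredients for any such $n$: write $n=mq$ with $m\in\IN$; since $q\geq N$, Theorem~\ref{theo:set-of-periods} yields $n=mq\in \PerR(p/q,F)\subset \PerR(F)$. Hence $\{n\in\IN:n\geq N_0\}\subset \PerR(F)$, which gives $\IN\setminus\PerR(F)\subset\{1,2,\dots,N_0-1\}$, a finite set.

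There is essentially no obstacle here beyond carefully quoting the two previous results; the only point that needs a moment of thought is checking that Lemma~\ref{lem:BA} can be applied with the same $N$ that Theorem~\ref{theo:set-of-periods} produces from $[\alpha,\beta]$, so that the denominator $q$ it delivers automatically lies in the regime where Theorem~\ref{theo:set-of-periods} gives the full arithmetic progression $q\IN$ rather than only its tail.
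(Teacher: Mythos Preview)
Your argument is correct and follows exactly the same route as the paper: choose $\alpha<\beta$ in $\Int(\RotR(F))$, take the $N$ from Theorem~\ref{theo:set-of-periods}, feed it into Lemma~\ref{lem:BA} to get $N_0$, and conclude that every $n\ge N_0$ lies in $\PerR(p/q,F)=q\IN$ for some suitable $p/q$. The only cosmetic difference is that you insist $\alpha,\beta$ be rational, which is harmless but unnecessary.
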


\begin{proof}
Let $N$ be the positive integer given by
Theorem~\ref{theo:set-of-periods} for some $\alpha,\beta\in
\Int(\RotR(F))$, $\alpha<\beta$. By Lemma~\ref{lem:BA}, there exists
an integer $N_0$ such that, for all $n\geq N_0$, there exists
$\frac{p}{q}\in [\alpha,\beta]$ with $p,q$ coprime, such that $q\geq
N$ and $q$ divides $n$. According to
Theorem~\ref{theo:set-of-periods}, $\PerR(p/q,F) = q\IN\ni n$.
Hence $\PerR(F)$ contains all integers $n\geq N_0$.
\end{proof}

\section{Combed maps}\label{sec:circular}

The aim of this section is to show that the rotation set of all maps
from a special subclass of $\Li$ (with $T\in\InfX$), called
\emph{combed maps}, has nice properties analogous to the ones
displayed by the continuous circle maps. To do this we will extend the
notions of ``lower'' and ``upper'' lifting and ``water functions'' in
the spirit of \cite[Section~3.7]{ALM} to this setting.

In the rest of this section $T$ will denote a space from $\InfX$.

\subsection{General definitions for combed maps}
We start our task with the simple observation that, for each $x,y \in
T$, the relation $r(x) \le r(y)$ defines a linear pre-ordering on $T$
which, in what follows, will be denoted by $x \preccurlyeq y$ (we
recall that a pre-ordering is a reflexive, transitive relation). We
will also use the notation $x \prec y$ to denote $r(x) < r(y)$.

\begin{defi}
A map $F \in \Li$ such that $F(x) \preccurlyeq F(y)$ whenever $x
\preccurlyeq y$ will be called \emph{non-decreasing}. Also, given $F,G
\in \Li$ we write $F \preccurlyeq G$ to denote that $F(x) \preccurlyeq
G(x)$ for each $x \in T$.
\end{defi}

\begin{rema}\label{non-decreasing}
When $F$ is non-decreasing and $r(x) = r(y)$, then it easily follows
that $r(F(x)) = r(F(y))$. Notice also that the map $r \in \Li$ is
non-decreasing.
\end{rema}

The following simple lemma follows in a similar way to
\cite[Lemma~3.7.19]{ALM} (and hence we omit its proof).

\begin{lemm}\label{rem:order-iterated}
Assume that $F,G \in \Li$, $F \preccurlyeq G$ and either $F$ or $G$ is
non-decreasing. Then $F^n \preccurlyeq G^n$ for all $n \in \IN$.
\end{lemm}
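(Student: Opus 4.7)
The plan is to prove the statement by induction on $n$, splitting into two cases according to whether $F$ or $G$ is the non-decreasing map. The base case $n=1$ is exactly the hypothesis $F \preccurlyeq G$. The pre-ordering $\preccurlyeq$ is transitive (it pulls back the usual order on $\IR$ via the continuous retraction $r$), so chains of $\preccurlyeq$-inequalities compose freely, which will be the only algebraic fact we use.

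For the inductive step, assume $F^n \preccurlyeq G^n$ and fix $x \in T$. First suppose $F$ is non-decreasing. Then from the inductive hypothesis $F^n(x) \preccurlyeq G^n(x)$ and the monotonicity of $F$, I get
\[
F^{n+1}(x) = F(F^n(x)) \preccurlyeq F(G^n(x)).
\]
Applying the hypothesis $F \preccurlyeq G$ at the point $G^n(x)$ yields $F(G^n(x)) \preccurlyeq G(G^n(x)) = G^{n+1}(x)$, and transitivity concludes this case. Now suppose instead that $G$ is non-decreasing. By $F \preccurlyeq G$ applied at $F^n(x)$, we have $F^{n+1}(x) = F(F^n(x)) \preccurlyeq G(F^n(x))$, and by the inductive hypothesis together with the monotonicity of $G$,
\[
G(F^n(x)) \preccurlyeq G(G^n(x)) = G^{n+1}(x).
\]
Transitivity again completes the step.

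There is no serious obstacle; the only thing one has to be careful about is to insert $F$ or $G$ in the middle of the chain in the right order so that we use the monotonicity of whichever of the two maps is assumed non-decreasing. Because we do not assume both maps are non-decreasing, we cannot freely mix iterates, and this is precisely why the two-case split is necessary. The proof mirrors \cite[Lemma~3.7.19]{ALM}, the only difference being that $\preccurlyeq$ is a pre-ordering on $T$ rather than the total order on $\IR$; since the argument uses only transitivity and the definitions, no further adaptation is required.
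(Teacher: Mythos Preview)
Your proof is correct and is precisely the standard induction argument the paper has in mind: the paper omits the proof entirely, pointing to \cite[Lemma~3.7.19]{ALM}, and your two-case split according to which map is non-decreasing is exactly that argument transported to the pre-ordering $\preccurlyeq$.
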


Next we define the \emph{upper} and \emph{lower} maps that, as in the
circle case, will play a key role in the study of the rotation
interval of maps from $\Li$. Given $F \in \Li$, we define
$\map{F_l,F_u}{\IR}$ by
\begin{align*}
F_u(x) & :=  \sup \set{r(F(y))}{y \preccurlyeq x} ,\\
F_l(x) & :=  \inf \set{r(F(y))}{y \succcurlyeq x}.
\end{align*}

\begin{rema}\label{rem:up-low-finint}
The following equivalent definitions for the maps $F_u$ and $F_l$
hold:
\begin{align*}
F_u(x) & =  \max \set{r(F(y))}{x-1 \preccurlyeq y \preccurlyeq x} ,\\
F_l(x) & =  \min \set{r(F(y))}{x+1 \succcurlyeq  y \succcurlyeq x}.
\end{align*}
To prove the above equalities we have to show that
\[
\sup \set{r(F(y))}{y \preccurlyeq x} = M:=
\max \set{r(F(y))}{x-1 \preccurlyeq y \preccurlyeq x}
\]
(we only prove the statement for $F_u$; the other one follows
analogously). Since the map $r \circ F$ is continuous,
\[
 \sup \set{r(F(y))}{y \preccurlyeq x} = \max\Bigl\{
    \sup \set{r(F(y))}{y \preccurlyeq x-1}, M \Bigr\}.
\]
Thus, it is enough to see that $r(F(y)) \le M$ for all $y \preccurlyeq
x-1$. If, on the contrary, there exists $z \preccurlyeq x-1$ such that
$r(F(z)) > M$, then there exists $k \in \IN$ such that $x-1
\preccurlyeq z+k \preccurlyeq x$ and, by Lemma~\ref{lem:degree1-Fn}(d)
\[
 r(F(z+k)) = r(F(z)) + k > M + k  >
     \max \set{r(F(y))}{x-1 \preccurlyeq y \preccurlyeq x};
\]
a contradiction.
\end{rema}

Now we introduce the notions of combed maps.

\begin{defi}
A map $F \in \Li$ will be called \emph{left-combed} (respectively
\emph{right-combed}) \emph{at $x \in \IR$} if $r \circ F
(\set{y\in\IR}{y \le x}) \supset r \circ F (r^{-1}(x))$ (respectively
$r \circ F (\set{y \in\IR}{y \ge x}) \supset r \circ F (r^{-1}(x))$).
If $F$ is both left-combed and right-combed at $x$ then it will be
simply called \emph{combed at $x$} (see Figure~\ref{fig:combex} for an
example). The map $F$ will be called \emph{combed} if it is combed at
every point $x\in \IR$.
\end{defi}

\begin{rema}
If $x \notin \B(T)$ (recall that $\B(T)$ denotes the set of all
branching points of $T$), then $r^{-1}(x) = \{x\}$. Therefore,  $F$ is
combed at $x$.
\end{rema}

\begin{figure}[htb]
\centerline{\includegraphics[width=25pc]{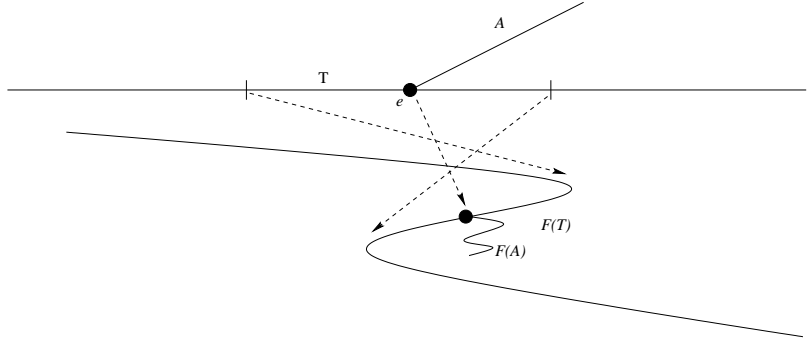}}
\caption{The image of the branch $A$ gets ``hidden'' inside $F(\IR)$
and thus $F$ is combed at $e$ (actually, $F(\IR)$ is in $T$, and the
figure shows how it folds up). An observer looking at $F(T)$ from
above or below does not distinguish this map from a ``pure circle
map''.\label{fig:combex}}
\end{figure}

\subsection{A characterisation of the upper and lower map for
combed maps}

The following technical lemma gives a nice characterisation of the
maps $F_u$ and $F_l$ for combed maps.

\begin{lemm}\label{lem:combed}
For any map $F \in \Li$ and $x \in \IR$ the following statements hold:
\begin{enumerate}
\item If $F$ is left-combed at all $y \in \IR$ such that $y \le x$,
then
\[
F_u(x) =  \sup \set{r(F(y))}{y \in \IR \text{ and } y \le x}.
\]

\item If $F$ is right-combed at all $y \in \IR$ such that $y \ge x$,
then
\[
F_l(x) =  \inf \set{r(F(y))}{y \in \IR \text{ and } y \ge x}.
\]
\end{enumerate}
\end{lemm}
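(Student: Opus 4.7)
The plan is to unpack the definitions and observe that the only difference between the assertion and the definition of $F_u$ (respectively $F_l$) is the domain over which we take the supremum (respectively infimum): the definition allows $y$ to range over all of $T$ with $r(y)\le x$, whereas the statement restricts to $y\in\IR$. So the task is to show that the supremum (respectively infimum) does not change after this restriction, and the combed hypothesis is exactly what we need.

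I will prove~(a); part~(b) is completely symmetric. Since $x\in\IR$ we have $r(x)=x$, so by definition
\[
F_u(x)=\sup\set{r(F(y))}{y\in T,\ r(y)\le x},
\]
and obviously
\[
S':=\sup\set{r(F(y))}{y\in\IR,\ y\le x}\ \le\ F_u(x),
\]
since $\IR\subset T$ and $r\evalat{\IR}=\Id$. For the reverse inequality, let $y\in T$ with $r(y)\le x$ and set $z:=r(y)\in\IR$. Since $z\le x$, the hypothesis gives that $F$ is left-combed at $z$, that is,
\[
r\circ F\bigl(\set{w\in\IR}{w\le z}\bigr)\supset r\circ F(r^{-1}(z)).
\]
Because $y\in r^{-1}(z)$, this inclusion yields a point $y'\in\IR$ with $y'\le z\le x$ and $r(F(y'))=r(F(y))$. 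Hence $r(F(y))\le S'$, and taking the supremum over all admissible $y$ gives $F_u(x)\le S'$.

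The argument for~(b) is identical, with inf in place of sup, the right-combed condition $r\circ F(\set{w\in\IR}{w\ge z})\supset r\circ F(r^{-1}(z))$ replacing the left-combed one, and the inequality $y'\ge z\ge x$ replacing $y'\le z\le x$. No step is really an obstacle: the only subtlety worth flagging is simply to pick the correct point at which to apply the combed hypothesis, namely $z=r(y)$, so that $y$ lies in $r^{-1}(z)$ and the inclusion defining left-combedness can be invoked.
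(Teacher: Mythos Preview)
Your proof is correct and follows essentially the same approach as the paper: both use the left-combed hypothesis at the point $z=r(y)$ to show that every value $r(F(y))$ with $y\preccurlyeq x$ is already attained by some $y'\in\IR$ with $y'\le x$. The paper phrases this via the decomposition $\set{y\in T}{y\preccurlyeq x}=\set{y\in\IR}{y\le x}\cup\bigcup_{z\in\B(T),\,z\le x}r^{-1}(z)$ and then applies left-combedness only at branching points, whereas you apply it uniformly at $z=r(y)$ for every $y$; but this is a cosmetic difference, since for $y\in\IR$ the step is trivial.
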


\begin{proof}
We will only prove statement (a). The proof of (b) is analogous.

Clearly,
\[
 \set{y \in T}{y \preccurlyeq x} = \set{y \in \IR}{y \le x} \cup
    \left(\hspace*{-1.3em}\bigcup_{\hspace*{1.5em}\begin{subarray}{l}
      z \in \B(T) \\ z \le x
    \end{subarray}} r^{-1}(z)\right).
\]
Then, since $F$ is left-combed at all $y \in \IR$ such that $y \le
x$, we get
\[
 r \circ F (r^{-1}(z)) \subset r \circ F (\set{y\in\IR}{y \le z})
                       \subset r \circ F (\set{y\in\IR}{y \le x})
\]
for all $z \in \B(T)$, $z \le x$.
Consequently,
\[
 F_u(x) = \sup \set{r(F(y))}{y \preccurlyeq x}
        = \sup \set{r(F(y))}{y \in \IR \text{ and } y \le x}.
\]
\end{proof}

\begin{rema}
As in Remark~\ref{rem:up-low-finint} it follows that if $F$ is
left-combed at all $y \in \IR$ such that $y \le x$, then
\[
 F_u(x) =  \max \set{r(F(y))}{y \in \IR \text{ and } x-1 \le y \le x}
\]
and if $F$ is right-combed at all $y \in \IR$ such that $y \ge x$,
then
\[
 F_l(x) =  \min \set{r(F(y))}{y \in \IR \text{ and } x+1 \ge y \ge x}.
\]
\end{rema}

The next result studies the basic properties of the maps $F_l$ and
$F_u$.

\begin{lemm}\label{lem:FlFu}
For each $F \in \Li$ the maps $F_l$ and $F_u$ are non-decreasing
liftings of (non necessarily continuous) degree one circle maps that
satisfy:
\begin{enumerate}
\item $F_l(x) \preccurlyeq F(y) \preccurlyeq F_u(x)$ for each $x \in
\IR$ and $y \in r^{-1}(x)$.

\item If $G \in \Li$ verifies $F\preccurlyeq G$, then $F_l \le G_l$
and $F_u \le G_u$.

\item If $F$ is non-decreasing, then $ F_u = F_l = \Fcr = r \circ
F\evalat{\IR}$. Moreover,
\[
 \set{x \in \IR}{ r(F(x)) \ne F(x)} \subset \Const(F_u) = \Const(F_l).
\]

\item The map $F_u$ is continuous from the right whereas $F_l$ is
continuous from the left.

\item If $F$ is left-combed (respectively right-combed) at $x \in \IR$
then $F_u$ (respectively $F_l$) is continuous at $x$. In particular,
$F_u$ and $F_l$ are continuous in $\IR\setminus\B(T)$.

\item If $F_u$ (respectively $F_l$) is discontinuous at some $x \in
\IR$, then $x \in \B(T)$ and there exists $\varepsilon > 0$ such that
$[x,x+\varepsilon] \subset \Const(F_u)$ (respectively
$[x-\varepsilon,x] \subset \Const(F_l)$).
\end{enumerate}
\end{lemm}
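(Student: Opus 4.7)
My approach is to prove the six statements sequentially, with the bulk of the work reserved for parts~(d)--(f). The key analytic ingredient throughout is that $r^{-1}([a,b])$ is compact for any bounded interval $[a,b]$, which follows from Definition~\ref{def:liftedspace}(iii) together with the compactness of each branch closure.

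First, I would record the structural observation that both $F_u$ and $F_l$ are non-decreasing lifts of degree one: monotonicity is immediate from that of $\sup$ and $\inf$, while the identity $F_u(x+1)=F_u(x)+1$ (and similarly for $F_l$) follows from the substitution $y=z+1$ combined with Lemma~\ref{lem:degree1-Fn}(d). Part~(a) is then immediate because any $y\in r^{-1}(x)$ satisfies both $y\preccurlyeq x$ and $y\succcurlyeq x$, so $r(F(y))$ participates in both defining sets. Part~(b) follows from monotonicity of $\sup$ and $\inf$ applied to $r(F(y))\le r(G(y))$. For~(c), if $F$ is non-decreasing then $y\preccurlyeq x$ forces $r(F(y))\le r(F(x))$, so $F_u(x)=F_l(x)=r(F(x))=\Fcr(x)$; the inclusion into $\Const$ uses Lemma~\ref{lem:const-r}(a) to find a neighborhood of $F(x)$ on which $r$ is constant when $F(x)\notin\IR$, and then continuity of $F$ to pull this back to a neighborhood of $x$.

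For~(d), to prove right-continuity of $F_u$ at $x$, I use the max form from Remark~\ref{rem:up-low-finint}. Given $t_n\downarrow x$ I pick maximizers $y_n\in r^{-1}([t_n-1,t_n])$, extract a convergent subsequence $y_{n_k}\to y_*$ using compactness of $r^{-1}([x-1,t_1])$, observe that $r(y_*)\le x$ so $y_*\preccurlyeq x$, and invoke continuity of $r\circ F$ to get $\lim F_u(t_n)=r(F(y_*))\le F_u(x)$, forcing equality with the opposite inequality from monotonicity; left-continuity of $F_l$ is symmetric. For~(e), assuming $F$ is left-combed at $x$, I establish left-continuity of $F_u$ at $x$. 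Take $y_0$ realizing $F_u(x)=r(F(y_0))$ with $x-1\preccurlyeq y_0\preccurlyeq x$. If $r(y_0)<x$ then $y_0\preccurlyeq t_n$ eventually and $F_u(t_n)\ge F_u(x)$ immediately. If $r(y_0)=x$, left-combedness at $x$ produces $y_1\in\IR$ with $y_1\le x$ and $r(F(y_1))=F_u(x)$: the sub-case $y_1<x$ is as before, and $y_1=x$ is handled by continuity of $r\circ F$ along $\IR$ since $F_u(t_n)\ge r(F(t_n))\to r(F(x))=F_u(x)$. The ``in particular'' clause is automatic since $x\notin\B(T)$ forces $r^{-1}(x)=\{x\}$, trivially combed.

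The main obstacle is~(f), which requires careful local geometry at the branching point. By contrapositive of~(e) any discontinuity forces $x\in\B(T)$; by~(d) the jump is from the left, so $\lim_{t\uparrow x}F_u(t)<F_u(x)$. Rerunning the case analysis of~(e) shows that the maximizer $y_0$ cannot lie in $\IR$: otherwise $y_0=x$ and then continuity of $r\circ F$ on $\IR$ together with $F_u(t)\ge r(F(t))$ would contradict the jump. Hence $r(F(x))<F_u(x)$. Set $\delta=F_u(x)-r(F(x))>0$. Using continuity of $r\circ F$ on $\IR$ at $x$ and Definition~\ref{def:liftedspace}(iii) (finiteness of branches attached in $[x,x+1]$), I choose $\varepsilon\in(0,1)$ small enough that no branch attaches in $(x,x+\varepsilon]$ and that $r(F(y))\le F_u(x)-\delta/2$ for every $y\in[x,x+\varepsilon]\subset\IR$. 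For $t\in[x,x+\varepsilon]$ I then decompose
\[
r^{-1}([t-1,t]) \;=\; r^{-1}([t-1,x))\;\cup\; r^{-1}(x)\;\cup\;(x,t],
\]
and bound $F_u(t)$ piece by piece: the first set is contained in $r^{-1}([x-1,x])$, so its contribution is at most $F_u(x)$; the middle piece attains $F_u(x)$ at $y_0$; and the third contributes strictly less than $F_u(x)$ by the choice of $\varepsilon$. Hence $F_u$ is constant equal to $F_u(x)$ on $[x,x+\varepsilon]$, which yields the claim $[x,x+\varepsilon]\subset\Const(F_u)$. The statement for $F_l$ follows by the symmetric argument on the left of $x$.
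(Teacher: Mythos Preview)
Your proof is correct and reaches the same conclusions as the paper's, but the organization of parts~(d)--(f) differs in a way worth noting. For~(d), you extract a convergent subsequence of maximizers via compactness of $r^{-1}([x-1,t_1])$, whereas the paper computes directly by writing
\[
F_u(x+\delta)=\max\Bigl\{F_u(x),\ \sup_{x\preccurlyeq y\preccurlyeq x+\delta} r(F(y))\Bigr\}
\]
and letting $\delta\searrow 0$. For~(e) and~(f), the paper first isolates a single continuity criterion: $F_u$ is continuous at $x$ if and only if $\max\bigl\{r\circ F(r^{-1}(x))\bigr\}\le \sup\{r(F(y)):y\prec x\}$. It then verifies this inequality under left-combedness for~(e) and exploits its failure to produce the point $z\in r^{-1}(x)\setminus\{x\}$ for~(f). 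You instead argue by case analysis on a chosen maximizer $y_0$, rerun separately in~(e) and~(f). The paper's route makes the logical link between~(e) and~(f) more transparent (they are the two sides of one inequality), while your maximizer approach is more hands-on but involves repeating the case split. Both rest on the same facts: isolatedness of branching points and continuity of $r\circ F$ along $\IR$.

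One minor point to tighten: in~(d) your subsequence argument yields $\lim_k F_u(t_{n_k})=F_u(x)$, and you should explicitly invoke monotonicity of $F_u$ (so that $F_u(t_n)$ is non-increasing, hence convergent) to pass from the subsequence to the full sequence $t_n\downarrow x$.
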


\begin{proof}
As in the previous lemma, we will only consider the map $F_u$. The
proof for $F_l$ is analogous.

Let $x, z \in \IR$ be such that $x \le z$. We have
\[
  r\circ F (\set{y}{y \preccurlyeq x}) \subset
  r\circ F (\set{y}{y \preccurlyeq z}).
\]
So, $F_u(x) \le F_u(z)$. On the other hand, by
Lemma~\ref{lem:degree1-Fn}(d),
\begin{align*}
F_u(x+1) &= \sup \set{r(F(y))}{y \preccurlyeq x+1}
          = \sup \set{r(F(z+1))}{z \preccurlyeq x} \\
         &= \sup \set{r(F(z))+1}{z \preccurlyeq x} = F_u(x) + 1.
\end{align*}
Thus, $F_u$ is non-decreasing and has degree one.

To prove (a) observe that $F(y) \preccurlyeq F_u(x)$ is equivalent to
$r(F(y)) \le F_u(x)$ which, in turn, is equivalent to $r(F(y)) \in
\set{r(F(z))}{z \preccurlyeq x}$. On the other hand,  $y \in
r^{-1}(x)$ implies that $y \preccurlyeq x$ and this last statement
implies $r(F(y)) \in \set{r(F(z))}{z \preccurlyeq x}$. So, (a) holds.
Statements (b) and (c) follow immediately from the definitions,
Remark~\ref{non-decreasing} and Lemma~\ref{lem:Const}.

To prove (d) take $x \in \IR$ and $\delta > 0$. We have
\[
  F_u(x+\delta) =\max \left\{ F_u(x),
    \sup \set{r(F(y))}{x\preccurlyeq y \preccurlyeq x+\delta}
  \right\}.
\]
Notice that,
\begin{align*}
\lim_{\delta\searrow 0} \bigl( \sup \set{r(F(y))}{x \preccurlyeq
y \preccurlyeq x+\delta} \bigr)
 & = \sup \set{r(F(y))}{r(y) = x} \\
 & \le \sup \set{r(F(y))}{y \preccurlyeq x} = F_u(x).
\end{align*}
Consequently, $\lim_{\delta\searrow 0} F_u(x+\delta) = F_u(x)$.

To prove (e) and (f) notice that, since $r \circ F$ is continuous
and $r^{-1}(x)$ is compact,
\begin{align*}
 F_u(x)  & = \sup \set{r(F(y))}{y \preccurlyeq x} \\
         & = \max \left\{
               \sup \set{r(F(y))}{y \prec x},
               \max \Bigl\{r \circ F\bigl(r^{-1}(x)\bigr)\Bigr\}
           \right\}.
\end{align*}
Now observe that since the points from $B(T)$ are isolated, if
$\delta > 0$ is small enough then
$[x-\delta,x) \cap B(T) \neq \emptyset$, and thus
$
 \sup \set{r(F(y))}{y \preccurlyeq x-\delta}
$
varies continuously with $\delta$. Consequently,
\[
  \lim_{\delta\searrow 0} \bigl(
      \sup \set{r(F(y))}{y \preccurlyeq x-\delta}
  \bigr) = \lim_{\delta\searrow 0} F_u(x-\delta)
\]
exists and coincides with $\sup \set{r(F(y))}{y \prec x}$. In summary,
\[
F_u(x)= \max \left\{
  \lim_{\delta\searrow 0} F_u(x-\delta),
  \max \Bigl\{r \circ F\bigl(r^{-1}(x)\bigr)\Bigr\}
\right\}
\]
and hence, in view of (d), the continuity of $F_u$ at $x$ is
equivalent to
\begin{equation}\label{cont-Fu}
\max \Bigl\{r \circ F\bigl(r^{-1}(x)\bigr)\Bigr\}
  \le \lim_{\delta\searrow 0} F_u(x-\delta)
  = \sup \set{r(F(y))}{y \prec x}.
\end{equation}

Since $F$ is left-combed at $x$ we have,
\[
 r \circ F (r^{-1}(x)) \subset r \circ F (\set{y\in\IR}{y \le x})
 \subset r \circ F (\set{y}{y \prec x} \cup \{x\}),
\]
which gives \eqref{cont-Fu} by the continuity of $r \circ F$. This
ends the proof of (e).

To prove (f) assume that the map $F_u$ is discontinuous at $x \in
\IR$. Then, from \eqref{cont-Fu} it follows that $r^{-1}(x) \ne \{x\}$
and there exists $z \in r^{-1}(x) \setminus \{x\}$ such that
\[
 r(F(z)) > \sup \set{r(F(y))}{y \prec x} \ge r(F(x)).
\]
In particular, $x \in \B(T)$ and, by continuity, there exists
$\varepsilon > 0$ such that $\B(T) \cap (x,x+\varepsilon] = \emptyset$
and $r(F(y)) < r(F(z))$ for all $y \in (x,x+\varepsilon]$. For all
such points $y$ we have
\[
F_u(y) = \sup \set{r(F(y'))}{y' \preccurlyeq y}
       = \sup \set{r(F(y'))}{y' \preccurlyeq x}
       = F_u(x).
\]
This ends the proof of the lemma
\end{proof}

\begin{rema}
According to Lemma~\ref{lem:FlFu}(e), if $F$ is left-combed at
$x\in\IR$ then $F_u$ is continuous at $x$. The converse is not true.
From the proof of statements (e) and (f) of this lemma it easily
follows that if $F_u$ is continuous at some $x \in \IR$ but $F$ is
\emph{not} left-combed at $x$ (and, hence, $x \in \B(T)$), then there
exists a point $z \in \B(T)$, $z < x$ such that $F_u$ is also not
left-combed at $z$ and
\[
 \max \Bigl\{r \circ F\bigl(r^{-1}(x)\bigr)\Bigr\} \le
 \max \Bigl\{r \circ F\bigl(r^{-1}(z)\bigr)\Bigr\}.
\]
Iterating this process if necessary, one can find a point $z'\in
\B(T)$, $z'<x$, such that $F_u$ is not continuous at $z'$. Therefore,
$F_u$ is continuous if and only if $F$ is left-combed at all
$x\in\IR$.

Similar statements with reverse inequalities hold for right-combed and
$F_l$.
\end{rema}

\begin{defi}\label{def:rotFlFu}
The fact that the maps $F_l$ and $F_u$ are non-decreas\-ing implies
\cite[Theorem~1]{RT} that $\rhos{F_l}(x)$ and $\rhos{F_u}(x)$ exist
for each $x \in \IR$ and are independent of the choice of the point
$x$. These two numbers will be denoted by $\rho(F_l)$ and $\rho(F_u)$
respectively.
\end{defi}

\subsection{Rotation sets and water functions for combed maps}
The main goal of this subsection (Theorem~\ref{theo:rotset}) is to
show that, as in the case of circle maps, for combed maps the rotation
set is a closed interval of the real line. This is achieved with the
help of the so called \emph{water functions} that we extend from the
circle maps to the setting of combed maps from $\Li$.

As a consequence of Definition~\ref{def:rotFlFu} and
Lemma~\ref{lem:FlFu} one obtains:

\begin{coro}\label{lem:RUpperBound}
For each $F \in \Li$ it follows that $\rho(F_l) \le \rho(F_u)$,
$\Rot^-(F) \subset [\rho(F_l), \rho(F_u)]$, $\Rot^+(F) \subset
[\rho(F_l), \rho(F_u)]$  and, consequently, $\Rot(F) \subset
[\rho(F_l), \rho(F_u)]$.
\end{coro}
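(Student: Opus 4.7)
The plan is to derive everything from the pointwise sandwich $F_l(x) \preccurlyeq F(y) \preccurlyeq F_u(x)$ (for $y \in r^{-1}(x)$) in Lemma~\ref{lem:FlFu}(a), combined with the monotonicity of $F_l$ and $F_u$.

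First I would establish $\rho(F_l) \le \rho(F_u)$. Applying Lemma~\ref{lem:FlFu}(a) at $y = x \in \IR$ and reading the resulting relation inside $\IR$, we get $F_l(x) \le F_u(x)$ for every $x \in \IR$, i.e.\ $F_l \le F_u$ on $\IR$. Since both $F_l$ and $F_u$ are non-decreasing liftings of degree one by Lemma~\ref{lem:FlFu}, Lemma~\ref{rem:order-iterated} (applied inside $\IR$, viewed as a subset of $T$) gives $F_l^n \le F_u^n$ for every $n \in \IN$. Dividing by $n$ and letting $n\to\infty$, Definition~\ref{def:rotFlFu} yields $\rho(F_l)\le\rho(F_u)$.

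Next I would prove the sandwich $F_l^n(r(y)) \le r(F^n(y)) \le F_u^n(r(y))$ for every $y \in T$ and every $n \ge 0$ by induction on $n$. The base $n=0$ is trivial. For the inductive step, set $x=r(F^{n-1}(y))$ and apply Lemma~\ref{lem:FlFu}(a) to the point $F^{n-1}(y) \in r^{-1}(x)$ to obtain
\[
F_l(r(F^{n-1}(y))) \le r(F^n(y)) \le F_u(r(F^{n-1}(y))).
\]
Using the induction hypothesis together with the fact that $F_u$ and $F_l$ are non-decreasing on $\IR$, the outer terms are bounded above, respectively below, by $F_u^n(r(y))$ and $F_l^n(r(y))$, closing the induction.

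Finally, dividing the sandwich by $n$ and passing to $\limsup$ and $\liminf$ gives
\[
\rho(F_l) \le \urhos(y) \le \orhos(y) \le \rho(F_u)
\]
for every $y \in T$, where the outer equalities come from the fact that for the non-decreasing liftings $F_l$ and $F_u$ the rotation number is independent of the starting point (Definition~\ref{def:rotFlFu}). This immediately yields $\Rot^-(F)\subset [\rho(F_l),\rho(F_u)]$, $\Rot^+(F)\subset[\rho(F_l),\rho(F_u)]$, and (by restricting to points where upper and lower rotation numbers coincide) $\Rot(F)\subset[\rho(F_l),\rho(F_u)]$.

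I do not expect any real obstacle here: the one thing to be careful about is that $F(y)$ need not belong to $\IR$, which is precisely why we must work with $r \circ F^n$ rather than $F^n$, and why Lemma~\ref{lem:FlFu}(a) is phrased using $\preccurlyeq$. Once that is properly bookkept, the induction and the limit argument are mechanical.
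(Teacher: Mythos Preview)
Your proposal is correct and follows essentially the same route as the paper. The only difference is presentational: the paper rewrites Lemma~\ref{lem:FlFu}(a) as $F_l\circ r \preccurlyeq F \preccurlyeq F_u\circ r$ on $T$, observes that $F_l\circ r$ and $F_u\circ r$ are non-decreasing, and then invokes Lemma~\ref{rem:order-iterated} once to get $(F_l)^n\circ r \preccurlyeq F^n \preccurlyeq (F_u)^n\circ r$ directly (using that $(F_l\circ r)^n = F_l^n\circ r$ since $F_l$ maps into $\IR$), whereas you unroll that same induction by hand. Your separate derivation of $\rho(F_l)\le\rho(F_u)$ is also unnecessary, since it already follows from the sandwich by taking $y\in\IR$.
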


\begin{proof}
By a rewriting of Lemma~\ref{lem:FlFu}(a) we have $F_l\circ r (y)
\preccurlyeq F(y) \preccurlyeq F_u\circ r (y)$ for each $y \in T$.
From Remark~\ref{non-decreasing} and Lemma~\ref{lem:FlFu} it follows
that $F_l \circ r$ and $F_u \circ r$ are non-decreasing. Hence, since
$F_l$ and $F_u$ are self maps of $\IR$, by
Lemma~\ref{rem:order-iterated},
\[
 (F_l)^n \circ r (y) = (F_l \circ r)^n (y) \preccurlyeq F^n(y)
 \preccurlyeq (F_u \circ r)^n (y) = (F_u)^n \circ r (y)
\]
for each $n \in \IN.$ Consequently,
\[
    \frac{(F_l)^n(r(y)) - r(y)}{n}
\le \frac{r(F^n(y)) - r(y)}{n}
\le \frac{(F_u)^n(r(y)) - r(y)}{n}
\]
for each $y \in T$ and $n \in \IN.$ Then the corollary follows from
the fact that  $\rho(F_l) = \rhos{F_l}(x)$ and $\rho(F_u) =
\rhos{F_u}(x)$ for all $x \in \IR$.
\end{proof}

\medskip
In what follows we need to introduce a distance in $\Li$. We will use
the usual one, namely the $\sup$ distance, which gives the topology of
the uniform convergence. But to do this we need to specify before the
distance that we will use in $T$.

\begin{defi}\label{def:almosttaxicab}
Assume that the metric space $T$ is endowed with a $\tau$-invariant
distance $\delta_T$ (that is, for all $x,y\in T, \delta_T(x+1,y+1)=
\delta_T(x,y)$). In this paper, instead of this distance we will use
the distance $\nu$ defined as follows in the spirit of the taxicab
metric (although such a metric, in general, cannot be defined in
lifted spaces).
Given $x,y \in T$ we set\newline\hspace*{3em}
$\nu(x,y) := \delta_T(x,y)$\newline
if $x$ and $y$ lie in the same connected component
of $T\setminus \IR$, and\newline\hspace*{3em}
$\nu(x,y) := \delta_T(x,r(x)) + |r(x)-r(y)| + \delta_T(r(y),y)$
\newline
otherwise.

Note that $\nu$ coincides on $\IR$ with the natural distance. Observe
also that when $T$ is uniquely arcwise connected (in particular, when
$T$ is a lifted tree) then the distance $\nu$ gives the length of the
shortest path (in $T$) joining $x$ and $y$ and, thus, it is indeed the
taxicab metric.
\end{defi}

Now we endow the space $\Li$ with the $\sup$ distance
with respect to the distance $\nu$. Given two maps $F,G
\in \Li$, we set
\[
d(F,G) := \sup_{x \in T} \nu(F(x),G(x))
        = \sup_{x \in r^{-1}([0,1])} \nu(F(x),G(x)).
\]

Observe that the space of (not necessarily continuous) maps from $\IR$
to itself of degree one is also endowed with the sup distance:
\[
d(F,G) := \sup_{x \in \IR} |F(x) - G(x)|
        = \sup_{x \in [0,1]} |F(x) - G(x)|.
\]

\begin{lemm}\label{lem:FuFrLips}
The maps $r$, $F \mapsto r \circ F$, $F \mapsto F_l$ and $F \mapsto
F_u$ are Lipschitz continuous with constant 1.
\end{lemm}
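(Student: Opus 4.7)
The plan is to reduce everything to showing that the retraction $r$ itself is $1$-Lipschitz with respect to $\nu$, after which the other three statements follow by standard manipulations.

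First I would verify that $\map{r}{T}[\IR]$ is $1$-Lipschitz. Given $x,y\in T$, if $x$ and $y$ lie in the same connected component $C$ of $T\setminus\IR$ then $r$ is constant on $\Clos{C}$, so $|r(x)-r(y)|=0 \le \nu(x,y)$. Otherwise, the definition of $\nu$ gives
\[
\nu(x,y)=\delta_T(x,r(x))+|r(x)-r(y)|+\delta_T(r(y),y)\ge |r(x)-r(y)|,
\]
(covering also the mixed case $x\in\IR$, $y\notin\IR$, since then $\delta_T(x,r(x))=0$). This shows $|r(x)-r(y)|\le\nu(x,y)$ in all cases.

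Next, the Lipschitz property of $F\mapsto r\circ F$ is immediate: for any $F,G\in\Li$ and any $x\in T$,
\[
|r(F(x))-r(G(x))|\le \nu(F(x),G(x)) \le d(F,G),
\]
so $d(r\circ F, r\circ G)\le d(F,G)$.

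For $F\mapsto F_u$ and $F\mapsto F_l$, I would invoke the elementary fact that for bounded real-valued functions $\varphi,\psi$ on the same set $A$,
\[
\bigl|\sup_{y\in A}\varphi(y)-\sup_{y\in A}\psi(y)\bigr|\le \sup_{y\in A}|\varphi(y)-\psi(y)|,
\]
and similarly for inf. Applying this to $\varphi(y)=r(F(y))$ and $\psi(y)=r(G(y))$ over $\set{y\in T}{y\preccurlyeq x}$ (which by Remark~\ref{rem:up-low-finint} can be taken to be the compact set $\set{y}{x-1\preccurlyeq y\preccurlyeq x}$, ensuring finiteness) yields
\[
|F_u(x)-G_u(x)|\le \sup_{y\preccurlyeq x}|r(F(y))-r(G(y))|\le d(F,G),
\]
and the argument for $F_l$ is symmetric using inf instead of sup. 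Taking the supremum over $x\in\IR$ gives $d(F_u,G_u)\le d(F,G)$ and $d(F_l,G_l)\le d(F,G)$.

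There is no real obstacle here; the only point that requires a moment of care is the case analysis in Step 1 verifying that $\nu$ dominates $|r(\cdot)-r(\cdot)|$ in every configuration of $x$ and $y$ relative to $\IR$. Once $r$ is known to be $1$-Lipschitz, the remaining three statements are formal consequences of composition and of sup/inf being non-expansive operations.
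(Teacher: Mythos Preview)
Your argument is correct and matches the paper's approach: the paper also first observes that $r$ is $1$-Lipschitz from the definition of $\nu$, deduces the statement for $F\mapsto r\circ F$ trivially, and then handles $F_u$ and $F_l$ by the same sup/inf non-expansiveness argument (referring to \cite[Proposition~3.7.7(e)]{ALM} rather than writing it out). Your version simply supplies the details that the paper leaves implicit or delegates to the reference.
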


\begin{proof}
The fact that $r$ is Lipschitz continuous with constant 1 follows
easily from the above definitions. Then, this trivially implies that
$F \mapsto r \circ F$ is Lipschitz continuous with constant 1. The
other two statements follow in a similar way to
\cite[Proposition~3.7.7(e)]{ALM}.
\end{proof}

\medskip
Now we are ready to extend to this setting the so called ``water
functions'', that play a key role in the study of the rotation
intervals of circle maps (see \cite{ALM}). Before defining these maps
we notice that, if $F\in\Li[\IR]$, then the definition of $F_u$ is
simply given by $F_u(x)= \sup \set{F(y)}{y\leq x}$. We recall that
$\Fcr$ denotes the map $\map{r \circ F\evalat{\IR}}{\IR}$. Given a map
$F \in \Li$ we define the family $\map{F_{\mu}}{\IR}$ by
\begin{equation}\label{Fmufam}
 F_{\mu} = \left( \min\{\Fcr, F_l + \mu\}\right)_u
  \quad \text{for}\quad
  0 \le \mu \le
   \mu_1 = \sup_{x \in \IR} \bigl\{\Fcr(x) - F_l(x)\bigr\}.
\end{equation}

The next lemma studies the basic properties of the family $F_\mu$. Its
proof basically follows that of \cite[Proposition~3.7.17]{ALM} by
using Lemma~\ref{lem:FlFu} in addition to
\cite[Proposition~3.7.7]{ALM}. However, in sake of completeness and
clarity, we will outline the proof.

\begin{prop}\label{prop:Fmu}
Let $F \in \Li$ be combed. Then, the maps $F_\mu$ are non-decreas\-ing
continuous liftings of degree one circle maps that satisfy:
\begin{enumerate}
\item $F_0 = F_l$ and $F_{\mu_1} = F_u$.

\item If $0 \le \lambda \le \mu \le \mu_1$, then $F_\lambda \le
F_\mu$.

\item $\Const(\Fcr) \subset \Const(F_\mu)$ for each $\mu$.

\item Each $F_\mu$ coincides with $\Fcr$ outside
$\Const(F_\mu)$.

\item The function $\mu \mapsto F_\mu$ is Lipschitz continuous with
constant 1.
\end{enumerate}
\end{prop}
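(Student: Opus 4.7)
The plan is to follow the circle-map template of \cite[Proposition~3.7.17]{ALM}, exploiting the fact that combedness collapses the key estimates back to the real line. Since $F$ is combed, Lemma~\ref{lem:FlFu}(e) gives that $F_l$ and $F_u$ are continuous, and both (like $\Fcr$) are liftings of degree one satisfying $g(x+1)=g(x)+1$. Hence $h_\mu:=\min\{\Fcr,F_l+\mu\}$ is continuous with $h_\mu(x+1)=h_\mu(x)+1$, and, rewriting $(h_\mu)_u$ via Remark~\ref{rem:up-low-finint} as a maximum over the compact sliding window $[x-1,x]$, the function $F_\mu=(h_\mu)_u$ is automatically continuous, non-decreasing, and again a degree one lifting.

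Parts (a), (b) and (e) are then essentially formal. For (a): at $\mu=0$, Lemma~\ref{lem:FlFu}(a) (applied with $y=x\in\IR$) gives $F_l\le\Fcr$, so $h_0=F_l$, which is already non-decreasing, whence $F_0=(F_l)_u=F_l$; at $\mu=\mu_1$ the definition of $\mu_1$ yields $\Fcr\le F_l+\mu_1$, so $h_{\mu_1}=\Fcr$, and Lemma~\ref{lem:combed}(a) identifies $(\Fcr)_u$ with $F_u$. Statement (b) is immediate from the monotonicity of $\min$ and of $(\cdot)_u$ in their arguments. For (e) one observes that $|h_\mu(x)-h_\lambda(x)|\le|\mu-\lambda|$ because $\min$ is $1$-Lipschitz in each slot, and Lemma~\ref{lem:FuFrLips} then yields $d(F_\mu,F_\lambda)\le d(h_\mu,h_\lambda)\le|\mu-\lambda|$.

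For (c), the key is that when $F$ is combed, Lemma~\ref{lem:combed} gives the explicit formulas $F_l(x)=\inf\{\Fcr(y):y\ge x\}$ and $F_u(x)=\sup\{\Fcr(y):y\le x\}$. If $\Fcr$ is constant $\equiv c$ on an open interval $I\subset\IR$ then, since $\Fcr(x)\in\{\Fcr(y):y\ge x\}$ already attains this constant, enlarging or shrinking the set $\{y\ge x\}$ within $I$ changes neither the infimum nor the supremum. Hence $F_l$ and $F_u$ are also locally constant on $I$, so is $h_\mu$, and then so is $F_\mu=(h_\mu)_u$, proving $I\subset\Const(F_\mu)$.

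Statement (d) is the main obstacle. Fix $x\notin\Const(F_\mu)$. The first step is to show $F_\mu(x)=h_\mu(x)$: if $F_\mu(x)>h_\mu(x)$, continuity of $h_\mu$ gives $h_\mu<F_\mu(x)$ on some $(x-\delta,x+\delta)$, so the running-supremum definition of $(h_\mu)_u$ forces $F_\mu\equiv F_\mu(x)$ throughout that neighbourhood, contradicting $x\notin\Const(F_\mu)$. It then suffices to rule out $h_\mu(x)=F_l(x)+\mu<\Fcr(x)$. By continuity the strict inequality persists on a neighbourhood $U$ of $x$, where $h_\mu=F_l+\mu$; writing $C=F_\mu(\inf U)$, on $U$ one has $F_\mu=\max\{C,F_l+\mu\}$, and unless $F_\mu$ is constant in a neighbourhood of $x$ (contradiction), $F_\mu$ must coincide with $F_l+\mu$ in a one-sided neighbourhood of $x$, forcing $F_l$ to be non-constant there. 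A short estimate using $F_l(x)=\inf\{\Fcr(y):y\ge x\}$ and the continuity of $\Fcr$ shows that if $F_l$ is non-constant on any one-sided neighbourhood of $x$ then $F_l(x)=\Fcr(x)$, contradicting $F_l(x)+\mu<\Fcr(x)$. This chain of case analysis, together with the identification of $F_l$ and $F_u$ via Lemma~\ref{lem:combed}, is the technical heart of the proof.
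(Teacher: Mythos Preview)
Your proof is correct and follows essentially the same route as the paper's: both reduce to the circle-map argument of \cite[Proposition~3.7.17]{ALM}, using combedness via Lemma~\ref{lem:combed} to identify $F_l=(\Fcr)_l$ and $F_u=(\Fcr)_u$. The only difference is cosmetic: where the paper cites the auxiliary results \cite[Lemmas~3.7.8 and 3.7.9, Proposition~3.7.7]{ALM} as black boxes, you unpack them directly (your ``Step~1'' is exactly \cite[Lemma~3.7.8(a)]{ALM}, and your ``short estimate'' is \cite[Lemma~3.7.8(b)]{ALM}); the paper also organises (d) as a contrapositive while you argue directly, but the case split and the underlying mechanism are identical.
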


\begin{proof}
To simplify the notation we denote by $G_\mu$ the map
\[
\map{\min\{\Fcr, F_l + \mu\}}{\IR}.
\]
Then, $F_\mu = (G_\mu)_u$.

Since $F$ is combed, Lemma~\ref{lem:FlFu}(e) implies that $F_l$, and
hence $G_\mu$, are continuous liftings of degree one circle maps for
each $\mu$. Then, in view of \cite[Proposition~3.7.7(d)]{ALM}, the
maps $F_\mu$ are non-decreasing continuous liftings of degree one
circle maps.

Lemma~\ref{lem:FlFu}(a) and Remark~\ref{non-decreasing} tell us that
$F_l \le \Fcr$. So, $G_0 = F_l$ and, since $F_l$ is a self-map of
$\IR$, $F_0 = (F_l)_u = F_l$ by \cite[Lemma~3.7.7(c)]{ALM}. On the
other hand, $G_{\mu_1} = \Fcr$. Consequently, for every $x \in \IR$,
\[
F_{\mu_1} (x) = (\Fcr)_u (x)
 = \sup \set{r(F(y))}{y \in \IR \text{ and } y \le x}
 = F_u(x)
\]
by Lemma~\ref{lem:combed}. This ends the proof of (a). Statement (b)
follows from \cite[Proposition~3.7.7(b)]{ALM} and the simple
observation that $G_\lambda \le G_\mu$.

Again by Lemma~\ref{lem:combed} we see that
\[
F_l(x) = \inf \set{r(F(y))}{y \in \IR \text{ and } y \ge x} = (\Fcr)_l
(x).
\]
Thus, $\Const(F_l + \mu) = \Const(F_l) \supset \Const(\Fcr)$ by
\cite[Lemma~3.7.9(b)]{ALM} and, hence, $\Const(G_\mu) \supset
\Const(\Fcr)$. By \cite[Lemma~3.7.9(a)]{ALM} we see that
\begin{equation}
 \Const(F_\mu) \supset \Const(G_\mu) \supset
 \Const(\Fcr);\label{eq:const}
\end{equation}
and (c) holds.

To prove (d) suppose that $\Fcr(x) \ne F_\mu(x) = (G_\mu)_u(x)$. If
$F_l(x) + \mu \ge \Fcr(x)$ then
\[
 G_\mu(x) = \Fcr(x) \ne (G_\mu)_u(x).
\]
So, $x \in \Const((G_\mu)_u) = \Const(F_\mu)$ by
\cite[Lemma~3.7.8(a)]{ALM}. Now suppose that $F_l(x) + \mu < \Fcr(x)$.
This implies that $(\Fcr)_l(x) = F_l(x) < \Fcr(x)$. Then,
\cite[Lemma~3.7.8(b)]{ALM} implies that $x \in \Const(F_l) =
\Const(F_l + \mu).$ Hence, there exists a neighbourhood $U \subset
\Const(F_l + \mu)$ of $x$ in $\IR$ such that $ \Fcr(y) > F_l(y) + \mu
= G_\mu(y)$ for every $y \in U$. Thus, by \eqref{eq:const}, $x \in
\Const(G_\mu) \subset \Const(F_\mu)$.

Finally, one can show that $\mu \mapsto G_\mu$ is Lipschitz continuous
with constant 1. So, (e) follows from
\cite[Proposition~3.7.7(e)]{ALM}.
\end{proof}

\medskip
The next theorem is the main result of this section. It shows that for
maps which are combed, the rotation set has properties similar to the
ones displayed by the rotation interval of continuous degree one
circle maps.

\begin{theo}\label{theo:rotset}
For each map $F \in \Li$ which is combed the following statements hold
\begin{enumerate}
\item $\Rot(F) = \Rot(\Fcr) =\RotR(F) = \Rot^+(F) = \Rot^-(F)$.
Moreover, $\Rot(F) = [\rho(F_l), \rho(F_u)]$.

\item For every $\alpha \in \Rot(F)$, there exists a twist orbit
{\modi} of $F$ contained in $\IR$, disjoint from $\Const
(F\evalat{\IR})$ and having rotation number $\alpha$.

\item For every $\alpha \in \IQ \cap \Rot(F)$, the orbit {\modi} given
by (b) can be taken periodic {\modi}.

\item The endpoints of the rotation interval, $\rho(F_l)$ and
$\rho(F_u)$ depend continuously on $F$.
\end{enumerate}
\end{theo}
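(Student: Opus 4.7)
The plan is to exploit the water-function family $\{F_\mu\}_{0\le\mu\le\mu_1}$ from Proposition~\ref{prop:Fmu} together with the classical rotation theory for non-decreasing liftings of degree-one circle maps (in particular \cite[Theorem~3.7.20]{ALM}), and then pull the orbits produced for each $F_\mu$ back through $\Fcr$ to $F$. Combined with Lemma~\ref{lem:FuFrLips}, this same setup yields the continuity statement (d) as a soft corollary.

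First I would handle (a), (b), (c) simultaneously. Corollary~\ref{lem:RUpperBound} already provides the upper bound $\Rot^\pm(F)\supset\Rot(F)\subset[\rho(F_l),\rho(F_u)]$, so everything reduces to producing, for each $\alpha\in[\rho(F_l),\rho(F_u)]$, a witness in $\IR$ with the required twist and periodicity properties. By Proposition~\ref{prop:Fmu}, $\mu\mapsto F_\mu$ is a Lipschitz family of non-decreasing continuous liftings of degree-one circle maps with $F_0=F_l$ and $F_{\mu_1}=F_u$. Since the rotation number of such a lifting depends continuously on the lifting, $\mu\mapsto\rho(F_\mu)$ is continuous, and by the intermediate value theorem $\{\rho(F_\mu):0\le\mu\le\mu_1\}=[\rho(F_l),\rho(F_u)]$. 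Fix $\alpha$ in this interval, pick $\mu$ with $\rho(F_\mu)=\alpha$, and apply \cite[Theorem~3.7.20]{ALM} to $F_\mu$ to obtain $x\in\IR$ whose $F_\mu$-orbit {\modi} is twist, avoids $\Const(F_\mu)$, and has rotation number $\alpha$ (periodic {\modi} when $\alpha\in\IQ$). By Proposition~\ref{prop:Fmu}(d), $F_\mu=\Fcr$ on $\IR\setminus\Const(F_\mu)$, so this orbit is also an orbit of $\Fcr$ with the same properties; by Proposition~\ref{prop:Fmu}(c), $\Const(\Fcr)\subset\Const(F_\mu)$, so it is also disjoint from $\Const(F|_\IR)$. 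Lemma~\ref{lem:Const} then says $\Fcr=F$ along that orbit, so the orbit {\modi} of $x$ under $F$ lies in $\IR$, is twist, avoids $\Const(F|_\IR)$, and $\rhos{F}(x)=\alpha$. This proves (b) and (c) and, together with Corollary~\ref{lem:RUpperBound}, establishes $\Rot(F)=\RotR(F)=\Rot^+(F)=\Rot^-(F)=[\rho(F_l),\rho(F_u)]$. The remaining equality $\Rot(\Fcr)=\Rot(F)$ follows from Lemma~\ref{lem:combed}: since $F$ is combed, $(\Fcr)_u=F_u$ and $(\Fcr)_l=F_l$, so the rotation interval of the circle map $\Fcr$ is also $[\rho(F_l),\rho(F_u)]$.

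For (d), Lemma~\ref{lem:FuFrLips} tells us that $F\mapsto F_l$ and $F\mapsto F_u$ are $1$-Lipschitz from $(\Li,d)$ into the space of degree-one liftings of circle maps endowed with the sup distance. On the subspace of non-decreasing liftings, the assignment ``lifting $\mapsto$ rotation number'' is continuous (indeed $1$-Lipschitz). Composing gives the continuity of $F\mapsto\rho(F_l)$ and $F\mapsto\rho(F_u)$.

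The main obstacle is the careful descent in the middle of the argument: one must check that the orbit extracted from the auxiliary map $F_\mu$ genuinely lives in $\IR$ and is an orbit of $F$ itself, not merely of a non-decreasing proxy. This is a two-step pullback ($F_\mu\leadsto\Fcr$ via Proposition~\ref{prop:Fmu}(c),(d), then $\Fcr\leadsto F$ via Lemma~\ref{lem:Const}), and one has to verify at each step that the orbit stays disjoint from the relevant $\Const$ set; the combed hypothesis is exactly what makes this work, since it is what forces $(\Fcr)_l=F_l$, $(\Fcr)_u=F_u$, and the continuity of $F_\mu$ needed to invoke \cite[Theorem~3.7.20]{ALM}.
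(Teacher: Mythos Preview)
Your argument is correct and follows essentially the same route as the paper's own proof: build the water-function family $F_\mu$, use continuity of $\mu\mapsto\rho(F_\mu)$ to hit every $\alpha\in[\rho(F_l),\rho(F_u)]$, extract a twist orbit of $F_\mu$ disjoint from $\Const(F_\mu)$, and then pull it back to $F$ via Proposition~\ref{prop:Fmu}(c,d). The only cosmetic differences are that the paper cites \cite[Lemmas~3.7.15 and 3.7.16]{ALM} rather than \cite[Theorem~3.7.20]{ALM} for the orbit extraction, and it handles the equality $\Rot(\Fcr)=\Rot(F)$ implicitly through the constructed orbit (which is simultaneously an orbit of $\Fcr$) rather than by invoking Lemma~\ref{lem:combed} to identify $(\Fcr)_u=F_u$ and $(\Fcr)_l=F_l$; your explicit use of Lemma~\ref{lem:Const} for the step $\Fcr|_P=F|_P$ is in fact a welcome clarification of a point the paper glosses over.
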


\begin{proof}
It follows along the lines of the proof of \cite[Theorem~3.7.20]{ALM}
but using the previous results for combed maps and the family
$F_{\mu}$ with $0 \le \mu \le \mu_1$ defined by \eqref{Fmufam}. By
Proposition~\ref{prop:Fmu} every $F_\mu$ is a continuous
non-decreasing lifting of a degree one circle map. Hence,
\cite[Lemma~3.7.11]{ALM} implies that $\rho(F_\mu) = \rho(F_\mu(x))$
exists and is independent on $x$. Also, from
Proposition~\ref{prop:Fmu}(a,b) it follows easily that $\rho(F_l) \le
\rho(F_\mu) \le \rho(F_\lambda) \le \rho(F_u)$ whenever $0 \le \mu \le
\lambda \le \mu_1$. Notice also that the function $\mu \mapsto
\rho(F_\mu)$ is continuous and Statement~(d) holds by
Proposition~\ref{prop:Fmu}(e), Lemma~\ref{lem:FuFrLips} and
\cite[Lemma~3.7.12]{ALM}.

From Corollary~\ref{lem:RUpperBound} and Theorem~\ref{theo:Fn} we
obtain that the rotation sets $\Rot^+(F)$, $\Rot^-(F)$ and $\Rot(\Fcr)
\subset \RotR \subset \Rot(F)$ are contained in $[\rho(F_l),
\rho(F_u)]$.

From above we see that for all $\alpha \in [\rho(F_l), \rho(F_u)]$
there exists an $a \in [0, \mu_1]$ such that $\rho(F_a) = \alpha$.
Since $F_a$ is the lifting of a continuous degree one circle map, by
\cite[Lemmas~3.7.15 and 3.7.16]{ALM}, $F_a$ has an orbit {\modi} $P
\subset \IR,$ disjoint from $\Const(F_a)$ and whose $F_a$-rotation
number is $\alpha$. Moreover, if $\alpha \in \IQ$, then $P$ can be
taken periodic {\modi}. Since $F_a$ is non-decreasing, $P$ is twist.

Proposition~\ref{prop:Fmu}(c,d) tell us that $P$ is disjoint from
\[
\Const(F_a) \supset \Const(\Fcr) \supset \Const(F\evalat{\IR})
\]
and $F_a\evalat{P} = r \circ F\evalat{P}$. Then, since $P \subset
\IR$, $F_a\evalat{P} = F\evalat{P}$. Consequently, $P$ is a twist
{\modi} orbit of $F$ with $F$-rotation number $\alpha$ and, if $\alpha
\in \IQ$, then $P$ is periodic {\modi}. This ends the proof of the
theorem.
\end{proof}

\subsection{The set of periods for combed maps}
This subsection is devoted to characterising the set of periods
{\modi} for combed maps. Its main result (Theorem~\ref{theo:setper})
is the analogue of \cite[Theorem~3.9.6]{ALM} for circle maps. To state
it we need to introduce some notation.

Given two real numbers $a \le b$ we denote by $M(a,b)$ the set 
\[\set{n\in \IN}{a < k/n < b \text{ for some integer $k$}}.
\]
Clearly $M(a,b) =
\emptyset$ whenever $a = b$ and, if $a \ne b$, $M(a,b) \supset
\set{n\in\IN}{n > \tfrac{1}{b-a}}$.

\begin{theo}\label{theo:setper}
If $F \in \Li$ is combed and $\Rot(F) = [a,b]$, then the following
statements hold:
\begin{enumerate}
\item If $p,q$ are coprime and $p/q\in (a,b)$, then
$\Per(p/q,F)=q\IN$.

\item $\Per(F) = \Per(a,F) \cup M(a,b) \cup \Per(b,F)$.
\end{enumerate}
\end{theo}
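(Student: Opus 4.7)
\emph{Plan.} The plan is to reduce both statements to the classical result for continuous degree-one liftings of circle maps, \cite[Theorem~3.9.6]{ALM}, applied to the map $\Fcr = r\circ F\evalat{\IR}$. By Lemma~\ref{lem:degree1-Fn}(d), $\Fcr$ is a continuous degree-one lifting of a circle map, and by Theorem~\ref{theo:rotset}(a), $\Rot(\Fcr) = \Rot(F) = [a,b]$, so the classical theory applies to $\Fcr$ with exactly the same rotation interval as $F$. The combed hypothesis is only used through this equality of rotation sets.

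For (a), the inclusion $\Per(p/q,F)\subset q\IN$ is immediate from Proposition~\ref{prop:Per-subset-qN}. For the reverse direction, since $p/q\in(a,b)=\Int(\Rot(\Fcr))$ is coprime, \cite[Theorem~3.9.6]{ALM} gives $\Per(p/q,\Fcr)=q\IN$; moreover, a close reading of its proof (which constructs the period-$mq$ orbits via a Markov-type covering structure inside subintervals of $\IR$ that can be taken inside the non-constant part of $\Fcr$) yields, for every $m\in\IN$, a periodic {\modi} point $x\in\IR$ of $\Fcr$ of period $mq$ and rotation $p/q$ such that $\Orb_1(x,\Fcr)\cap\Const(\Fcr)=\emptyset$. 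An inductive application of Lemma~\ref{lem:Const} along this orbit then gives $F^i(x)=\Fcr^i(x)$ for all $i\geq 0$, so $x$ is also periodic {\modi} of $F$ with period $mq$ and $F$-rotation $p/q$; hence $mq\in\Per(p/q,F)$.

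Statement (b) is a purely combinatorial consequence of (a). The inclusion $\Per(a,F)\cup\Per(b,F)\subset\Per(F)$ is trivial. If $n\in M(a,b)$, then $a<k/n<b$ for some $k\in\IZ$; writing $k/n=p/q$ in lowest terms gives $q\mid n$ and $p/q\in(a,b)$, and (a) yields $n\in q\IN=\Per(p/q,F)\subset\Per(F)$. Conversely, if $n\in\Per(F)$, Proposition~\ref{prop:Per-subset-qN} supplies an irreducible $p/q\in\Rot(F)\cap\IQ\subset[a,b]$ with $q\mid n$; the three cases $p/q=a$, $p/q=b$ and $p/q\in(a,b)$ place $n$ respectively in $\Per(a,F)$, $\Per(b,F)$, or $M(a,b)$ (in the last case with $k=np/q\in\IZ$, so that $k/n=p/q\in(a,b)$).

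The main obstacle I foresee is extracting from the proof of \cite[Theorem~3.9.6]{ALM} the disjointness of the period-$mq$ orbits of $\Fcr$ from $\Const(\Fcr)$; this is precisely the hypothesis needed to transfer periodicity from $\Fcr$ to $F$ via Lemma~\ref{lem:Const}. This property is implicit in the construction (the water functions of $\Fcr$ play the same role as the family $F_\mu$ of Proposition~\ref{prop:Fmu}, and the resulting periodic orbits lie outside their respective constant sets), but must be made explicit. Should this not be readily available, the Markov-covering argument behind \cite[Theorem~3.9.6]{ALM} can be redone directly in the present setting by combining Proposition~\ref{prop:+cover-periodic} with the family $F_\mu$, producing the required orbits inside $\IR\setminus\Const(\Fcr)$ by construction.
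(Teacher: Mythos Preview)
Your argument for (b) is correct and matches the paper's exactly. For (a), your overall strategy---reduce to $\Fcr$, then pull periodic orbits back to $F$---is the right one, and your identification of the obstacle is accurate. However, your primary route does not quite close: the proof of \cite[Theorem~3.9.6]{ALM} (more precisely \cite[Lemma~3.9.1]{ALM}, which is the relevant statement) produces the period-$mq$ orbits of $\Fcr$ by the standard horseshoe/covering argument, and that argument does \emph{not} give you $\Orb_1(x,\Fcr)\cap\Const(\Fcr)=\emptyset$ for free. The twist-orbit machinery (water functions, \cite[Theorem~3.7.20]{ALM}, or Proposition~\ref{prop:Fmu} here) only supplies one period-$q$ orbit per rotation number outside $\Const$; it says nothing about the higher multiples $mq$. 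So your ``close reading'' step is wishful, and the transfer via Lemma~\ref{lem:Const} cannot be completed along that route.

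The paper does exactly what you propose as your fallback, and does it from the start. It lifts from the \emph{proof} of \cite[Lemma~3.9.1]{ALM} only the four points $t'<z<t<z'$ that give the two disjoint intervals $I=[t',z]$, $J=[t,z']$ with mutual positive $(\Fcr)^q{-}p$--coverings, and then feeds the loop $I\pluscover{}J\pluscover{}\cdots\pluscover{}J\pluscover{}I$ into Proposition~\ref{prop:+cover-periodic}. The point of Proposition~\ref{prop:+cover-periodic} is precisely that it already returns a genuine $F$-periodic point (via Lemma~\ref{lem:fixed-point-non-constant} and Lemma~\ref{lem:composition-Const}), so no separate ``transfer from $\Fcr$ to $F$'' step is needed. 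In short: your plan is sound, the obstacle you flag is real, and your fallback \emph{is} the paper's proof.
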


\begin{proof}
If $a=b$ there is nothing to prove. So, in the rest of the proof we
assume that $a \ne b$.

Assume that $p,q$ are coprime and $a < p/q < b$, and let $n \in \IN$.
We have to show that $qn \in \Per(p/q,F)$. By
Theorem~\ref{theo:rotset}(a) we see that $p/q \in \Rot(\Fcr)$ and
observe that $\Fcr$ is a degree one circle map. To simplify the
notation, let us denote by $G$ the map $(\Fcr)^q - p$. By
\cite[Lemma~3.7.1]{ALM}, $\Rot(G) = [a-p/q, b- p/q]$ which contains
$0$ in its interior. Then, from the proof of \cite[Lemma~3.9.1]{ALM},
there exist points $t', z, t, z' \in \IR$ such that $t' < z < t < z'$,
$G(t') < t'$, $G(z) \ge (z')$, $G(t) \le t'$ and $G(z') > z'$.

Let us denote the interval $[t',z]$ by $I$ and the interval $[t,z']$
by $J$. Then
\[
 I \pluscover{G} I,J \qquad \text{and} \qquad J \pluscover{G} I,J .
\]

For $n = 1$ take the loop $I \pluscover{G} I$ of length 1 and for $n
\ge 2$ let us consider the following loop of length
$n$:
\[
 \left(I \pluscover{G} J\right) \left(J \pluscover{G} J\right)^{n-2}
 \left(J \pluscover{G} I\right) .
\]
Then, in view of Proposition~\ref{prop:+cover-periodic}, for each $n
\in \IN$, there exists $x \in I$ such that $F^{nq}(x) = x + np$ and
$F^{qi}(x) \in J + ip$ for all $i = 1,2, \dots, n-1$. By setting
$\widetilde{G} := F^q -p$ this can be rewritten as $\widetilde{G}^q(x)
= x$ and $\widetilde{G}^i(x) \in J$ for all $i = 1,2, \dots, n-1$.
Consequently, $x$ is a periodic point of $\widetilde{G}$ of period $n$
because $I \cap J \ne \emptyset$ or, in other words, $x$ is a periodic
{\modi} point of $F^q$ of period $n$ such that $\rhos{F^q}(x) = p$.
Then, from the proof of \cite[Lemma~3.9.3]{ALM} it follows that $x$ is
a periodic {\modi} point of $F$ of period $qn$ such that $\rhos{F}(x)
= p/q$. Since $\Per(p/q,F)\subset q\IN$ by
Proposition~\ref{prop:Per-subset-qN}, this ends the proof of (a).

According to Proposition~\ref{prop:Per-subset-qN},
\[
\Per(F)=\Per(a,F) \cup\Per(b,F)\cup\bigcup_{\alpha\in(a,b)\cap\IQ}
\Per(\alpha,F).
\]
On the other hand, $M(a,b)$ can be written as the union of $q\IN$ for
all pairs $p,q$ such that $a < p/q < b$ and $(p,q) = 1$. Consequently,
$M(a,b)=\bigcup_{\alpha\in(a,b)\cap\IQ} \Per(\alpha,F)$ by (a), which
proves (b).
\end{proof}

\begin{rema}
In this situation, contrary to the case of circle maps, the
characterisation of the sets $\Per(a,F)$ and $\Per(b,F)$ (where $a$
and $b$ are the endpoints of $\Rot(F)$) is not possible without
completely knowing the lifted space $T$.
\end{rema}

\section{Additional results for infinite graphs}\label{sec:graph}

This section is devoted to improving the study of the rotation set and
the set of periods {\modi} for the subclass of $\Li$ consisting of
continuous maps on infinite graph maps defined as follows.

We recall that a \emph{(topological) finite graph} is a compact
connected set $G$ containing a finite subset $V$ such that each
connected component of $G\setminus V$ is homeomorphic to an open
interval. A finite tree is a finite graph with no loops, i.e. with no
subset homeomorphic to a circle.

When we unwind a finite graph $G$ with respect to a loop, we obtain an
infinite graph $T$ that may or may not be in $\InfX$ (see
Figure~\ref{fig:hatG} for an infinite graph not in $\InfX$ and
Figure~\ref{fig:inftrees} for an infinite tree that belongs
to $\InfX$). Notice that if $G$ has exactly one loop, then $T$ is an
infinite tree and $T\in\InfX$.

\begin{defi}
Let $\InfG$ denote the subfamily of spaces $T\in\InfX$ such that
\[
 r^{-1}([0,1]) = \set{x \in T}{0 \leq r(x) \leq 1}
\]
is a finite graph. The elements of $\InfG$ will be informally called
\emph{infinite graphs}.

A point $x\in T$ is called a \emph{vertex} if there exists a
neighbourhood $U$ of $x$ such that $U\setminus\{x\}$ has at least 3
connected components. Note that all branching points of $T$ are
vertices. Also, a point $x\in T$ is called an \emph{endpoint} if
$T\setminus\{x\}$ has a unique connected component.
\end{defi}

\subsection{$\RotR(F)=\Rot(F)$ for transitive {\modi} infinite graph
maps}

A map $F\in\Li$ is said \emph{transitive {\modi}} if it is the lifting
of a transitive map, that is, for every non empty open sets $U,V$ in
$T$, there exists $n\geq 0$ such that $(F^n(U) + \IZ) \cap V \neq
\emptyset$. In other words, for every non empty open set $U \subset
T$, $\bigl(\bigcup_{n\geq 0} F^n(U)\bigr) + \IZ$ is dense in $T$. In
particular, $\bigcup_{n\geq 0} F^n(\IR)$ is dense in $T$ if $F$ is
transitive {\modi}.

Theorem~\ref{theo:RR(F)=R(F)} gives a sufficient condition, which
includes the case when $F$ is transitive {\modi}, to have
$\RotR(F)=\Rot(F)$ when $T\in\InfG$. In this situation, the study of
$\RotR(F)$ done in the rest of the paper gives indeed information on
the whole rotation set. We start with some preliminary results.

\newcommand{\TR}{\ensuremath{T\subsubR}}
In what follows we will set
\[
 \TR := \bigcup_{n\geq 0} F^n(\IR).
\]

\begin{lemm}\label{lem:T'}
Let $T\in\InfG$  and $F\in\Li$. Then
\begin{enumerate}
\item For all $n\geq 0$, $F^n(\IR)$ is a closed set.

\item For all $n\geq 0$, $F^{n+1}(\IR)\supset F^n(\IR)$ and
$\TR$ is connected. Consequently, $\Clos{\TR} \in \InfG$.

\item We have $F(\TR) = \TR$ and consequently, $F(\Clos{\TR}) =
\Clos{\TR}$.
\end{enumerate}
\end{lemm}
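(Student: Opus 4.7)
The plan is to prove the three claims in order, using as the main structural fact that removing any point of $\IR$ disconnects $T$ into its ``left'' and ``right'' halves. For (a), I will invoke Lemma~\ref{lem:degree1-Fn}(a) to write $F^n(\IR) = F^n([0,1]) + \IZ$; the set $K := F^n([0,1])$ is compact, hence $r(K)$ is bounded. If $y_i \to y$ with $y_i = z_i + k_i$, $z_i \in K$, $k_i \in \IZ$, then $r(y_i) = r(z_i) + k_i$ is bounded, so only finitely many $k_i$ occur; passing to a subsequence on which $k_i$ is constant, say equal to $k$, compactness of $K$ gives $y - k = \lim z_i \in K$, whence $y \in K + k \subset F^n(\IR)$.

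The core of (b) will be the implication $F(\IR) \supset \IR$, from which $F^{n+1}(\IR) = F^n(F(\IR)) \supset F^n(\IR)$ follows immediately by applying $F^n$. To establish it, I will use that for each $c \in \IR \setminus \B(T)$ the disjoint sets
\[
L_c := \set{x \in T}{r(x) < c}
\quad\text{and}\quad
R_c := \set{x \in T}{r(x) > c}
\]
are open, connected, and cover $T \setminus \{c\}$. The key point, where the defining property of lifted spaces enters, is that each connected component of $T \setminus \IR$ has closure meeting $\IR$ at a single point, so it lies entirely in $L_c$ or $R_c$ according to whether its attachment point is $< c$ or $> c$. Now $F([0,1])$ is a continuum with $F(0) \in L_c$ and $F(1) = F(0)+1 \in R_c$ for every $c \in (r(F(0)), r(F(0))+1) \setminus \B(T)$, so connectedness forces $c \in F([0,1])$. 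By density of $\IR \setminus \B(T)$ in $\IR$ and closedness of the continuum, $[r(F(0)),\, r(F(0))+1] \subset F([0,1])$, and translating by $\IZ$ gives $F(\IR) \supset \IR$.

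For the rest of (b), each $F^n(\IR)$ is connected as a continuous image of $\IR$, and all contain $\IR$ by iterating what was just shown, so $\TR$ is connected and so is $\Clos{\TR}$. Moreover $\Clos{\TR}$ contains $\IR$ and is $\tau$-invariant since $F^n(\IR) + 1 = F^n(\IR+1) = F^n(\IR)$; each connected component of $\Clos{\TR} \setminus \IR$ lies in one of $T \setminus \IR$ and inherits the single-attachment and compact-closure properties, while the finite-graph condition on $r^{-1}([0,1])$ descends to the closed subset $\Clos{\TR} \cap r^{-1}([0,1])$, so $\Clos{\TR} \in \InfG$. For (c), $F(\TR) = \bigcup_{n\geq 1} F^n(\IR) = \TR$, since the missing $n=0$ term $\IR$ is absorbed by $F(\IR) \supset \IR$. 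Continuity of $F$ then gives $F(\Clos{\TR}) \subset \Clos{F(\TR)} = \Clos{\TR}$, and the reverse inclusion uses that $F(\Clos{\TR})$ is closed by the argument of (a) applied to the $\tau$-invariant closed set $\Clos{\TR}$ (whose slice in $r^{-1}([0,1])$ is compact), together with $F(\Clos{\TR}) \supset F(\TR) = \TR$.

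The main obstacle will be showing $F(\IR) \supset \IR$; everything else reduces to routine set-theoretic manipulations once this is in hand. This step depends crucially on condition~(ii) of Definition~\ref{def:liftedspace}: in a space where a loop of $T$ bypasses part of $\IR$, a continuous path from $F(0)$ to $F(0)+1$ could avoid an entire arc of $\IR$, and the conclusion would fail (as illustrated by $\widehat{G}$ in Figure~\ref{fig:hatG}, which is precisely why such spaces are excluded from $\InfX$).
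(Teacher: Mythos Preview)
Your proof is correct and follows the paper's overall strategy: both reduce (a) to writing $F^n(\IR)$ as a compact set translated by $\IZ$, both hinge (b) on the inclusion $F(\IR)\supset\IR$ obtained from the single-attachment condition~(ii) of Definition~\ref{def:liftedspace}, and both derive (c) from this inclusion. The paper's version of the key step asserts that ``for a map $G\in\Li$, $G(\IZ)\subset\IR$'' and then writes $R_k\cap\IR\supset[G(k-1),G(k)]$; but that assertion is not justified anywhere and is in fact false in general (nothing prevents $F(0)$ from landing on a branch). Your separation argument via $L_c=\set{x}{r(x)<c}$ and $R_c=\set{x}{r(x)>c}$ for $c\in\IR\setminus\B(T)$ is the clean way to make the intended use of condition~(ii) precise, and is a genuine improvement over the paper's phrasing. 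Your one-line computation $F(\TR)=\bigcup_{n\ge 1}F^n(\IR)=\TR$ for (c) is likewise more economical than the paper's explicit search for preimages.
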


\begin{proof}
For $k \in \IZ$ and $G \in \Li$ set $R_k = G([k-1,k])$.
Recall that for a map $G \in \Li$, $G(\IZ) \subset \IR$. Consequently,
by the continuity of $G$ and Definition~\ref{def:liftedspace}(ii),
$R_k \cap \IR \supset [G(k-1), G(k)]$. Moreover, $R_k$ is compact and
\[
 G(\IR) = \bigcup_{k\in\IZ} R_k \supset
          \bigcup_{k\in\IZ} [G(k-1),G(k)].
\]
Since $R_{k+1} \cap R_k \supset \{G(k)\}$, the set $G(\IR)$ contains
$\IR$.

To prove that $G(\IR)$ is closed, we proceed as follows. Let
$\{x_n\}_{n\in\IN}\subset G(\IR)$ be a sequence converging to a
point $x\in T$. We will prove that $x\in G(\IR)$. The fact that it is
convergent implies that it is bounded. The sets $R_k$ are also bounded
and
$R_{k+1} = R_k+ 1$ because $G$ has degree one. This implies that
$\{x_n\}_{n\in\IN}\subset \bigcup_{k\in E} R_k$ where $E\subset \IZ$
is a finite set. Since $\bigcup_{k\in E} R_k$ is compact, we see that
$x\in \bigcup_{k\in E} R_k\subset G(\IR)$.

Now, Statement~(a) follows from above
by taking $G = F^n$. Also, by taking $G = F$ above we obtain $F(\IR)
\supset \IR$. Therefore, $F^{n+1}(\IR)\supset F^n(\IR)$ for all
$n\geq 0$. Since, $F^n(\IR)$ is connected by continuity this implies
that $\TR$ is connected. Hence $\Clos{\TR}\in\InfG$. This proves (b).

To end the proof of the lemma we only have to show that $F(\TR) =
\TR$. The inclusion $F(\TR) \subset \TR$ is obvious. Now we prove the
other inclusion. That is, for each $x \in \TR$ there exists $y \in
\TR$ such that $F(y) = x$. Since $x \in \TR$ there exists $l\ge 0$
such that $x \in F^l(\IR)$ but $x\notin F^j(\IR)$ for $j = 0,1,\dots,
l-1$. If $l > 0$ then, clearly, we can take $y \in F^{l-1}(\IR)$ and
we are done. Otherwise, $x \in \IR = \bigcup_{m\in\IZ} [F(-m),F(m)]$.
Hence, there exists $m \in \IZ$ such that $x \in [F(-m),F(m)]$. So,
$F(y) = x$ for some $y \in [-m,m]$.
\end{proof}

\begin{lemm}\label{lem:periodic-endpoints}
Let $T\in\InfG$ and $F\in\Li$ and assume that $\Clos{\TR}=T$. Then
there exists a finite set $A$ such that $T\setminus\TR = A + \IZ$,
the sets $\{A+n\}_{n\in\IZ}$ are pairwise disjoint and
every point of $T\setminus\TR$ is periodic {\modi}.
\end{lemm}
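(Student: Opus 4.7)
The plan is to establish the three conjuncts of the lemma in turn. First I would verify $\IZ$-invariance of $\TR$: because $F$ has degree one, $F(x+k)=F(x)+k$ for every $k\in\IZ$, and the subset $\IR\subset T$ satisfies $\IR+k=\IR$, so $F(\IR)+k=F(\IR+k)=F(\IR)$; inductively every $F^n(\IR)$ is $\IZ$-invariant and hence so are $\TR$ and $T\setminus\TR$. Invoking Remark~\ref{rem:tauplusone} to assume $h(\IZ)$ avoids the closure of every connected component of $T\setminus\IR$, the set $r^{-1}([0,1))$ becomes a genuine fundamental domain for the $\IZ$-action on $T$, and setting $A:=(T\setminus\TR)\cap r^{-1}([0,1))$ yields immediately both $T\setminus\TR=A+\IZ$ and pairwise disjointness of the translates $\{A+n\}_{n\in\IZ}$.

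The bulk of the work is showing that $A$ is finite. Since $T\in\InfG$, $T_0=r^{-1}([0,1])$ is a finite graph and the connected components of $T_0\setminus\IR$ are finitely many finite subtrees $B_1,\dots,B_\ell$, each attached to $\IR$ at a single point $z_i\in(0,1)$. Fix one such branch $B$ with attachment $z$. I would first argue that $F^n(\IR)\cap B$ is closed, connected and contains $z$: each connected component $P$ of the closed set $(F^n|_\IR)^{-1}(B)\subset\IR$ is a closed interval whose finite boundary points must, by continuity and the fact that $\partial_T B=\{z\}$, map to $z$; hence $F^n(P)$ is a connected subset of $B$ attached to $z$, and since every such piece passes through $z$ their union $F^n(\IR)\cap B$ is connected. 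Combining $\Clos{\TR}=T$ with the openness of $B\setminus\{z\}$ in $T$ shows that $\bigcup_n(F^n(\IR)\cap B)$ is dense in $B$. The finiteness of $A\cap B$ then reduces to the following sub-lemma: if $(C_n)$ is an increasing sequence of closed connected subsets of a finite tree $B$ all containing a fixed base point, with dense union in $B$, then $B\setminus\bigcup_n C_n$ is finite. I would prove this edge by edge: in a tree, an edge separates the graph, so any closed connected subset intersects a given edge $e$ in at most two sub-intervals anchored at its two endpoints; as $C_n$ grows, these sub-intervals grow monotonically and, by density, must meet in the limit, leaving at most two missing ``boundary points'' in $\bar e$. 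Summing over the finitely many edges and vertices of $B$, and then over the finitely many branches $B_i$, gives the finiteness of $A$.

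For the periodicity assertion, the new ingredient is surjectivity of $F$ on $T$. The degree-one relation gives $F(T)=\bigcup_{n\in\IZ}(F(T_0)+n)$, and such a union of $\IZ$-translates of the compact set $F(T_0)$ is closed, since any convergent sequence in it is bounded and so meets only finitely many of the translates. By Lemma~\ref{lem:T'}(c), $F(T)\supset F(\TR)=\TR$, which is dense, so $F(T)\supset\Clos{\TR}=T$, hence $F(T)=T$. Then
\[
 T = F(T) = F(\TR)\cup F(T\setminus\TR) = \TR\cup F(T\setminus\TR),
\]
and because $\TR$ and $T\setminus\TR$ are disjoint I deduce $F(T\setminus\TR)\supset T\setminus\TR$. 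Passing to the quotient $X:=T/\IZ$ with induced self-map $f$, and writing $\bar A$ for the image of $T\setminus\TR$ in $X$, I obtain $f(\bar A)\supset\bar A$; but $\bar A$ is a finite set (in bijection with $A$) and $|f(\bar A)|\le|\bar A|$ automatically, so $f(\bar A)=\bar A$ and $f|_{\bar A}$ is a bijection of a finite set. Every orbit of a bijection on a finite set is periodic, which lifted back to $T$ says precisely that for every $x\in T\setminus\TR$ there exist $q\ge1$ and $p\in\IZ$ with $F^q(x)=x+p$.

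The principal obstacle will be the tree sub-lemma in the second paragraph: it is the tree structure provided by $T\in\InfG$ that prevents a closed connected subset of $B$ from crossing a given edge more than twice, and so forces the limit complement on that edge to consist of only finitely many points; without that hypothesis the complement could be an uncountable Cantor-like set inside a single edge and the conclusion would fail.
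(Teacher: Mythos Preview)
Your finiteness argument has a genuine gap: the assertion that the branches $B_i$ are finite \emph{trees} is not guaranteed by $T\in\InfG$. The definition of a lifted space only requires each connected component of $T\setminus\IR$ to be compact and attached at a single point, and $\InfG$ merely adds that $r^{-1}([0,1])$ is a finite graph. A branch may contain loops --- think of a circle attached to $\IR$ at one point --- and indeed the remark following this lemma in the paper explicitly distinguishes the tree case from the general infinite-graph case. Your edge-by-edge argument for the sub-lemma hinges on ``in a tree, an edge separates the graph,'' which fails once a branch has a cycle.

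The repair is easy and in fact recovers the paper's (much shorter) proof. You do not need the branch-by-branch analysis of $F^n(\IR)\cap B$ at all: Lemma~\ref{lem:T'}(b) already gives that $\TR$ is connected, and by hypothesis it is dense in $T$. Now use the following general fact, valid for any finite graph $G$ (tree or not): a dense connected subset $C\subset G$ has finite complement. Indeed, if two distinct points $p,q\in G\setminus C$ lay in the interior of the same edge $e$, the open segment of $e$ between them would be an entire connected component of $G\setminus\{p,q\}$; the connected set $C\subset G\setminus\{p,q\}$ would then sit in a single component while density forces it to meet them all. Thus each edge of $X=r^{-1}([0,1])$ contributes at most one interior point to $A$, and $A$ is finite. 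This replaces your whole second paragraph and makes no use of tree structure.

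Your treatment of the periodicity part is correct and in fact more explicit than the paper's. The paper simply writes ``$F(\TR)=\TR$ implies $F(T\setminus\TR)=T\setminus\TR$'' without comment; your detour through surjectivity of $F$ (closed image containing a dense set) and the finite-bijection argument on the quotient is exactly what is needed to justify that step.
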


\begin{proof}
By Remark~\ref{rem:tauplusone} we may assume that $0$ is not a
branching point. Let $X =  r^{-1}([0,1]) = \set{x\in
T}{r(x)\in[0,1]}$. By definition, $X$ is a finite graph. Set $A :=
X\setminus\TR$. Since $0$ is not a branching point,
$X=r^{-1}((0,1))\cup\{0,1\}$, and thus the sets $\{A+n\}_{n\in\IZ}$
are pairwise disjoint. Clearly, $T\setminus\TR = A + \IZ$. By
Lemma~\ref{lem:T'}(b), the set $\TR \supset \IR$ is connected, and by
assumption it is dense in $T$. Thus $A$ is a finite subset of $X$.

By Lemma~\ref{lem:T'}(c), we have $F(\TR) = \TR$. This implies
$F(T\setminus\TR) = T\setminus\TR$ and, since $T\setminus\TR = A +
\IZ$ with $A$ finite it follows that for each $a\in A$ there exist
integers $n\geq 1$ and $k\in\IZ$ such that $F^n(a)=a+k$. This means
that all points in $A$ are periodic {\modi}.
\end{proof}

\begin{rema}
 While Lemma~\ref{lem:T'} holds for any lifted space except for the
statement that $\Clos{\TR} \in \InfG$,
Lemma~\ref{lem:periodic-endpoints} is only true for infinite graphs
from $\InfG$. The fact that $T\setminus\TR = A + \IZ$ \emph{being $A$
finite} is not true in general, when we remove the assumption that
$T\in\InfG$. If $T$ is an infinite tree, then $A$ is a subset of the
endpoints of $T$, but this may not be the case for any infinite graph.
\end{rema}

Now we are ready to state the main result of this subsection.

\begin{theo}\label{theo:RR(F)=R(F)}
Let $T\in\InfG$ and $F\in\Li$. If $\Clos{\TR}=T$ then
$\RotR(F)=\Rot(F)=\Rot^+(F)=\Rot^-(F)$.
\end{theo}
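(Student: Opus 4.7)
Because $\RotR(F)\subseteq\Rot(F)\subseteq\Rot^+(F)\cap\Rot^-(F)$ is automatic and Theorem~\ref{theo:RR-closed-interval} already provides $\RotR^+(F)=\RotR^-(F)=\RotR(F)=:[\alpha_-,\alpha_+]$, the theorem reduces to proving $\Rot^+(F),\Rot^-(F)\subseteq\RotR(F)$. I would fix $y\in T$ and argue that $\orhos(y)\in\RotR(F)$; the statement for $\urhos(y)$ follows by the same token, and is even automatic in Case~2 below, where $\orhos(y)=\urhos(y)$. The argument splits according to whether $y\in\TR$ or $y\in T\setminus\TR$.

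If $y\in\TR$, I would write $y=F^n(x)$ for some $x\in\IR$ and $n\ge0$; a direct $\limsup$ computation---identical to the one behind the proof of Proposition~\ref{prop:RR(F)=R(F)}---gives $\orhos(y)=\orhos(x)\in\RotR^+(F)=\RotR(F)$. The essential new content concerns $y\in T\setminus\TR$: Lemma~\ref{lem:periodic-endpoints} says that $y$ is periodic {\modi}, so $F^q(y)=y+p$ for some $q\in\IN$, $p\in\IZ$, whence $\orhos(y)=\urhos(y)=\rhos{F}(y)=p/q$, and the task reduces to showing $p/q\in[\alpha_-,\alpha_+]$. Since $\IR\subseteq\TR$ forces $y\notin\IR$, the point $y$ lies in a connected component $C$ of $T\setminus\IR$ on which the retraction $r$ is constantly $r_0:=r(y)$. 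Set $G:=F^q-p$, so $G(y)=y$; using the iterated continuity of $F$ along the finite orbit $y,F(y),\ldots,F^{q-1}(y)$, for each $j\in\IN$ I would build an open neighbourhood $V_j\subseteq C$ of $y$ satisfying $G^i(V_j)\subseteq C$ for every $0\le i\le j$. The density hypothesis $\Clos{\TR}=T$ then produces $u_j\in V_j\cap\TR$, which I would write as $u_j=F^{m_j}(z_j)$ with $z_j\in\IR$ translated into $[0,1]$. Using the identity $G^j=F^{qj}-jp$ from Lemma~\ref{lem:degree1-Fn}(b) together with $r\equiv r_0$ on $C$,
\[
 r\bigl(F^{qj+m_j}(z_j)\bigr)=r\bigl(G^j(u_j)\bigr)+jp=r_0+jp
 \quad\text{and}\quad
 r\bigl(F^{m_j}(z_j)\bigr)=r(u_j)=r_0,
\]
so the displacement ratio of $z_j$ at the single instant $N_j:=qj+m_j$ equals $(jp+O(1))/(qj+m_j)$. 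This value automatically lies between $\urhos(z_j)$ and $\orhos(z_j)$, both of which belong to $[\alpha_-,\alpha_+]$ by Theorem~\ref{theo:RR-closed-interval}; letting $j\to\infty$ will therefore pinch $p/q$ into $[\alpha_-,\alpha_+]$---\emph{provided} the transit times satisfy $m_j=o(j)$.

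The main obstacle is exactly establishing this sublinear growth bound on the $m_j$. Since $y\notin\TR$ and each $F^m(\IR)$ is closed (Lemma~\ref{lem:T'}(a)), $m_j$ necessarily tends to $\infty$ as $V_j$ shrinks; to quantify its rate I would study the transit function $N_\varepsilon(y):=\min\{m\ge0:d(y,F^m(\IR))\le\varepsilon\}$, which is upper semi-continuous and, by density of $\TR$ together with compactness of the fundamental domain $r^{-1}([0,1])$, admits a uniform bound $M_\varepsilon<\infty$ on that domain. Coupling $M_\varepsilon$ with the continuity moduli of the finitely many iterates $F,F^2,\ldots,F^q$ along the orbit of $y$---all quantitatively tractable because Lemma~\ref{lem:periodic-endpoints} supplies only finitely many such orbits modulo $\IZ$---should yield $m_j=o(j)$ and close the argument.
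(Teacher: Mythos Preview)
Your reduction to showing $\Rot^\pm(F)\subseteq\RotR(F)$, and your treatment of the case $y\in\TR$, are correct and match the paper. For $y\in T\setminus\TR$, however, the argument has two independent gaps, and neither is a matter of missing detail.

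\textbf{The single-instant ratio is not trapped between $\urhos$ and $\orhos$.} The assertion that $(r(F^{N_j}(z_j))-z_j)/N_j$ ``automatically lies between $\urhos(z_j)$ and $\orhos(z_j)$'' is simply false: a term of a sequence need not lie between its $\liminf$ and its $\limsup$ (take $a_1=10$, $a_n=0$ for $n\ge2$). Nothing prevents the orbit of $z_j$ from making a large excursion during its first $m_j$ steps and compensating afterwards; a single observation at time $N_j$ carries no asymptotic information. Since the $z_j$ vary with $j$, you cannot cure this by passing to a limit along a fixed orbit, and replacing $[\urhos(z_j),\orhos(z_j)]$ by the interval $[\rho(F_l),\rho(F_u)]$ of Corollary~\ref{lem:RUpperBound} (which \emph{does} control finite ratios up to a bounded error) is useless, because that interval may strictly contain $\RotR(F)$.

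\textbf{The bound $m_j=o(j)$ typically fails.} When $y$ is a repelling fixed point of $G=F^q-p$ with local expansion $\lambda>1$ (the generic situation for a transitive Markov example such as Example~\ref{ex:gaps-in-Per(p/q)}), enforcing $G^i(V_j)\subseteq C$ for all $i\le j$ forces $\mathrm{diam}(V_j)\lesssim\lambda^{-j}$. But the sets $V_y\setminus G^n(\IR)$ shrink at the \emph{same} exponential rate near $y$, so $F^m(\IR)=G^{\lfloor m/q\rfloor}(\IR)$ first meets $V_j$ only when $m\approx qj$. Then $N_j\approx 2qj$ and your ratio tends to $p/(2q)$, not $p/q$. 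Your transit-function sketch invokes only the moduli of continuity of $F,\dots,F^q$, but the size of $V_j$ is dictated by $G^j$, whose modulus degrades exponentially in $j$; that is precisely why the two scales match and $m_j/j$ does not tend to zero.

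The paper sidesteps both issues by abandoning the finite-time pinching idea altogether. Instead of trying to approximate $p/q$, it \emph{constructs} a point $x\in\IR$ whose orbit shadows that of $a$ forever, so that $\rhos{F}(x)=\rhos{F}(a)$ exactly. The mechanism is a covering relation between successive annuli: writing $V_a^n:=V_a\setminus G^n(\IR)$ for a suitable neighbourhood $V_a$ and a suitable iterate $G$, one proves
\[
G\bigl(\Clos{V_a^{n-1}\setminus V_a^n}\bigr)\ \supset\ \Clos{V_a^n\setminus V_a^{n+1}}+i_a
\qquad\text{for all large }n,
\]
checks that each annulus is nonempty, and then obtains by nested intersection a point $x\in\IR$ with $(G-i_a)^n(x)\in\Clos{V_a^{n-1}\setminus V_a^n}$ for all large $n$. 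This is the idea you are missing.
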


\begin{proof}
By Lemma~\ref{lem:periodic-endpoints}, we can write $T\setminus\TR$ as
$A + \IZ$ where $A$ is finite, the sets $\{A+n\}_{n\in\IZ}$ are
pairwise disjoint, and there exist $k \in \IN$ which is common to all
elements of $A$ and integers $\{i_a\}_{a\in A}$ such that for all
$a\in A$, $F^k(a) = a + i_a$. Hence, the rotation number of every
$a\in A$ exists and we have $\rhos{F}(a) = i_a/k$.

Clearly,
\[
\Rot(F)= \set{\rhos{F}(x)}{x\in \TR} \cup \set{\rhos{F}(x)}{x\in A},
\]
and the same holds for the upper and lower rotation numbers. If
$y\notin A$, then there exist $x\in\IR$ and $n\geq 0$ such that
$F^n(x)=y$. Thus $\urhos(x)=\urhos(y)$ and $\orhos(x)=\orhos(y)$. This
implies that $\RotR(F)=\set{\rhos{F}(x)}{x\in \TR}$, and the same
holds  for the upper and lower rotation numbers. According to
Theorem~\ref{theo:RR-closed-interval},
$\RotR(F)=\Rot^+_{\IR}(F)=\Rot^-_{\IR}(F)$. Hence
\[
\Rot(F)=\Rot^+(F)=\Rot^-(F)=\RotR(F)\cup\set{\rhos{F}(x)}{x\in A}.
\]
Therefore it remains to prove that for every $a\in A$ there exists
$x\in\IR$ such that $\rhos{F}(x)=\rhos{F}(a)$. We are going to find a
point $x\in \IR$ whose orbit is attracted by $a$.

In the rest of the proof the map $F^k$ will be denoted by $G$ so that
$G(a) = a + i_a$ for all $a \in A$ (in particular $\rhos{G}(a) =
i_a$). For each $a \in A$ choose neighbourhoods $V_a \subset W_a$ of
$a$ such that $(W_a+\IZ) \cap (W_{a'}+\IZ) = \emptyset$ whenever $a
\ne a'$ (this is possible because the sets
$\{A+n\}_{n\in\IZ}$ are pairwise disjoint)
and $G(V_a)\subset W_a+i_a$ for all $a \in A$.

Since {\TR} is an increasing union by Lemma~\ref{lem:T'}(b), we also
have
\[
\Clos{\bigcup_{n\geq 0} G^n(\IR)}=T.
\]
Thus, there exists a positive integer $N$ such that
$T\setminus G^n(\IR)\subset \bigcup_{a\in A} (V_a+\IZ)$ for all
$n \geq N$. Let $V_a^n = V_a\setminus G^n(\IR)$. Again by
Lemma~\ref{lem:T'}(b), $\{V_a^n\}_{n\geq N}$ is a decreasing sequence
of sets containing $a$ and $\bigcap_{n\geq N} V_a^n=\{a\}$ because $a$
is in
$\Clos{\bigcup_{n\geq 0}G^n(\IR)}$
but not in
$\bigcup_{n\geq 0} G^n(\IR)$.
For all $n\geq N$, we have
\begin{equation}\label{eq:Gn}
G^n(\IR) = G^{n-1}(\IR) \amalg \left(\coprod_{a\in A}
      \bigl(V_a^{n-1}\setminus V_a^n\bigr) + \IZ
  \right),
\end{equation}
where $\amalg$ denotes disjoint union. If we apply $G$ once more to
Equation~\eqref{eq:Gn} we get
\begin{equation}\label{eq:GGn}
G^{n+1}(\IR)=G^n(\IR) \cup G\left(\coprod_{a\in A}
        \bigl(V_a^{n-1}\setminus V_a^n\bigr) + \IZ
  \right).
\end{equation}
From Equation~\eqref{eq:Gn} for $n+1$ and Equation~\eqref{eq:GGn}, we
deduce:
\[
G\left(\bigcup_{a\in A}
  \bigl(V_a^{n-1}\setminus V_a^n\bigr) +\IZ
\right)\supset \coprod_{a\in A}
  \bigl(V_a^n\setminus V_a^{n+1}\bigr)+\IZ.
\]
Since $G(V_a)\subset W_a+i_a$, the images by $G$ of the sets
$\{V_a^{n-1}\setminus V_a^n\}_{a\in A}$ are pairwise disjoint and
$G(V_a^{n-1}\setminus V_a^n)$ is the only one that intersects
$W_a+\IZ$. Moreover $V_a^n\setminus V_a^{n+1}\subset W_a$. Hence
$G(V_a^{n-1}\setminus V_a^n)\supset \bigl(V_a^n\setminus
V_a^{n+1}\bigr) + i_a$ for every $a\in A$. By compactness we have
\begin{equation}\label{eq:GVa}
 G\left(\Clos{V_a^{n-1}\setminus V_a^n}\right)
\supset \Clos{V_a^n\setminus V_a^{n+1}}+i_a
\end{equation}
for all $n\geq N$ and $a\in A$.

Let $a\in A$. If $V_a^{k-1}\setminus V_a^k=\emptyset$ for some
$k\geq N$, then $V_a^{n-1}\setminus V_a^n=\emptyset$ for all $n\geq
k$, by Equation~\eqref{eq:GVa}. Thus $V_a^n=V_a^k$ for all $k=n\geq k$
and $V_a^k=\bigcap_{n\geq k}V_a^n=\{a\}$. This contradicts the fact
that $G^k(\IR)$ is closed by Lemma~\ref{lem:T'}(a). Consequently,
$V_a^{n-1} \setminus V_a^n \neq \emptyset$ for all $n\geq N$ and, by
Equation~\eqref{eq:GVa}, there exists $x\in\IR$ such that
\[
  (G-i_a)^n(x) \in \Clos{V_a^{n-1}\setminus V_a^n}
  \quad\text{for all}\quad
  n\geq N+1.
\]
This implies that $\rhos{G}(x)=i_a=\rhos{G}(a)$, that is,
$\rhos{F}(x)=\rhos{F}(a)$ in view of
Lemma~\ref{lem:first-properties}(c). This shows that
$\set{\rhos{F}(a)}{a\in A} \subset \RotR(F)$ which concludes the
proof.
\end{proof}

\begin{rema}
If $\bigcup_{n\geq 1}F^{-n}(\IR) \cup \Clos{\TR} = T$ then the
conclusion of Theorem~\ref{theo:RR(F)=R(F)} remains valid. However,
the theorem does not hold with the assumption that
$\Clos{\bigcup_{n\in\IZ} F^n(\IR)} = T$ (see
Example~\ref{ex:R-not-connected}).
\end{rema}

We deduce from Theorem~\ref{theo:RR(F)=R(F)} that for infinite graphs,
$\RotR(F)$ is the rotation set of an $F$-invariant infinite graph
contained in $T$.

\begin{coro}
Let $T\in\InfG$ and $F\in\Li$. Then $\RotR(F)=\Rot(F|_{\Clos{\TR}})$.
\end{coro}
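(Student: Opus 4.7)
The plan is to apply Theorem~\ref{theo:RR(F)=R(F)} to the restricted map $F|_{\Clos{\TR}}$ rather than to $F$ itself, after verifying that the restricted setting satisfies the hypotheses of that theorem.

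First I would check that $F|_{\Clos{\TR}}$ is a legitimate object in $\Li[\Clos{\TR}]$. By Lemma~\ref{lem:T'}(b) we have $\Clos{\TR} \in \InfG$, and by Lemma~\ref{lem:T'}(c) we have $F(\Clos{\TR}) = \Clos{\TR}$, so $F|_{\Clos{\TR}}$ is a continuous self-map of $\Clos{\TR}$. Since $\IR \subset \Clos{\TR}$ and the identification $h(\IR) = \IR$ for $\Clos{\TR}$ is the same as for $T$, the map $F|_{\Clos{\TR}}$ still has degree $1$; that is, $F|_{\Clos{\TR}} \in \Li[\Clos{\TR}]$. Moreover, the natural retraction for $\Clos{\TR}$ is the restriction of the retraction $r$ of $T$, because each connected component of $\Clos{\TR} \setminus \IR$ is contained in a connected component of $T \setminus \IR$, and both retract to the same point of $\IR$.

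Next I would observe that the rotation numbers $\rhos{F}(x)$ for $x \in \IR$ depend only on the orbit $\{F^n(x)\}_{n\ge 0} \subset \TR \subset \Clos{\TR}$ and on $r|_{\Clos{\TR}}$. Hence
\[
\RotR(F) = \RotR(F|_{\Clos{\TR}}).
\]
Now I would apply Theorem~\ref{theo:RR(F)=R(F)} to the map $F|_{\Clos{\TR}}$ on the space $\Clos{\TR}\in\InfG$. The analogue of $\TR$ for this restricted map is
\[
\bigcup_{n\geq 0}(F|_{\Clos{\TR}})^n(\IR)=\bigcup_{n\geq 0}F^n(\IR)=\TR,
\]
and its closure taken inside $\Clos{\TR}$ is all of $\Clos{\TR}$. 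Thus the hypothesis $\Clos{\TR}=\Clos{\TR}$ of the theorem is tautologically satisfied, yielding
\[
\RotR(F|_{\Clos{\TR}})=\Rot(F|_{\Clos{\TR}}).
\]
Combining the last two displays gives $\RotR(F) = \Rot(F|_{\Clos{\TR}})$, which is the claim.

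There is no real obstacle here: the entire content of the corollary is that Theorem~\ref{theo:RR(F)=R(F)} can be transported to the invariant subspace $\Clos{\TR}$. The only step requiring a moment's care is ensuring that the retraction, the degree, and hence the notion of rotation number for points of $\IR$ all coincide whether one works in $T$ or in $\Clos{\TR}$, which is immediate from the fact that $\IR \subset \Clos{\TR}$ and that $\Clos{\TR}$ is invariant under $F$.
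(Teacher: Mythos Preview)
Your proof is correct and follows essentially the same approach as the paper's own proof: verify via Lemma~\ref{lem:T'} that $F|_{\Clos{\TR}}\in\Li[\Clos{\TR}]$, apply Theorem~\ref{theo:RR(F)=R(F)} to the restricted map, and combine with the observation $\RotR(F)=\RotR(F|_{\Clos{\TR}})$. Your version is in fact more careful than the paper's in spelling out why the retraction and degree agree and why the density hypothesis of Theorem~\ref{theo:RR(F)=R(F)} is satisfied for the restricted map.
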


\begin{proof}
Set $\overline{T} := \Clos{\TR}$. By Lemma~\ref{lem:T'}(c)
$\overline{T} \in \InfG$ and the map
$\map{F|_{\overline{T}}}{\overline{T}}$
belongs to $\Li[\overline{T}]$. Then, Theorem~\ref{theo:RR(F)=R(F)}
implies that the sets $\Rot(F|_{\overline{T}})$ and
$\RotR(F|_{\overline{T}})$ coincide. Also,
$\RotR(F|_{\overline{T}})=\RotR(F)$ and the corollary follows.
\end{proof}

\subsection{Periodic {\modi} points associated to the endpoints of
\boldmath $\RotR(F)$ for infinite graph maps}\label{subsec:endpoints}

In Subsection~\ref{sec:periods-X}, we dealt with the rational rotation
numbers in the interior of $\RotR(F)$. In this subsection we are going
to show that for an infinite graph map there exist periodic {\modi}
points whose rotation numbers are equal to $\min \RotR(F)$ (resp.
$\max \RotR(F)$) provided that it is a rational number. This will be
proved in the main result of this subsection
(Theorem~\ref{theo:endpoints}). However, before stating and proving
this result in detail, we will introduce the necessary machinery. It
will consist in the notion of a \emph{direct path to $+\infty$}. One
of the crucial points of the notation that we will introduce is the
construction of a direct version of a given (non direct) path going to
$+\infty$. Then we will devote to three technical lemmas to study the
properties of this kind of paths and to prepare the proof of the basic
technical result of this subsection (Lemma~\ref{lem:R>0}) that gives
sufficient conditions to assure that all points in the rotation set
are positive. This is the key tool in proving
Theorem~\ref{theo:endpoints}.

In the rest of this subsection, we fix an infinite graph $T\in\InfG$
and we let
\[
 X=\Clos{\set{x\in T}{0\leq r(x)<1}\setminus \IR}.
\]
Since $X$ is a finite union of finite graphs, we can write
$X=\bigcup_{\lambda\in\Lambda} I_{\lambda}$ where $\Lambda$ is a
finite set of indices, $I_{\lambda}$ is a set homeomorphic to a closed
non degenerate interval of the real line, $I_{\lambda}$ contains no
vertex except maybe its endpoints and the intersection of two
different sets $I_{\lambda}$, $I_{\lambda'}$ contains at most one
point. Each interval of the form $I_{\lambda}+n$ with
$\lambda\in\Lambda$ and $n\in\IZ$ will be called a \emph{basic
interval}.

We first formalise the idea that in $T$ there are only finitely many
``direct ways'' to go from a basic interval towards $+\infty$. A
\emph{direct path} is a path without loops or returns backwards. For
technical reasons, a direct path is allowed to remain constant on an
interval.

\begin{defi}
A \emph{path from $x_0\in T$ to $+\infty$} is a continuous map
$\map{\gamma}{[0,+\infty)}[T]$ such that $\gamma(0)=x_0$ and
$\lim_{t\to +\infty}r\circ \gamma(t)=+\infty$. Such a path is called
\emph{direct} if, in addition, it verifies the following condition
\[
\text{if $\gamma(t)=\gamma(t')$ for some $t \in [0,t']$, then
$\gamma\evalat{[t,t']}$ is constant.}\tag{DP}
\]
\end{defi}

\begin{rema}
If $\map{\gamma}{[0,+\infty)}[T]$ is a direct path to $+\infty$, then
there exists $t$ such that $\gamma(t')\in\IR$ for all $t'\geq t$. This
is due to the fact that if a path leaves $\IR$ at some point $z$ then
$z \in \B(T)$ and, by Definition~\ref{def:liftedspace}(ii), the path
must return to $\IR$ through the same point $z$. Then, clearly, such a
path does not verify Condition~(DP) and, hence, it is not direct.

Note also that $\gamma([0,+\infty))$ is homeomorphic to $[0,+\infty)$.
\end{rema}

In view of the previous remark, when $\gamma$ is a direct path we can
define an ordering $<_{\gamma}$ on the path $\gamma([0,+\infty))$ such
that it coincides with the order of $\IR$ on the half-line
$\gamma([0,+\infty))\cap \IR$ as follows. If
$x,x'\in \gamma([0,+\infty))$, $x \neq x'$,
then we write $x <_{\gamma} x'$ if and only if $x=\gamma(t)$ and
$x'=\gamma(t')$ with $t \in [0, t').$ The symbols
$\leq_{\gamma},\ >_{\gamma},$ and $\geq_{\gamma}$
are then defined in the obvious way.

\begin{rema}\label{rem:dirisnonicr}
If $\gamma$ is a direct path then
$\map{\gamma}{[0,+\infty)}[\gamma([0,+\infty))]$ is a non decreasing
map with respect to the ordering $\le_{\gamma}$ in the image
$\gamma([0,+\infty))$.
\end{rema}

Let $x_0,x_0'\in T$ and let $\gamma$ and $\gamma'$ be two direct paths
from $x_0,x_0'$ to $+\infty$. We say that $\gamma,\gamma'$ are
\emph{comparable} if, either $\gamma'([0,+\infty))\subset
\gamma([0,+\infty))$, or $\gamma([0,+\infty))\subset
\gamma'([0,+\infty))$. In the first situation, $x_0\leq_{\gamma}x_0'$,
that is, $x_0'$ is ``on the way'' between $x_0$ and $+\infty$. The
second situation is symmetric.

We will be interested in comparing direct paths starting in the same
basic interval.

\begin{lemm}\label{lem:nbclasses}
The relation of comparability is an equivalence relation among all
direct paths to $+\infty$ starting in the same basic interval.
Moreover, the set of equivalence classes of such paths for the
comparability relation is finite.
\end{lemm}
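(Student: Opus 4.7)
The plan splits into verifying that comparability is an equivalence relation and then establishing finiteness.

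Reflexivity and symmetry of comparability are immediate from the definition, since $\gamma([0,+\infty))\subset\gamma([0,+\infty))$ is trivial and the defining disjunction is symmetric in $\gamma,\gamma'$. The main content is transitivity, and the key technical step I would prove first is the following: if $\gamma,\gamma'$ are direct paths to $+\infty$ with $\gamma'([0,+\infty))\subset\gamma([0,+\infty))$, then $\gamma'([0,+\infty))$ equals the tail $\{y\in\gamma([0,+\infty)):\gamma'(0)\leq_{\gamma} y\}$. Indeed, $\gamma([0,+\infty))$ is a topological half-line under $\leq_{\gamma}$, and $\gamma'([0,+\infty))$ is a connected subset of this half-line containing $\gamma'(0)$ and unbounded in $r$, hence it fills the entire upper tail from $\gamma'(0)$.

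Given $\gamma_1\sim\gamma_2$ and $\gamma_2\sim\gamma_3$ with all starting points in the same basic interval $I_\lambda$, I would analyze the possible containment patterns. Nested chains such as $\gamma_1\subset\gamma_2\subset\gamma_3$ yield $\gamma_1\sim\gamma_3$ directly, and the case $\gamma_1,\gamma_3\subset\gamma_2$ follows by comparing $\gamma_1(0)$ and $\gamma_3(0)$ under $\leq_{\gamma_2}$ and applying the tail characterization. The main obstacle is the remaining case, where $\gamma_2\subset\gamma_1$ and $\gamma_2\subset\gamma_3$: here $\gamma_1$ and $\gamma_3$ share the common tail from $\gamma_2(0)$ onward, so transitivity reduces to nesting their initial segments from $\gamma_i(0)\in I_\lambda$ to the common junction $\gamma_2(0)$. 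Here the hypothesis that both starting points lie in the same basic interval $I_\lambda$ should be essential: since $I_\lambda$ is homeomorphic to a closed interval, its one-dimensional order together with the common endpoint $\gamma_2(0)$ of both initial segments should force these initial segments to be nested inside the larger image.

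For the finiteness claim, I would exploit the structural observation that any direct path $\gamma$ to $+\infty$ reaches $\IR$ at some finite time $t^{*}$ and thereafter stays in $\IR$ (by the remark following the definition of direct paths); moreover, directness combined with Definition~\ref{def:liftedspace}(ii) (each connected component of $T\setminus\IR$ meets $\IR$ at a single point) forces $\gamma$ to proceed monotonically along $\IR$ towards $+\infty$ without re-entering any branch after $t^{*}$. Consequently, the image of $\gamma$ decomposes into an initial arc inside the compact branch containing $I_\lambda$, followed by the monotone ray $[\gamma(t^{*}),+\infty)\subset\IR$. Because $T\in\InfG$, this branch is a finite graph, which admits only finitely many embedded arcs joining $I_\lambda$ to the branch attachment point $v\in\IR$. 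Two direct paths using the same such arc are comparable through the nesting of their initial segments inside $I_\lambda$, and hence there are only finitely many equivalence classes.
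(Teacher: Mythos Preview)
Your approach is sound but different from the paper's. The paper avoids the transitivity case analysis entirely by constructing, for each direct path $\gamma$ starting in $I_\lambda+n$, the finite sequence $(\lambda_0(\gamma),\lambda_1(\gamma),\ldots,\lambda_k(\gamma))$ of indices of basic intervals successively visited before the path reaches $r(x_0)$. Directness forces each $\lambda\in\Lambda$ to occur at most once, so only finitely many such sequences exist. The paper then observes that two direct paths starting in $I_\lambda+n$ are comparable \emph{if and only if} their sequences $(\lambda_i)_{i\ge 1}$ coincide. This single biconditional simultaneously yields that comparability is an equivalence relation (it is the pullback of equality) and that the number of classes is finite. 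Your finiteness argument via embedded arcs is essentially a rephrasing of this.

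Your direct transitivity argument works, but the hardest case ($\gamma_2\subset\gamma_1$ and $\gamma_2\subset\gamma_3$) is not quite complete as written. Two subintervals of $I_\lambda$ sharing the endpoint $\gamma_2(0)$ need not be nested: they could lie on opposite sides. You also need that the initial segments actually stay inside $I_\lambda$. Both points follow from directness: the path $\gamma_2$ exits $I_\lambda$ through a definite endpoint $e$, and since $\gamma_i$ (for $i=1,3$) continues as $\gamma_2$ after $\gamma_2(0)$, directness of $\gamma_i$ forces $\gamma_i(0)$ to lie on the side of $\gamma_2(0)$ opposite $e$ (otherwise $\gamma_i$ would revisit $\gamma_i(0)$), and prevents $\gamma_i$ from leaving $I_\lambda$ before reaching $\gamma_2(0)$ (otherwise it would revisit an endpoint of $I_\lambda$). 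With these observations your nesting claim holds and the argument is complete. The paper's encoding sidesteps this fiddly geometry.
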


\begin{proof}
Let $I_{\lambda}+n$ be a
basic interval and assume that $\gamma$ is a direct path from some
$x_0\in I_{\lambda}+n$ to $+\infty$.

Set $\lambda_0(\gamma)=\lambda$, $t_0=0$ and
$t_1 = \max\set{t\geq t_0}{\gamma([t_0,t])
          \subset I_{\lambda_0(\gamma)} + n}$.
Clearly, $\gamma(t_1)$ is an endpoint of $I_{\lambda}+n$.
Now we define inductively two finite sequences
$\{\lambda_i(\gamma)\}_{0\leq i\leq k}$
and
$\{t_i\}_{0\leq i\leq k+1}$
in the following way. If
$\gamma(t_i)=r(x_0)$ then $\gamma([t_i,+\infty))\subset \IR$
and we stop the construction. Otherwise, there exists
$\lambda_{i}(\gamma) \neq \lambda_{i-1}(\gamma)$ and $\eps>0$
such that
$\gamma(t_i) \in
   (I_{\lambda_{i-1}(\gamma)} \cap I_{\lambda_i(\gamma)}) +n$
and
$\gamma([t_i,t_i+\eps]) \subset I_{\lambda_i(\gamma)}+n$.
Then we can define
$t_{i+1} = \max\set{t\geq t_i}{
   \gamma([t_i,t]) \subset I_{\lambda_i(\gamma)} + n}$.

Since $\gamma$ is a direct path, each $\lambda\in\Lambda$ appears at
most once in the sequence of $\lambda_i(\gamma)$'s. Therefore the
construction ends and the number of possible sequences
$\{\lambda_i(\gamma)\}_i$ is finite.

The set $\gamma([0,t_1])$ is a subinterval of $I_{\lambda}+n$ with
endpoints $x_0$ and $(I_{\lambda} \cap I_{\lambda_1(\gamma)})+n$.
Moreover, for every
$i=1,\dots,k$, $\gamma([t_i,t_{i+1}]) = I_{\lambda_i(\gamma)}+n$,
and
$\gamma([t_{k+1},+\infty))=[r(x_0),+\infty)$.
If $\gamma'$ is another direct path from some point in $I_{\lambda}+n$
to $+\infty$, then $\gamma'$ is comparable with $\gamma$ if and only
if
$
\{\lambda_i(\gamma)\}_{1 \leq i \leq k} =
\{\lambda_i(\gamma')\}_{1 \leq i \leq k'}.
$
Therefore, comparability is an equivalence relation among the direct
paths starting in $I_{\lambda}+n$, and the number of equivalence
classes of direct paths to $+\infty$ starting at $I_{\lambda}+n$ by
the comparability relation is finite.
\end{proof}

\medskip
In the next definition, we associate to a path $\gamma$ to $+\infty$ a
direct path $\widetilde{\gamma}$ to $+\infty$ by cutting all loops and
returns backwards of $\gamma$. In some sense, $\widetilde{\gamma}$
``globally follows'' the path $\gamma$ but goes directly towards
$+\infty$. Figure~\ref{fig:gammatilde} illustrates this definition.

\begin{figure}[htb]
\centerline{\includegraphics[width=25pc]{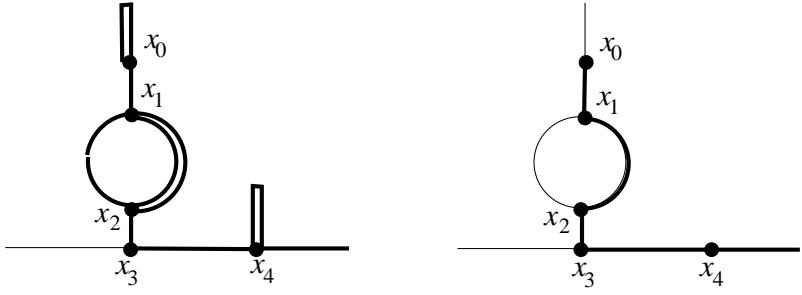}}
\caption{A path $\gamma$ on the left, and the associated direct path
$\widetilde{\gamma}$ on the right. With the notation of
Definition~\ref{def:gammatilde}, one has
$x_0=\gamma(t_0)=\gamma(t_0')=\widetilde{\gamma}\evalat{[t_0,t_0']}$,
$x_1=\gamma(t_1)=\gamma(t_1')=\widetilde{\gamma}\evalat{[t_1,t_1']}$,
$x_2=\gamma(t_2)=\widetilde{\gamma}(t_2)$,
$x_3=r(x_0)=\gamma(t^*)=\widetilde{\gamma}(t^*)$,
with
$0=t_0<t_0'<t_1<t_1'<t_2=t_2'<t_3=t^*$.\label{fig:gammatilde}}
\end{figure}

\newcommand{\ordint}[1]{\,\unlhd_{#1}}
\begin{defi}\label{def:gammatilde}
Let $x_0 \in X+n$, $n\in \IZ$ and let $\gamma$ be a path from $x_0$ to
$+\infty$. We define a path $\widetilde{\gamma}$ as follows.

First we set
\[
t^* = \min \set{t\in [0,+\infty)}{r\circ \gamma(t) = r(x_0)}
\]
and we define
\[
\widetilde{\gamma}(t) = \max r\circ \gamma([t^*,t])
\text{ for all $t\in [t^*,+\infty)$.}
\]
Observe that $\widetilde{\gamma}\evalat{[t^*,+\infty)}$ is a non
decreasing map from the interval $[t^*,+\infty)$ onto $[r(x_0),
+\infty) \subset \IR$. If $x_0 \in \IR$ then $t^* = 0$ and
$\widetilde{\gamma}$ is already defined.
Otherwise, $t^* > 0$ and $\gamma([0,t^*])$ is a path
contained in the connected component of $X+n$ containing $x_0$. Now we
inductively define $\widetilde{\gamma}\evalat{[0,t^*]}$.
\begin{itemize}
\item[\textbf{Step 0.}] Set $t_0=0$ and let $\lambda_0 \in\Lambda$ be
such that
$x_0\in I_{\lambda_0}+n$. We define
\[
 t_0' := \max\set{t\in[t_0,t^*]}{\gamma(t)=\gamma(t_0)}
\]
and
\[
\widetilde{\gamma}(t) := \gamma(t_0) = \gamma(t_0')
\text{ for all $t\in [t_0,t_0']$.}
\]
Observe that $t_0' < t^*$ (otherwise $x_0 = \gamma(t_0') = r(x_0) \in
\IR$).
Let
\[
 t_1 := \max\set{t\in[t_0',t^*]}{\gamma([t_0',t])
             \subset I_{\lambda_0}+n}.
\]
In this situation $\gamma(t_1)$ is an endpoint of
$I_{\lambda_0}+n$. Since $\gamma([t_0',t_1]) \subset I_{\lambda_0}+n$
we can define a linear ordering $\ordint{\lambda_0}$ in
$I_{\lambda_0}+n$ such that $\gamma(t_0') \ordint{\lambda_0}
\gamma(t_1)$. Now, for all $t \in [t_0',t_1]$, we define
\[
\widetilde{\gamma}(t) := \max \gamma([t_0',t]),
\]
where the maximum is taken with respect to the ordering
$\ordint{\lambda_0}$. The map
\[
\map{\widetilde{\gamma}\evalat{[t_0',t_1]}}{I_{\lambda_0}+n}
\]
is non decreasing for $\ordint{\lambda_0}$. Moreover, by the choice of
$t_1$, $\widetilde{\gamma}(t_1) = \gamma(t_1)$.

If $t_1=t^*$ then $\widetilde{\gamma}$ is already defined for all
$t\geq 0$. Otherwise we proceed to the step $k=1$.

\item[\textbf{Step k.}]
Suppose that we have already defined
$t_0\leq t_0' \le t_1 \leq t_1' < t_2\leq t_2'\leq \cdots
t_{k-1}'<t_k<t^*$ and
$\lambda_0,\lambda_1, \dots,\lambda_{k-1} \in \Lambda$
verifying the following properties:
\begin{enumerate}[(i)]
\item $t_i'=\max\set{t\in[t_i,t^*]}{\gamma(t)=\gamma(t_i)}$
for all $0\leq i\leq k-1$,
\item $\gamma(t_{i+1})$ is an endpoint of $I_{\lambda_i}+n$ for all
$0\leq i\leq k-1$,
\item $\gamma([t_i',t_{i+1}])=I_{\lambda_i}+n$ for all $1\leq i\leq
k-1$,
\item $\lambda_0,\lambda_1, \dots,\lambda_{k-1}$ are all
different.
\end{enumerate}
First, let $t_k'$ be the real number given by (i) for $k$. We define
\[
\widetilde{\gamma}(t) := \gamma(t_k) = \gamma(t_k')
\text{ for all $t\in [t_k,t_k']$.}
\]
The definition of $t_k'$ implies that there exists a unique
$\lambda_k\in\Lambda$ such that $\gamma([t_k',t_k'+\eps])\subset
I_{\lambda_k}+n$ for $\eps>0$ small enough. Since $\gamma(t_k)$ is an
endpoint of $I_{\lambda_{k-1}}+n$ by (ii), we get in addition that
$\lambda_k\not=\lambda_{k-1}$ and $\gamma(t_k')=\gamma(t_k)$ is an
endpoint of $I_{\lambda_k}+n$.
Let
\[
 t_{k+1} := \max\set{t\in[t_k',t^*]}{\gamma([t_k',t])
             \subset I_{\lambda_k}+n}.
\]
The choice of $\lambda_k$ implies that $t_{k+1}>t_k'$. This definition
implies that $\gamma(t_{k+1})$ is an endpoint of $I_{\lambda_k}+n$,
which gives (ii) for $k$. In addition, $\gamma(t_{k+1})$ is not equal
to the other endpoint $\gamma(t_k')$, and thus we get (iii) for $k$.

In this situation, we can define, as in step (0), a linear ordering
$\ordint{\lambda_k}$ in $I_{\lambda_k}+n$ such that $\gamma(t_k')
\,\lhd_{\lambda_k} \gamma(t_{k+1})$. Now, for all $t \in
[t_k',t_{k+1}]$, we define
\[
\widetilde{\gamma}(t) := \max \gamma([t_k',t]),
\]
where the maximum is taken with respect to the ordering
$\ordint{\lambda_k}$. As in Step~0,
$\widetilde{\gamma}\evalat{[t_k',t_{k+1}]}$ is non decreasing for
$\ordint{\lambda_k}$. Also, the the choice of $t_{k+1}$
implies that
$\widetilde{\gamma}(t_{k+1}) = \gamma(t_{k+1})$.

Suppose that $\lambda_k=\lambda_i$ for some $0\leq i\leq k-2$. Since
$\gamma(t_k')$ and $\gamma(t_{k+1})$ are the two endpoints of
$I_{\lambda_k}+n$, one of them is equal to
$\gamma(t_{i+1})=\gamma(t_{i+1}')$ by (i-ii). Then the definition of
$t_{i+1}'$ gives a contradiction because $t_{i+1}'<t_k'\leq t_{k+1}$.
Since we have shown above that $\lambda_k\not=\lambda_{k-1}$, this
gives (iv) for $k$ and ends the step $k$. If $t_{k+1}=t^*$ then
$\widetilde{\gamma}$ is already defined for all $t\geq 0$. Otherwise
we proceed to the step $k+1$.
\end{itemize}
According to the property (iv), this construction comes to an end
because $\Lambda$ is finite.
\end{defi}

\begin{rema}
A construction related with the one performed in
Definition~\ref{def:gammatilde} but in a topological framework can be
found in \cite{AMM}.
\end{rema}

The next lemma can be easily deduced from the above construction.

\begin{lemm}
Given a path $\gamma$ to $+\infty$, it follows that the path
$\widetilde{\gamma}$ constructed in Definition~\ref{def:gammatilde} is
a direct path to $+\infty$.
\end{lemm}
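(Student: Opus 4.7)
To prove that $\widetilde{\gamma}$ is a direct path to $+\infty$, I need to establish four things: (i) continuity on $[0,+\infty)$, (ii) $\widetilde{\gamma}(0)=x_0$, (iii) $\lim_{t\to+\infty}r\circ\widetilde{\gamma}(t)=+\infty$, and (iv) Condition~(DP). The construction naturally splits $[0,+\infty)$ into three kinds of pieces: constant phases $[t_k,t_k']$ on which $\widetilde{\gamma}\equiv\gamma(t_k)$; monotone phases $[t_k',t_{k+1}]$ on which $\widetilde{\gamma}$ is non-decreasing for the linear ordering $\ordint{\lambda_k}$ and takes values in $I_{\lambda_k}+n$; and the tail $[t^*,+\infty)$ on which $\widetilde{\gamma}$ is non-decreasing and takes values in $[r(x_0),+\infty)\subset\IR$.

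For continuity, each piece is a running maximum (with respect to the appropriate linear ordering) of a continuous function valued in a space homeomorphic to a real interval, and such a running maximum is standardly continuous. At every joint the values produced by adjacent pieces agree by the very definitions set out in the construction: $\widetilde{\gamma}(t_k)=\widetilde{\gamma}(t_k')=\gamma(t_k)=\gamma(t_k')$ and $\widetilde{\gamma}(t_{k+1})=\gamma(t_{k+1})$ from both sides, while $\widetilde{\gamma}(t^*)=r(x_0)$ from both sides as well. Item~(ii) follows immediately from Step~0, and item~(iii) holds because $\widetilde{\gamma}(t)\ge r\circ\gamma(t)$ on the tail while $r\circ\gamma(t)\to+\infty$ by hypothesis.

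The main obstacle is Condition~(DP). Suppose $\widetilde{\gamma}(s)=\widetilde{\gamma}(s')$ with $s<s'$; I would show that $\widetilde{\gamma}\evalat{[s,s']}$ is constant by splitting into cases according to which pieces $s$ and $s'$ belong to. If both lie in the same piece the appropriate monotonicity immediately yields the conclusion (trivial on a constant phase, natural order on the tail, $\ordint{\lambda_k}$ on a monotone phase). The delicate cross-piece case relies crucially on property~(iv) of the construction: the indices $\lambda_0,\dots,\lambda_{K-1}$ are pairwise distinct, so for $i\ne j$ the basic intervals $I_{\lambda_i}+n$ and $I_{\lambda_j}+n$ meet in at most a single point, and each meets $\IR$ in at most a single point. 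Hence the equality $\widetilde{\gamma}(s)=\widetilde{\gamma}(s')$ across pieces forces the common value to be one of these shared endpoints; the piecewise monotonicity then entails that $\widetilde{\gamma}$ reaches this endpoint and stays there through all intermediate constant and monotone phases, giving constancy on $[s,s']$.
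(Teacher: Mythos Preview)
Your plan is sound and correctly identifies the four verifications needed; the paper itself gives no proof beyond stating that the lemma ``can be easily deduced from the above construction'', so you are supplying exactly the details the authors leave to the reader.

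One refinement worth making explicit in the cross-piece case of (DP): property~(iv) (distinctness of the $\lambda_k$) tells you that the common value $\widetilde{\gamma}(s)=\widetilde{\gamma}(s')$ must be an endpoint of both $I_{\lambda_i}+n$ and $I_{\lambda_j}+n$ (or of $I_{\lambda_i}+n$ and $\IR$), but what actually rules out the ``wrong'' endpoint is property~(i), the maximality of $t_k'$. Indeed the endpoints of $I_{\lambda_i}+n$ are $\gamma(t_i)$ and $\gamma(t_{i+1})$; if the common value were $\gamma(t_i)$ or $\gamma(t_{i+1})$ coinciding with an endpoint $\gamma(t_j)$ or $\gamma(t_{j+1})$ of a non-adjacent piece (or with $r(x_0)=\gamma(t^*)$), this would mean $\gamma$ revisits $\gamma(t_i)$ (resp.\ $\gamma(t_{i+1})$) at a time strictly beyond $t_i'$ (resp.\ $t_{i+1}'$), contradicting~(i). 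So the only surviving possibility is $j=i+1$ with common value $\gamma(t_{i+1})$, and then your monotonicity argument on $[s,t_{i+1}]\cup[t_{i+1},t_{i+1}']\cup[t_{i+1}',s']$ gives constancy. With this point made precise, your argument is complete.
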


\begin{lemm}\label{lem:const(gammatilde)}
Let $\gamma$ be a path  to $+\infty$. If
$\widetilde{\gamma}(a) \neq \gamma(a)$,
then there exist $s_1,s_2$ such that $s_1 < a < s_2$ and
$\gamma(s_1)=\gamma(s_2)=\widetilde{\gamma}(a)$
for all $t\in [s_1,s_2]$.
\end{lemm}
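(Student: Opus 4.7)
The plan is to return to the inductive construction of $\widetilde{\gamma}$ in Definition~\ref{def:gammatilde} and split into cases according to which piece of the construction contains $a$. That construction partitions $[0,+\infty)$ into three types of consecutive intervals: the ``plateau'' pieces $[t_k,t_k']$ on which $\widetilde{\gamma}$ is constant equal to the point $\gamma(t_k)=\gamma(t_k')$; the ``traversal'' pieces $[t_k',t_{k+1}]$, contained in a single basic interval $I_{\lambda_k}+n$, on which $\widetilde{\gamma}$ is the running $\ordint{\lambda_k}$-maximum of $\gamma$ from time $t_k'$; and the tail $[t^*,+\infty)\subset\IR$ on which $\widetilde{\gamma}(t)=\max r\circ\gamma([t^*,t])$. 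In each case the hypothesis $\widetilde{\gamma}(a)\neq \gamma(a)$ will pinpoint the times $s_1<a<s_2$ we seek.

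For a plateau $a\in[t_k,t_k']$, the definition of $t_k'$ gives $\gamma(t_k)=\gamma(t_k')=\widetilde{\gamma}(a)$; as $\widetilde{\gamma}(a)\neq\gamma(a)$, necessarily $a\in(t_k,t_k')$, and one takes $s_1:=t_k$, $s_2:=t_k'$. For a traversal $a\in[t_k',t_{k+1}]$, write $I:=I_{\lambda_k}+n$, ordered by $\ordint{\lambda_k}$; the restriction $\gamma\evalat{[t_k',t_{k+1}]}$ is a continuous path in $I$ from $\gamma(t_k')$ to the opposite endpoint $\gamma(t_{k+1})$. By construction $\widetilde{\gamma}(a)$ is the $\ordint{\lambda_k}$-maximum of $\gamma([t_k',a])$, so some $s_1\in[t_k',a]$ satisfies $\gamma(s_1)=\widetilde{\gamma}(a)$; the strict inequality $\gamma(a)\,\lhd_{\lambda_k}\widetilde{\gamma}(a)$ forced by $\widetilde{\gamma}(a)\neq\gamma(a)$ excludes $s_1=a$, so $s_1<a$. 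The chain $\gamma(a)\,\lhd_{\lambda_k}\widetilde{\gamma}(a)\ordint{\lambda_k}\gamma(t_{k+1})$ combined with an intermediate value argument applied to $\gamma\evalat{[a,t_{k+1}]}$ (which is a continuous map into the topological interval $I$) then produces $s_2\in(a,t_{k+1}]$ with $\gamma(s_2)=\widetilde{\gamma}(a)$.

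The tail case $a\in[t^*,+\infty)$ is handled along the same lines, using the natural order on $\IR$: I would take $s_1:=\min\set{s\in[t^*,a]}{r\circ\gamma(s)=\widetilde{\gamma}(a)}$ and $s_2:=\inf\set{s>a}{r\circ\gamma(s)>\widetilde{\gamma}(a)}$, the latter being well-defined because $r\circ\gamma(t)\to+\infty$. The main obstacle here, and the one delicate point of the whole proof, is upgrading the retraction-level identity $r\circ\gamma(s_i)=\widetilde{\gamma}(a)$ to the honest identity $\gamma(s_i)=\widetilde{\gamma}(a)$ in $T$. This is where Definition~\ref{def:liftedspace}(ii) is crucial: were $\gamma(s_i)$ to lie in a connected component $C$ of $T\setminus\IR$, then since $\overline{C}\cap\IR$ is a single point and $r$ is constant on $\overline{C}$ (Lemma~\ref{lem:const-r}(a)), a whole neighbourhood of $s_i$ would map under $r\circ\gamma$ to the value $\widetilde{\gamma}(a)$, contradicting the minimality (respectively the infimum) choice of $s_i$. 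Thus $\gamma(s_i)\in\IR$, and therefore $\gamma(s_i)=\widetilde{\gamma}(a)$, finishing the proof.
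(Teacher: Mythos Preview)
Your proof is correct and follows the same three-case split (plateau $[t_k,t_k']$, traversal $[t_k',t_{k+1}]$, tail $[t^*,+\infty)$) as the paper, with only cosmetic differences in how $s_1,s_2$ are pinned down (you construct them directly; the paper takes a maximal constancy interval of $\widetilde{\gamma}$ containing $a$ and then checks that $\gamma$ agrees with $\widetilde{\gamma}$ at its endpoints). One caveat: the statement's dangling ``for all $t\in[s_1,s_2]$'' is meant to say $\widetilde{\gamma}(t)=\widetilde{\gamma}(a)$ on that interval (this is precisely what Lemma~\ref{lem:path-periodicity} uses), so in your traversal case you should take $s_2$ to be the \emph{first} time after $a$ at which $\gamma$ returns to $\widetilde{\gamma}(a)$, not an arbitrary one from the intermediate value argument---your tail case already does the analogous thing via the infimum.
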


\begin{proof}
We use the same notation as in the definition of $\widetilde{\gamma}$
above. There are three cases when $\widetilde{\gamma}(t)$ and
$\gamma(t)$ can differ:
\begin{case}{1}$t_i < a < t_i'$ for some integer $i \geq 0$.\end{case}
Then
$\widetilde{\gamma}(t) = \gamma(t_i) = \gamma(t_i')$
for all $t\in [t_i,t_i']$.
In this case we take $s_1 = t_i$ and $s_2 = t_i'$.

\begin{case}{2}$t_{i-1}' < a < t_i$ for some integer $k\geq 1$.\end{case}
There exists $z \in [t_{i-1}',a)$ such that
$\gamma(a) <_{\widetilde{\gamma}} \gamma(z) = \widetilde{\gamma}(a)$,
and thus $\widetilde{\gamma}\evalat{[z,a]}$ is constant.
Recall that, by Definition~\ref{def:gammatilde},
$\gamma(t_{i-1}') = \widetilde{\gamma}(t_{i-1}')$,
$\gamma(t_{i}) = \widetilde{\gamma}(t_{i})$ and
$\widetilde{\gamma}(t) = \max_{\ordint{\lambda_i}} \gamma([t_i',t])$
for all $t \in [t_{i-1}',t_i]$. Taking all this and the continuity of
$\gamma$ and $\widetilde{\gamma}$ into account, it follows that there
exists a maximal interval $[s_1,s_2] \subset [t_{i-1}',t_i]$
containing $a$ such that
$\widetilde{\gamma}\evalat{[s_1,s_2]}$ is constant, $\gamma(s_1) =
\widetilde{\gamma}(s_1)$ and $\gamma(s_2) = \widetilde{\gamma}(s_2)$.
Since $\gamma(a)\neq\widetilde{\gamma}(a)$, we have $s_1<a<s_2$.
This ends the proof of the lemma in this case.

\begin{case}{3}$a > t^*$.\end{case}
By Definition~\ref{def:gammatilde},
$\widetilde{\gamma}(t^*)=\gamma(t^*)\in\IR$ and
\[
\widetilde{\gamma}(t) =\max r\circ \gamma([t^*, t])
\]
for all $t\geq t^*$. Since $\lim_{t\to+\infty} r\circ \gamma(t) =
+\infty$, there exists $t'>a$ such that $r\circ \gamma(t')\geq
\widetilde{\gamma}(a)$. A similar argument as before shows that there
exist a maximal interval $[s_1,s_2] \subset [t^*,+\infty)$ such that
$s_1<a<s_2$, $\widetilde{\gamma}\evalat{[s_1,s_2]}$ is constant,
$r\circ \gamma(s_1) = \widetilde{\gamma}(s_1)$ and $r\circ \gamma(s_2)
= \widetilde{\gamma}(s_2)$. The maximality of the interval $[s_1,s_2]$
implies that $r\circ\gamma(s_i)=\gamma(s_i)\in\IR$ for $i=1,2$. This
concludes the proof of the lemma.
\end{proof}

\medskip
Suppose that $\gamma_0$ is a direct path such that
$\gamma_1=\widetilde{F\circ \gamma_0 }$ is comparable with $\gamma_0$
and $\gamma_1(\IR^+)\supset \gamma_0(\IR^+)$. If there exist $x\in
\gamma_0(\IR^+)$ and $y\in \IR$, $y> r(x)$, such that
$F(x)\leq_{\gamma_1} x$ and $y\leq r\circ F(y)$, then this looks much
like a positive covering of $[x,y]$ by itself ($[x,y]$ being seen as a
subinterval inside the half-line $\gamma_0(\IR^+)$). This situation
does indeed imply the existence of a fix point: the next lemma states
this result in the more general setting of successive iterations of
$F$.

Recall that $\Fcr$ denotes the map $r\circ F\evalat{\IR}$.

\begin{lemm}\label{lem:path-periodicity}
Let $F\in\Li$ and let $\gamma$ be a path from $x_0$ to $+\infty$.
Define $\gamma_0=\widetilde{\gamma}$ and
$\gamma_{n+1}=\widetilde{F\circ \gamma_n}$ for all $n \ge 0$.
Suppose that for some $n\geq 1$ the paths $\gamma_n$ and $\gamma_0$
are comparable and $F^n(x_0)\leq_{\gamma_n} x_0$, and suppose that
there exists $y\in\IR$ such that $(\Fcr)^j(y)\geq y$ for all $1\leq
j\leq n$. Then there exists $z$ such that  $F^n(z)=z$.
\end{lemm}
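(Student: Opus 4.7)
The approach is to construct a positive-covering loop for $F^n$ on a compact interval of $\IR$ and then invoke Proposition~\ref{prop:+cover-periodic} to extract the fixed point. First I would use Lemma~\ref{lem:degree1-Fn}(a)---which gives $(\Fcr)^j(y+k)=(\Fcr)^j(y)+k$ for every $k\in\IZ$---to replace $y$ by an integer translate and thereby assume $y\in \gamma_0([0,+\infty))\cap\IR$ with $y>r(x_0)$. I would then dispose of the easy sub-case $\gamma_n([0,+\infty))\subset\gamma_0([0,+\infty))$: in that situation $F^n(x_0)=\gamma_n(0)$ lies in $\gamma_0$, and since $x_0=\gamma_0(0)$ is the $\leq_{\gamma_0}$-minimum of $\gamma_0$, one has $F^n(x_0)\geq_{\gamma_0}x_0$; the orderings $\leq_{\gamma_0}$ and $\leq_{\gamma_n}$ agree on the common direct-path image (both paths are direct and head to $+\infty$), so combining with $F^n(x_0)\leq_{\gamma_n}x_0$ forces $F^n(x_0)=x_0$, and I take $z=x_0$.

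In the main case $\gamma_0\subset\gamma_n$, comparing the real portions of the two direct paths gives $r(F^n(x_0))\leq r(x_0)<y$. Setting $y_j:=(\Fcr)^j(y)$, the hypothesis gives $y_j\geq y$ for $0\leq j\leq n$. The plan is to construct a chain of positive coverings
\[
[A_0,y]\pluscover{F}[A_1,y_1]\pluscover{F}\cdots\pluscover{F}[A_n,y_n],
\]
with $A_0:=r(F^n(x_0))$, right-endpoint conditions satisfied automatically via $\Fcr(y_{j-1})=y_j$, and left endpoints $A_j$ chosen (inductively, by minimising $\Fcr$ over $[A_{j-1},y_{j-1}]$ and invoking the intermediate paths $\gamma_{n-j}$ together with the comparability $\gamma_0\subset\gamma_n$) so that $A_n\leq A_0$. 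Then $[A_0,y]\subset[A_n,y_n]$, and Lemma~\ref{lem:+cover-basic}(a) allows me to shrink the last target to $[A_0,y]$, turning the chain into a loop $[A_0,y]\pluscover{F}\cdots\pluscover{F}[A_0,y]$. Proposition~\ref{prop:+cover-periodic} applied with all $n_i=q_i=1$ and $p_i=0$ then yields $z\in[A_0,y]$ with $F^n(z)=z$.

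The hard part will be producing the $A_j$ so that $A_n\leq A_0$, i.e., so the loop actually closes. The intuition is that the path condition $r(F^n(x_0))\leq r(x_0)$ encodes a backward displacement that, propagated $n$ steps through $\Fcr$, should bring the left endpoint back below $A_0$; however this requires bridging the gap between $(\Fcr)^n$ (where the hypothesis on $y$ lives) and $r\circ F^n$ (which governs positive coverings), two maps that differ precisely when the $F$-orbit leaves $\IR$. A cleaner alternative for this step is to bypass the chain construction entirely and apply Lemma~\ref{lem:fixed-point-non-constant} directly to $(\Fcr)^n$ on an interval $[A,y]$, obtaining a fixed point $z$ of $(\Fcr)^n$ with $z\notin\Const((\Fcr)^n)$; then Lemma~\ref{lem:composition-Const} (with $n_1=\cdots=n_n=1$, so $G_n=(\Fcr)^n$) gives $F^n(z)=(\Fcr)^n(z)=z$. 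Either way, the crux is extracting from the path hypothesis a real point $A<y$ with $(\Fcr)^n(A)\leq A$, which is where the comparability of $\gamma_0$ and $\gamma_n$ must be used decisively.
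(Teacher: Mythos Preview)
Your approach has a genuine structural gap: both routes you propose (positive covering on a real interval, or a fixed point of $(\Fcr)^n$) can only produce a fixed point $z\in\IR$, yet the lemma's conclusion allows --- and sometimes requires --- $z\in T\setminus\IR$. The ``crux'' you isolate, namely finding $A\in\IR$ with $(\Fcr)^n(A)\le A$, is not merely hard; it can be outright impossible under the hypotheses. Here is a simple scenario. Take $T=\IR$ with a single branch $B=[0,1]_B$ attached at $0$, and let $F(x)=x+\varepsilon$ on $\IR$ (so $\Fcr(x)=x+\varepsilon$), while on the branch let $F$ map $[1/2,1]_B$ homeomorphically onto $B$ fixing the endpoint $1_B$, and map $[0,1/2]_B$ into $[0,\varepsilon]\subset\IR$. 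With $x_0=1_B$ and $n=1$, the direct path $\gamma_0$ goes down $B$ and out along $[0,+\infty)$; one checks that $\gamma_1=\widetilde{F\circ\gamma_0}$ has the same image, $F(x_0)=x_0$ so $F(x_0)\le_{\gamma_1}x_0$, and any $y\in\IR$ satisfies $\Fcr(y)=y+\varepsilon\ge y$. All hypotheses hold, and the only fixed point of $F$ is $1_B\notin\IR$. Meanwhile $(\Fcr)^n(A)=A+n\varepsilon>A$ for every $A\in\IR$, so no real $A$ with your property exists, and your positive-covering chain cannot close.

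The paper's proof avoids this by working not on $\IR$ but on the arc $I:=\gamma_0([0,t])\subset T$ (with $\gamma_0(t)=y$ for a suitably translated $y$), which may run through branches. It first proves inductively that $\gamma_j(t)\ge(\Fcr)^j(y)\ge y$, so $J:=\gamma_n([0,t])\supset I$. It then defines a map $G\colon I\to J$ by $G(\gamma_0(s)):=\gamma_n(s)$; this is well-defined because constancy intervals of $\gamma_0$ propagate to $\gamma_n$. Lemma~\ref{lem:fixed-point-non-constant}, applied after identifying $I$ with a real interval, gives $z\in I$ with $G(z)=z$ and $z\notin\Const(G)$. Finally Lemma~\ref{lem:const(gammatilde)} is used step by step to show that $z\notin\Const(G)$ forces $\gamma_{j+1}(s)=F(\gamma_j(s))$ at the relevant parameter, hence $G(z)=F^n(z)$. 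The essential idea you are missing is that the ``interval'' on which one runs the fixed-point argument is the direct-path arc $\gamma_0([0,t])$ itself, not a subinterval of $\IR$.
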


\begin{proof}
Since $F$ has degree one, by taking $y+k$ with $k \in \IZ$
sufficiently large instead of $y$, we may assume that $r\circ
F^j(x_0)< y$ for all $0\leq j\leq n$. There exists $t\in (0,+\infty)$
such that $\gamma_0(t)=y$.

We show by induction that
\begin{equation}\label{eq:*}
  \gamma_j(t) \geq (\Fcr)^j(y)\text{ for all $0\leq j\leq n$}.
\end{equation}
This is clearly true for $j=0$ by the choice of $t$. Suppose now that
$\gamma_j(t)\geq (\Fcr)^j(y)$ for some $j \in \{0,\dots, n-1\}$ and
prove it for $j+1$. By assumption, $(\Fcr)^j(y)\geq y> r\circ
F^j(x_0)$, and thus there exists $t'\leq t$ such that
$\gamma_j(t')=(\Fcr)^j(y)$. Since $\gamma_{j+1}$ is a direct path, we
have
$\gamma_{j+1}(t)\geq\gamma_{j+1}(t'):=\widetilde{F\circ\gamma_j}(t')$
by Remark~\ref{rem:dirisnonicr}. Also,
\[
r\circ F(\gamma_j(t')) = \Fcr((\Fcr)^j(y)) = (\Fcr)^{j+1}(y).
\]
Moreover, observe that $F \circ \gamma_j$ is a path starting at
$F^{j+1}(x_0)$ and, by the assumptions,
\[
r\circ F(\gamma_j(t')) = (\Fcr)^{j+1}(y) \ge y > r\circ F^{j+1}(x_0).
\]
Thus, we are in the part $[t^*,+\infty)$ of
Definition~\ref{def:gammatilde},
and hence
$\widetilde{F\circ\gamma_j}(t') \geq r\circ F(\gamma_j(t'))$.
Summarising we have shown that
$\gamma_{j+1}(t) \geq (\Fcr)^{j+1}(y)$; which ends the proof of the
induction step.

Set $I=\gamma_0([0,t])$ and $J=\gamma_n([0,t])$. Clearly both sets
are homeomorphic to closed intervals of the real line, $I$ has
endpoints $x_0$ and $y$ while the endpoints of $J$ are
$F^n(x_0)$ and $\gamma_n(t) \geq (\Fcr)^n (y)\geq y$
(Equation \eqref{eq:*} for $i=n$). By assumption,
$F^n(x_0)\leq_{\gamma_n} x_0$, and thus $I\subset J$. We define a map
{\map{G}{I}[J]} as follows. Given a point $x \in I$ take $t \in
\IR^+$ such that $\gamma_0(t) = x$ and then set $G(x) = \gamma_n(t)$.
We have to show that the map $G$ is well defined. Let $U_x$
denote $\set{t \in \IR^+}{\gamma_0(t) = x}$. If $\Card(U_x) > 1$
then, since $\gamma_0 = \widetilde{\gamma}$ is a direct path, $U_x$
is an interval where it is constant. Consequently, one can easily
prove inductively that $\gamma_{i+1}=\widetilde{F\circ\gamma_i}$ is
also constant on $U_x$ for $0\leq i\leq n-1$.
In particular $\gamma_n$ is constant on $U_x$.

The map $G$ is continuous. Then, by identifying $J$ with an interval
on $\IR$ we can use Lemma~\ref{lem:fixed-point-non-constant} to prove
that there exists $z \in I$ such that $G(z)=z$ and $z \notin
\Const(G)$.

It remains to show that $G(z)=F^n(z)$. Let $a$ be such that $z =
\gamma_0(a)$. Then, we have to show that $\gamma_n(a) =
F^n\circ\gamma_0(a)$. Suppose that, for some $j< n$,
$\gamma_{j+1}(a) \neq F\circ\gamma_j(a)$
and
$\gamma_j(a)=F^j\circ \gamma_0(a)$.
Applying Lemma~\ref{lem:const(gammatilde)} to the path
$F\circ\gamma_j$, we find that there exist $s_1<a<s_2$ such that
$F\circ\gamma_j(s_1)=F\circ\gamma_j(s_2)=\gamma_{j+1}(s)$
for all $s\in [s_1,s_2]$. Then, $\gamma_0(s_1) \neq \gamma_0(a)$
because
$F\circ\gamma_j(s_1) = \gamma_{j+1}(a) \neq F\circ\gamma_j(a)$, and
for the same reason $\gamma_0(a) \neq \gamma_0(s_2)$. Since the map
$\gamma_0$ is non decreasing, $z=\gamma_0(a)$ is in the interior of
$[\gamma_0(s_1),\gamma_0(s_2)]$. Moreover, $G$ is constant on this
interval, and thus $z\in\Const(G)$, which is a contradiction. We
conclude that $\gamma_{j+1}(a)= F\circ\gamma_j(a)$ for all $0\leq
j<n$, and thus $\gamma_n(a) = F^n\circ\gamma_0(a)$.
\end{proof}

\medskip
The next lemma is the key tool in the proof of
Theorem~\ref{theo:endpoints}.

\begin{lemm}\label{lem:R>0}
Let $F\in\Li$ be such that $\Clos{\TR}=T$ and $0 \le \min
\Rot(F)$. Suppose that there exists $y\in\IR$ such that
$(\Fcr)^n(y)\geq y$ for all $n\geq 1$, and $F^n(x) \neq x$ for all
$x\in T$ and $n\in\IN$. Then $\min\Rot(F)>0$.
\end{lemm}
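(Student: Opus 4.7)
The plan is to proceed by contradiction: I will assume $\min \Rot(F) = 0$ and derive a point $z$ with $F^m(z) = z$ for some $m \ge 1$, contradicting the hypothesis $F^n(x) \neq x$. By Theorem~\ref{theo:RR(F)=R(F)} (applicable since $\Clos{\TR} = T$) combined with Theorem~\ref{theo:RR-closed-interval}, there exists $x_0 \in \IR$ with $\rho_F(x_0) = 0$ and $F^n(x_0) \in \IR$ for infinitely many $n$. I will set $\gamma_0 \colon [0,+\infty) \to \IR$ to be $\gamma_0(t) := x_0 + t$, a direct path from $x_0$ to $+\infty$ lying entirely in $\IR$, and inductively $\gamma_n := \widetilde{F \circ \gamma_{n-1}}$; each $\gamma_n$ will then be a direct path from $F^n(x_0)$ to $+\infty$. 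The aim is to locate integers $k<l$ satisfying the hypotheses of Lemma~\ref{lem:path-periodicity} with initial path $\gamma_k$ and iteration count $l-k$.

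The first key step will exploit Lemma~\ref{lem:nbclasses} together with the finiteness of basic intervals modulo $\IZ$: direct paths to $+\infty$ fall into only finitely many equivalence classes once one quotients by $\IZ$-translation and comparability. Since $F$ has degree one, the identity $\widetilde{F \circ (\gamma + k)} = \widetilde{F \circ \gamma} + k$ makes $\gamma \mapsto \widetilde{F \circ \gamma}$ compatible with integer translations. Pigeonholing on $(\gamma_n)_{n\geq 0}$ will yield integers $n_0 \ge 0$, $p \ge 1$, and $k_1 \in \IZ$ such that, for every $j \ge 0$, $\gamma_{n_0 + jp}$ is comparable to $\gamma_{n_0} + jk_1$. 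Since comparable paths share the same starting basic interval, this will give $r(F^{n_0+jp}(x_0)) = r(F^{n_0}(x_0)) + jk_1 + O(1)$; dividing by $j$ and invoking $\rho_F(x_0) = 0$ will force $k_1 = 0$. Consequently all $\gamma_{n_0 + jp}$ will be literally comparable, and the points $\{F^{n_0 + jp}(x_0)\}_{j\geq 0}$ will lie in a common compact set on which the ``comparability-class ordering'' $\ordint{\lambda}$ is well-defined.

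Since $F^n(x) \neq x$ for all $x, n$, the sequence $\{F^{n_0 + jp}(x_0)\}_{j\geq 0}$ will consist of pairwise distinct points, giving an infinite bounded subset of the compact set above. I will argue that this sequence cannot be $\ordint{\lambda}$-monotone in the direction of increasing path --- otherwise it would converge to a limit $L$ satisfying $F^p(L) = L$ by continuity, contradicting our hypothesis. Hence there exist $j < j'$ with $F^{n_0 + j'p}(x_0) \lhd_\lambda F^{n_0 + jp}(x_0)$, which will give $\gamma_{n_0 + jp} \subset \gamma_{n_0 + j'p}$ and $F^{n_0 + j'p}(x_0) \le_{\gamma_{n_0 + j'p}} F^{n_0 + jp}(x_0)$. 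Finally, applying Lemma~\ref{lem:path-periodicity} with initial path $\gamma_{n_0+jp}$, iteration count $m := (j'-j)p$, and the point $y$ of the hypothesis (so that $(\Fcr)^i(y) \ge y$ holds for $1 \le i \le m$), will produce the desired $z\in T$ with $F^m(z) = z$.

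The main technical difficulty will lie in establishing $k_1 = 0$ and the resulting confinement of the points $F^{n_0 + jp}(x_0)$ to a compact set, particularly should the orbit $\{F^n(x_0)\}_{n\geq 0}$ be unbounded in $\IR$. Handling that eventuality may require a finer selection of $x_0$ or a separate argument that uses $(\Fcr)^n(y) \ge y$ more directly: for instance, if the circle-map lift $\Fcr$ has no fixed point one may show $\min \Rot(\Fcr) > 0$ outright, while if $\Fcr$ does have a fixed point, the hypothesis $F^n(x) \ne x$ forces it to lie in $\B(T)$, and a dedicated argument around branching points will then be needed to complete the proof.
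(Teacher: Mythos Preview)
Your outline has a genuine gap at the pigeonhole step. You assert that pigeonholing yields $n_0,p,k_1$ with $\gamma_{n_0+jp}$ comparable to $\gamma_{n_0}+jk_1$ \emph{for every} $j\ge 0$. This would follow if the assignment $\gamma \mapsto \widetilde{F\circ\gamma}$ descended to a well-defined self-map on comparability classes modulo $\IZ$-translation, so that eventual periodicity of the induced orbit is forced. You correctly check compatibility with integer translations, but you never check --- and in fact it is false --- that comparability is preserved: if $\gamma'([0,\infty))\subset\gamma([0,\infty))$ are comparable direct paths, the paths $\widetilde{F\circ\gamma}$ and $\widetilde{F\circ\gamma'}$ need not be comparable. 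Concretely, take $T$ with a single branch at $0$ and suppose $F\circ\gamma$ starts at some $A<0$ in $\IR$, runs to $0$, enters the branch up to a point $C$, returns to $0$, and proceeds to $+\infty$; then $\widetilde{F\circ\gamma}$ has image $[A,+\infty)\subset\IR$, while for the tail $F\circ\gamma'$ starting at $C$ one gets $\widetilde{F\circ\gamma'}$ with image $\chull{C,0}\cup[0,+\infty)$, and neither image contains the other. Without this step you cannot conclude $k_1=0$, nor confine the points $F^{n_0+jp}(x_0)$ to a single basic interval, and the rest of the argument does not get started. Your final paragraph candidly admits that further ``dedicated arguments'' would be needed; as written this is an incomplete sketch rather than a proof.

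The paper proceeds quite differently, and directly rather than by contradiction. It first replaces $F$ by an iterate $G=F^{\ell s}$ so that $T\setminus G(\IR)$ lies in small neighbourhoods of the finitely many periodic {\modi} endpoints in $T\setminus\TR$ (each of which has strictly positive rotation number by hypothesis). From Lemma~\ref{lem:path-periodicity} it extracts the consequence that $r\circ G^n(x)>x$ for every $x\in\IR$ and $n\ge1$. The heart of the proof is then a quantitative \emph{Claim}: there is an $\widetilde{N}$ such that no orbit segment can remain in $X-k$ with $k\ge0$ for $\widetilde{N}$ consecutive $G$-steps. The Claim is proved by pigeonholing the \emph{finite} family $\{G^n(y)\}_{0\le n<N}$ over basic intervals and comparability classes (so no inductive preservation of comparability is needed), using that comparable iterates must be strictly $<_\sigma$-ordered, and bounding the resulting displacement via $\delta_j=\min_x\nu(G^j(x),x)>0$. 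The Claim is then fed into a three-case analysis yielding an explicit uniform lower bound $\orhos[G](x)\ge\min\{\delta/N_1,\,1/(2\widetilde{N})\}>0$ for every $x\in T$, hence $\min\Rot(F)>0$.
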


\begin{proof}
By Lemma~\ref{lem:periodic-endpoints}, there exists a finite subset
$A$ such that $T\setminus\TR=A+\IZ$ and there exist a $s \in \IN$
which is common to all elements of $A$ and integers $\{i_a'\}_{a\in
A}$ such that for all $a\in A$, $F^s(a) = a + i_a'$. Hence, the
rotation number of every $a\in A$ exists and we have $\rhos{F}(a) =
i_a'/s$. Moreover, for each $a \in A$, $i_a' \ge 0$ because
$\min\Rot(F) \ge 0$. Moreover, $i_a'\geq 1$ because otherwise $a =
F^{s}(a)$, which contradicts our assumptions.

For every $a\in A$, let $V_a$ be a neighbourhood of $a$ such that
$F^{s}(V_a) \subset X+i_a'$. By Lemma~\ref{lem:T'}(b),
$\{F^n(\IR)\}_{n\geq 0}$ is an increasing sequence of connected sets
whose union is $\TR$. By assumption, $T= \Clos{\TR}$. Thus, there
exists an integer $\ell \geq 0$ such that $F^{\ell s}(\IR)$ contains
$X \setminus \bigcup_{a\in A} V_a$. Set $G = F^{\ell s}$. We have $T
\setminus G(\IR) \subset \bigl(\bigcup_{a\in A} V_a \bigr) + \IZ$ and
$G(a) = a + i_a$ for all $a\in A$, where $i_a=\ell i_a'\geq 1$.

In the rest of the proof we will use the distance $\nu$
on $T$ introduced in Definition~\ref{def:almosttaxicab}.
With this notation we set
\[
\delta_n := \min\set{\nu(G^n(x),x)}{x\in T}
          = \min\set{\nu(G^n(x),x)}{0\leq r(x)\leq 1}.
\]
Notice that $\delta_n > 0$ for all $n\geq 1$ because $G^n(x) \neq x$
for all $x \in T$.

Given a point $x \in T$ and a path $\gamma$ from $x$ to $+\infty$ we
iteratively define the following directed
paths: $\gamma_0 = \widetilde{\gamma}$ and
$\gamma_{i+1}=\widetilde{G\circ\gamma_i}$ for all $i \ge 0$.
Then, by Lemma~\ref{lem:path-periodicity}, the following property
holds for each $n \in \IN$.
\begin{equation}\label{eq:gammatilde}\text{\begin{minipage}{22em}
Either the paths $\gamma_0$ and $\gamma_n$ are not comparable or the
inequality $G^n(x) \leq_{\gamma_n} x$ does not hold.
\end{minipage}}\end{equation}
Suppose in addition that  $x\in\IR$ and $x\geq r\circ G^n(x)$ for some
$n\geq 1$. The path $\gamma_n$ goes from $G^n(x)$ to $+\infty$, and
thus its image contains $[r(G^n(x)),+\infty)\supset [x,+\infty)$.
Therefore, $\gamma_n$ and $\gamma_0$ are comparable and
$G^n(x)\leq_{\gamma_n}
x$, which contradicts \eqref{eq:gammatilde}. Consequently, we have:
\begin{equation}\label{eq:gammatildeR}
\forall x\in \IR,\ \forall n\geq 1,\ x<r\circ G^n(x).
\end{equation}

Now we prove the following claim, that means that if the orbit of some
point $y$ remains in $X+\IZ$ then it cannot go too much to the left
and has to go to the right of $y$ in bounded time.

\medskip \paragraph{\bf Claim:}
There exists $\widetilde{N} \in \IN$ such that if $y\in X$ verifies
that
\[
 \text{there exists $k_n\geq 0$ such that $G^n(y)\in X-k_n$,}
\]
for all $n = 1,\dots,N-1$, then $N< \widetilde{N}$.

\medskip\paragraph{\emph{Proof of the claim.}}
We know that
$G(\IR) \supset X \setminus \bigcup_{a\in A} V_a$.
Hence, since $G \in \Li$, there exists a point $x_0\in\IR$, $x_0<1$,
such that
$G([x_0,+\infty)) \supset X \setminus \bigcup_{a\in A} V_a$.
We denote the number $1 - \lfloor x_0 \rfloor$ by $p$, where
$\lfloor x_0 \rfloor$ denotes the integer part of $x_0$. Clearly,
$p$ bounds the number of copies of $X$ of the form $X-n$
fitting between $x_0$ and $1$.

Fix a path $\xi$ from $y$ to $+\infty$ and define
$\xi_0 = \widetilde{\xi}$ and
$\xi_{n+1} = \widetilde{G\circ\xi_n}$ for all $n\geq 0$.
We are going to show that a large proportion of
$\{G^n(y)\}_{0 \leq n < N}$ lie in the same basic interval and then a
large proportion of paths
$\{\xi_n\}_{0 \leq n < N}$
start in this particular interval and are comparable.

The point $y$ does not belong to $\bigcup_{a\in A} V_a$ because
$G(V_a) \subset X+i_a$ with $i_a\geq 1$. Hence
$y \in G(\IR) = G([x_0,+\infty))$.
Let $x\in\IR$, $x\geq x_0$ be such that $G(x)=y$.
Equation~\eqref{eq:gammatildeR} implies that
$x<r(G^{n+1}(x))=r(G^n(y))$
for all $n\geq 0$, and thus
$x_0 < r\circ G^n(y)$ for all $0 \leq n \leq N-1$,
and hence $0 \leq k_n \leq p-1$. In other words, the number of basic
intervals that contain one of the points $(G^n(y))_{0 \leq n < N}$ is
at most $p\Card(\Lambda)$. By the drawers principle, there exists a
basic interval $Y$ such that
\[
\Card(\set{0\leq n \leq N-1}{G^n(y)\in Y})
\geq
\frac{N}{p\Card(\Lambda)}.
\]

By Lemma~\ref{lem:nbclasses}, the number of equivalence classes of
comparable paths starting from $Y$ is finite. Let $q$ be this number.
Let $[\sigma]_Y$ denote the equivalence class of a path $\sigma$
starting in $Y$. Again by the drawers principle, there  exists a path
$\sigma$ such that the number of elements of the set
\[
{\mathcal N} = \set{0 \leq n \leq N-1}{G^n(y)\in Y \text{ and }
[\xi_n]_Y = [\sigma]_Y}
\]
is at least $\tfrac{N}{m}$, where $m$ denotes $pq\Card(\Lambda)$.
Write
\[
{\mathcal N}=\{n_1,n_2,\dots,n_k\}\text{ with } k\geq\frac{N}{m}
\text{ and } n_1<n_2<\cdots<n_k.
\]
We will show that there cannot be too many elements in ${\mathcal N}$
because of \eqref{eq:gammatilde}. If we choose $\sigma$ to be maximal
with respect to the inclusion relation of images then, for every $n
\in {\mathcal N}$, the ordering $\le_{\xi_n}$ is a restriction of
$\le_{\sigma}$. Since $\xi_n(0) = G^n(y)$, all the points
$\{G^n(y)\}_{n\in{\mathcal N}}$ belong to $Y$ and are ordered by
$\le_{\sigma}$. If $n_i,n_j \in {\mathcal N}$ with $n_i > n_j$ then
$G^{n_i}(y) \leq_{\sigma} G^{n_j}(y)$ contradicts
\eqref{eq:gammatilde} applied to the path $\xi_{n_i}$ and initial path
$\xi_{n_j}$ (notice that, by definition of ${\mathcal N}$, $\xi_{n_i}$
and $\xi_{n_j}$ are equivalent and hence comparable). Therefore
\begin{equation}\label{eq:Gny-ordered}
G^{n_1}(y)<_{\sigma}G^{n_2}(y)<_{\sigma}\cdots<_{\sigma}
G^{n_k}(y).
\end{equation}
This implies that $\nu(G^{n_k}(y),G^{n_1}(y))=\sum_{i=1}^{k-1}
\nu(G^{n_i}(y),G^{n_{i+1}}(y))$. This observation will be used to
find a lower bound of $\nu(G^{n_k}(y),G^{n_1}(y))$.

For $i=1,\dots,k-1$ set $j_i=n_{i+1}-n_i$. We have
$j_i \geq 1$ and
\[
j_1+\cdots+j_{k-1} = n_k-n_1 \leq N.
\]
Define also
$M:=\Card(\set{1\leq i\leq k-1}{ j_i \leq 2m})$.
Then, there are $k-1-M$ integers $i$ such that $j_i \geq 2m+1$ and
for the rest we have $j_i\geq 1$. Thus,
\[
N \geq j_1+\cdots+j_{k-1} \geq M + (2m+1)(k-1-M).
\]
Hence
\[
M \geq \frac{(2m+1)(k-1)-N}{2m} \geq k - 1 - \frac{N}{2m}.
\]
Since
$k \geq \tfrac{N}{m}$ it follows that $M\geq\tfrac{N}{2m}-1$.
According to the definition of $\delta_j$, we have
$\nu(G^{n_{i+1}}(y),G^{n_i}(y)) \geq \delta_{j_i}$.
There are $M$ integers $i$ such that $j_i \leq 2m$. Thus, in
view of Equation~\eqref{eq:Gny-ordered}, we get that
$\nu(G^{n_k}(y),G^{n_1}(y)) \geq M\kappa$, where
$\kappa = \min\{\delta_1,\ldots,\delta_{2m}\} > 0$.
Let $L$ denote the maximal length of all the basic intervals. It
follows that
$
L \geq \nu(G^{n_k}(y)),G^{n_1}(y)) \geq M\kappa,
$
and thus $N \leq 2m\left(\tfrac{L}{\kappa} + 1\right)$.
This concludes the proof of the claim by setting
$\widetilde{N} > 2m\left(\tfrac{L}{\delta}+1\right)$
(recall that $m = pq\Card(\Lambda)$ and that $L$ and $\kappa$
depend only on $T$ and $G$).

To end the proof of the lemma it is enough to show that
$\orhos[G](x) > 0$ for all $x\in T$. Indeed, by
Lemma~\ref{lem:first-properties} in that case we will have
$0 < \orhos[G](x) = \ell s\orhos(x)$
which implies $\orhos(x) > 0$ for all $x\in T$. Since
$\Rot^+(F) \supset \Rot(F)$ the lemma holds.

To prove that $\orhos[G](x) > 0$ for all $x\in T$ we consider three
cases.

\begin{case}{1}$G^n(x)\in\IR$ for all $n\geq N$.\end{case}
Then $G^{n+1}(x) \geq G^n(x) + \delta_1$ for all $n\geq N$ and
$\orhos[G](x) \geq \delta_1 > 0$.

\begin{case}{2}$G^n(x)\in X+\IZ$ for all $n\geq N$.\end{case}
By the Claim there exist two sequences $\{n_i\}_{i\geq 0}$ and
$(k_i)_{i\geq 0}$ such that
$n_{i+1}-n_i\leq \widetilde{N}$,
$k_{i+1}\geq k_i+1$ and
$G^{n_i}(x)\in X+k_i$ for all $i\geq 0$. Hence
$\orhos[G](x)\geq \tfrac{1}{\widetilde{N}}>0$.

\begin{case}{3}Assume that we are not in the first two
cases.\end{case}
Then, there exists an increasing sequence $\{n_i\}_{i\geq 0}$ such
that, for all $i\geq 0$, $G^{n_i}(x)\in\IR$ and $G^j(x)\not\in \IR$
for all $j \in\{ n_i+1,\dots,n_{i+1}-1\}$. For these $j$, let
$q_j\in\IZ$ be
such that $G^j(x)\in X+q_j$. Let $\widetilde{N}$ be the integer given
by the Claim, and define:
\begin{align*}
C := & \max\set{|r\circ G^n(x)-r(x)|}{x\in T,\ n\leq \widetilde{N}}\\
N_1 := & \lceil 2\widetilde{N}(C+2) + 2\rceil \\
\delta := & \min\{\delta_1,\ldots,\delta_{N_1}\} > 0
\end{align*}
(recall that $\lceil \cdot \rceil$ denotes the ceiling
function). Observe that all direct paths going to $+\infty$ and
stating at some point $G^{n_i}(x)$ are comparable because $G^{n_i}(x)
\in \IR$ for all $i$. Consequently, by \eqref{eq:gammatilde},
$G^{n_{i+1}}(x) > G^{n_i}(x)$.

If $n_{i+1}-n_i\leq N_1$ then
\begin{equation}\label{eqcas3-1}
 \frac{G^{n_{i+1}}(x)-G^{n_i}(x)}{n_{i+1}-n_i} \geq
 \frac{\delta}{N_1}.
\end{equation}

If $n_{i+1}-n_i> N_1$, by the Claim, there exist
$n_i+1 = j_1 < j_2 < \cdots < j_k <n_{i+1}$
such that
$j_{i+1} - j_i \leq \widetilde{N}$,
$n_{i+1} - j_k \leq \widetilde{N}$ and
$q_{j_{t+1}} - q_{j_t} \geq 1$
for all $1 \leq t\leq k-1$.
Hence $k \geq \tfrac{n_{i+1}-n_i-1}{\widetilde{N}}$.
Since $q_{j_{t+1}} - q_{j_t} \geq 1$,
the point $G^{j_k}(x)$ belongs to $X+q_{j_1} + m$ for some
$m \geq k-1$, and thus
$r\circ G^{j_k}(x) - r\circ G^{j_1}(x) \geq k-2$.
Moreover, $G^{n_i}(x)<r\circ G^{j_1}(x)$ because of
\eqref{eq:gammatildeR}.
Therefore,
\begin{align*}
G^{n_{i+1}}(x) - G^{n_i}(x)
=  & (G^{n_{i+1}}(x) - r\circ G^{j_k}(x))
       + (r\circ G^{j_k}(x) - r\circ G^{j_1}(x))\\
   &   + (r\circ G^{j_1}(x) - G^{n_i}(x)) \geq  -C+(k-2)+0\\
\geq & \frac{n_{i+1}-n_i-1}{\widetilde{N}}-C-2 \\
  =  &  \frac{n_{i+1}-n_i}{\widetilde{N}}-(C+2+1/\widetilde{N}).
\end{align*}
The choice of $N_1$ implies that $N_1 \ge 2\widetilde{N}(C+2) + 2$
which is equivalent to $\tfrac{N_1}{2\widetilde{N}} \ge
C+2+1/\widetilde{N}$. Consequently, since $n_{i+1}-n_i> N_1$
\begin{align*}
 G^{n_{i+1}}(x) - G^{n_i}(x)
\ge &
   \frac{n_{i+1}-n_i}{\widetilde{N}}-(C+2+1/\widetilde{N}) \ge
   \frac{2(n_{i+1}-n_i)}{2\widetilde{N}} -
   \tfrac{N_1}{2\widetilde{N}} \\
\ge & \frac{n_{i+1}-n_i}{2\widetilde{N}},
\end{align*}
which is equivalent to
\begin{equation}\label{eqcas3-2}
\frac{G^{n_{i+1}}(x)-G^{n_i}(x)}{n_{i+1}-n_i} \geq
\frac{1}{2\widetilde{N}}.
\end{equation}
Summarising, Equations~\eqref{eqcas3-1} and \eqref{eqcas3-2} imply
that
\[
 \orhos[G](x) \geq
\min\left\{\frac{\delta}{N_1},\frac{1}{2\widetilde{N}}\right\} > 0.
\]
This ends the proof of the lemma.
\end{proof}

\medskip
Now we are ready to prove the main result of this section.

\begin{theo}\label{theo:endpoints}
Let $T\in\InfG$ and let $F\in\Li$. If $\min\RotR(F)=p/q$ (resp.
$\max\RotR(F)=p/q$), then there exists a periodic {\modi} point $x\in
T$ such that $\rhos{F}(x)=p/q$.
\end{theo}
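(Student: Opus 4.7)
The plan is to apply Lemma~\ref{lem:R>0} after two successive reductions. I treat only the case $\min\RotR(F)=p/q$; the case $\max\RotR(F)=p/q$ follows by passing to the reflected lifted space obtained from $h^*(x):=h(-x)$ and $\tau^*:=\tau^{-1}$. This reflected space again lies in $\InfG$, carries the same map $F$ with rotation set $-\RotR(F)$, and has the same periodic {\modi} points (since the notion of being periodic {\modi} is the same whether one uses $\tau$ or $\tau^{-1}$). Applying the min-case on the reflection yields the max-case.

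First I would reduce to rotation number $0$. Set $G:=F^q-p\in\Li$. By Lemma~\ref{lem:first-properties}, $\min\RotR(G)=q\min\RotR(F)-p=0$, and any point $x$ with $G^n(x)=x$ satisfies $F^{nq}(x)=x+np$, so it is a periodic {\modi} point of $F$ with $\rhos{F}(x)=p/q$ by Remark~\ref{rem:rotorbits}(ii). Secondly, restrict $G$ to $T':=\Clos{\TR}$, which lies in $\InfG$ and is $G$-invariant by Lemma~\ref{lem:T'}. Because $F$ has degree one, each $F^n(\IR)$ is $\tau$-invariant, hence $G^n(\IR)=F^{nq}(\IR)-np=F^{nq}(\IR)$, and the increasing union in Lemma~\ref{lem:T'}(b) gives $\bigcup_{n\geq 0}G^n(\IR)=\TR$, which is dense in $T'$. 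Therefore Theorem~\ref{theo:RR(F)=R(F)} applies to $G$ on $T'$ and yields $\Rot(G)=\RotR(G)=[0,\beta]$ on $T'$.

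Now I argue by contradiction: assume $F$ has no periodic {\modi} point in $T$ with rotation number $p/q$, so $G^n(x)\neq x$ for every $x\in T'$ and every $n\in\IN$. Write $g:=\Fcrbas{G}{1}$ for the degree-one circle lift $r\circ G\evalat{\IR}$. By Corollary~\ref{cory:incl}, $\Rot(g)\subset\RotR(G)=[0,\beta]$, and by classical circle rotation theory the non-decreasing lower lift $g_l$ of $g$ satisfies $\rho(g_l)=\min\Rot(g)\geq 0$. Classical Poincaré theory for non-decreasing degree-one circle lifts then furnishes $y\in\IR$ with $g_l^n(y)\geq y$ for all $n\geq 1$: if $\rho(g_l)=0$ take a fixed point, while if $\rho(g_l)>0$ any $y$ works, for $g_l(y)\leq y$ would iterate (using non-decreasingness) to $g_l^n(y)\leq y$ for all $n$, forcing $\rho(g_l)\leq 0$. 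Since $g_l$ is non-decreasing and $g_l\leq g$, Lemma~\ref{rem:order-iterated} gives $g^n(y)\geq g_l^n(y)\geq y$. All hypotheses of Lemma~\ref{lem:R>0} are thus satisfied for $G$ on $T'$, whence $\min\Rot(G)>0$, contradicting $\min\Rot(G)=0$.

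The main obstacle is the verification of the positivity hypothesis $g^n(y)\geq y$ for all $n$: it requires stepping down to the non-decreasing lower lift $g_l$ of the \emph{circle} map $g$ (where classical Poincaré theory is available) and then transferring the inequality back to $g$ via the monotone-iteration comparison of Lemma~\ref{rem:order-iterated}. The remaining pieces -- the reduction $F\leadsto F^q-p$, the restriction to $\Clos{\TR}$, and the use of the $\tau$-invariance of $F^n(\IR)$ -- are essentially bookkeeping once one recognises that Lemma~\ref{lem:R>0} is exactly the quantitative tool needed.
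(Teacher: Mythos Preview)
Your proof is correct and follows the same overall architecture as the paper: reduce to $\min=0$ via $G=F^q-p$, restrict to $\Clos{\TR}$ so that Theorem~\ref{theo:RR(F)=R(F)} identifies $\Rot(G)$ with $\RotR(G)$, and then derive a contradiction from Lemma~\ref{lem:R>0} under the assumption that $G$ has no periodic points.

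The one genuine difference is how you manufacture the point $y$ with $(\Fcrbas{G}{1})^n(y)\geq y$ for all $n$. The paper first observes that $\max\RotR(G)>0$ (treating the degenerate case $\RotR(F)=\{p/q\}$ separately), picks $N$ with $\max\Rot(\Fcrbas{G}{N})>0$, and then invokes Theorem~\ref{theo:Fn} to obtain a \emph{twist} orbit for $G^N$; the twist property immediately gives the desired inequality, at the cost of applying Lemma~\ref{lem:R>0} to the iterate $H=G^N$ rather than to $G$ itself. You instead stay with $G$ and exploit that $g:=\Fcrbas{G}{1}$ is a genuine degree-one circle lift with $\min\Rot(g)\geq 0$; passing to the continuous non-decreasing lower map $g_l$ (continuous because a circle map is trivially combed) and using $\rho(g_l)\geq 0$ yields $y$ with $g_l^n(y)\geq y$, and the comparison $g_l\leq g$ with $g_l$ non-decreasing transfers this to $g^n(y)\geq y$. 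This is a legitimate alternative: it avoids the passage to a higher iterate and the separate treatment of the degenerate case, at the price of importing the classical $g_l$ machinery for circle maps. Both routes lean on Lemma~\ref{lem:R>0} in exactly the same way, so the difference is purely in the auxiliary step.
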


\begin{proof}
We deal only with the case $p/q = \min \RotR(F)$. The other one
follows similarly.

If $\RotR(F) = \{p/q\}$ then $\Rot(\Fcr) = \{p/q\}$ by
Corollary~\ref{cory:incl} and, in view of Theorem~\ref{theo:Fn}, there
exists a periodic {\modi} point of rotation number $p/q$. In the
rest of the proof we suppose that $\max \RotR(F) > p/q$.

Set $\overline{T} = \Clos{\TR}$ and $G =
(F^q-p)\evalat{\overline{T}}$. By Lemma~\ref{lem:T'}(b),
$\overline{T}\in\InfG$ and $G\in\Li[\overline{T}]$. By
Lemma~\ref{lem:first-properties} we have $0=\min \Rot(G)$ and
$\max\Rot(G)>0$. Also, $\overline{T}=\Clos{\bigcup_{n\geq 0}
G^n(\IR)}$ by Lemma~\ref{lem:T'}(b). By
Theorem~\ref{theo:RR-closed-interval}, there exists a positive integer
$N$ such that $\max\tfrac{1}{N}\Rot(G_N^r)>0$, and, by
Theorem~\ref{theo:Fn}, there exists $y\in\IR$ such that
$\rhos{G_N^r}(y)>0$ and the orbit of $y$ for $G^N$ is twist. In
particular, $(G_N^r)^n(y)\geq y$ for all $n\geq 0$. Let $H=G^N$ (hence
$H_1^r=G_N^r$). If $H^n(x) \neq x$ for all $x\in T$ and $n\in\IN$,
then $\min\Rot(H)>0$ by Lemma~\ref{lem:R>0}, which is a contradiction.
Therefore, there exist $x\in T$ and $n\in\IN$ such that $H^n(x)=x$,
and thus $x$ is periodic {\modi} for $F$ and  $\rhos{F}(x)=p/q$.
\end{proof}

\begin{rema}
Unfortunately, the periodic {\modi} point given by
Theorem~\ref{theo:endpoints} may be in $T\setminus \IR$ and there may
not exist a periodic {\modi} point $x$ in $\IR$ with $\rhos{F}(x)$
being an endpoint of the rotation interval (see
Example~\ref{ex:gaps-in-Per(p/q)}).
\end{rema}

\section{Examples}\label{sec:examples}

This section is devoted to showing some examples to help understanding
the theoretical results of the previous sections. Attention is payed
to the differences between this case and the circle one. For easiness
the first two examples will be Markov. To be able to compute the
periods {\modi} and rotation numbers of these examples we need to
develop the appropriate machinery. So we will divide this section into
two subsections. In the first one we will introduce the theoretical
results to make the computations in the examples whereas in the second
one we provide the examples themselves. Some of the properties of
rotation sets for symbolic systems used here already appear in
\cite{Zie}.

\subsection{Preliminary results on Markov lifted graph maps}
We say that a subset of $T$ is an \emph{interval} if it is
homeomorphic to an interval of the real line and does not contain
vertices (except maybe at its endpoints). In other words, a subset of
$T$ is an interval if it is still homeomorphic to an interval after
removing the vertices of $T$.

An interval of $T$ can be endowed with two opposite linear orderings
compatible with its structure of interval. If $I, J$ are two intervals
of $T$, we choose arbitrarily one of these two orderings for each
interval, and we say that a map $\map{f}{I}[J]$ is \emph{monotone} if
it is monotone with respect to these orderings. Notice that this is
independent of the choice of the orderings.

Let $T\in\InfG$ and let $\nu$ be the distance on $T$ introduced in
Definition~\ref{def:almosttaxicab}. When $T$ is an infinite tree
(i.e., it is uniquely arcwise connected), then $\nu(x,y)$ coincides
with the \emph{taxicab metric} which gives the length of the shortest
path in $T$ from $x$ to $y$. We say that $\map{f}{I}[J]$ is
\emph{affine} if there exists $\lambda\in \IR$ such that for all
$x,y\in I$, $\nu(f(x),f(y)) = \lambda\nu(x,y)$. Observe that if $f$ is
affine then it is also monotone.

Now we adapt the well known notion of Markov map to the context of
lifting graphs.

\begin{defi}
Let $T\in \InfG$ and let $F\in \Li$. We say that $F$ is a
\emph{Markov map} if there exist compact intervals $P_1,\ldots,P_k$
such that
\begin{enumerate}[(i)]
\item the vertices of $T$ are included in $\bigcup_{i=1}^k \partial
P_i+\IZ$,

\item $(P_1\cup\cdots\cup P_k) +\IZ=T$,

\item if $i \neq j$ then $P_i\cap P_j$ contains at most one point,

\item for all $1\leq i\leq k$, $F(P_i)$ is an interval,
$\map{F\evalat{P_i}}{P_i}[F(P_i)]$ is monotone, and $F(P_i)$ is a
finite union of sets $\{P_j+n\}_{1\leq j\leq k, n\in\IZ}$.
\end{enumerate}
When we will need to specify it, we will say that \emph{$F$ is a
Markov map with respect to the partition $(P_1,\ldots,P_k)$}, or that
\emph{$(P_1,\ldots,P_k)$ is the Markov partition of $F$}.

The Markov map $F$ is called \emph{affine} if $F\evalat{P_i}$ is
affine for all $1\leq i\leq k$.

If $F(P_i)\supset P_j+n$, we write $P_i\labelarrow{n} P_j$. This gives
a finite labelled oriented graph, which is called the \emph{Markov
graph of $F$} and denoted by $\CG(F)$. If $B=\{B_1,\ldots,B_p\}$
and $A\labelarrow{n} B_i$ for all $1\leq i\leq p$, we also write (or
picture) $A\labelarrow{n} B$ for short.
\end{defi}

We now give some notations about paths in graphs that we will need
later.

Let $\CG$ be a finite labelled oriented graph. A \emph{(finite) path}
is a
sequence of labelled arrows in $\CG$ of the form
\[
\CA :=A_0\labelarrow{n_0} A_1\labelarrow{n_1} \cdots
      A_{p-1}\labelarrow{n_{p-1}} A_p.
\]
The \emph{length} of $\CA$ is $L(\CA)=p$ and its \emph{weight} is
$W(\CA)=n_0+\cdots+n_{p-1}$.

If $\CB := B_0\labelarrow{m_0}
B_1\labelarrow{m_1}\cdots B_{q-1}\labelarrow{m_{q-1}} B_q$ is another
path with $B_0=A_p$, we define the concatenated path as
\[
A_0\labelarrow{n_0} A_1\labelarrow{n_1}\cdots
A_{p-1}\labelarrow{n_{p-1}}
A_p \labelarrow{m_0} B_1\labelarrow{m_1}\cdots B_{q-1}
\labelarrow{m_{q-1}} A_q.
\]
Such a path will be denoted by $\CA \CB$. A path $\CA$ is called a
\emph{loop} if $A_0=A_p$. In such a case, $\CA^0$ denotes the empty
path and, for $n\geq 1$, $\CA^n$ denotes the path
\[
\overbrace{\CA\CA\cdots\CA}^{\text{$n$ times}}.
\]
Also, $\CA^\infty$ denotes the loop $\CA$ concatenated with itself
infinitely many times, which gives an infinite path.

A loop $\CA$ is called \emph{simple} if it is not of form $\CB^n$,
$\CB$ being a shorter loop and $n\geq 2$.
A loop is \emph{elementary} if it cannot
be formed by concatenating two loops, up to a circular permutation.
Equivalently,
$
\CA := A_0\labelarrow{n_0} A_1\labelarrow{n_1} \cdots
       A_{p-1}\labelarrow{n_{p-1}} A_0
$
is elementary if $A_0,\ldots,A_{p-1}$
are all pairwise different. Observe that the number of distinct
elementary loops in $\CG$ is finite.

If
$
\CA :=A_0\labelarrow{n_0} A_1\labelarrow{n_1} \cdots
      A_{p-1}\labelarrow{n_{p-1}} A_p\labelarrow{n_p}\cdots
$
is an infinite path, let $\CA_i^j$ denote the truncated path
$A_i\labelarrow{n_i}  \cdots \labelarrow{n_{j-1}}A_j$,
where $0\leq i<j$.

Suppose that $\CG$ is the Markov graph of a Markov map $F\in\Li$ and
let $x\in T$. We say that an infinite path
\[
\CA:=A_0\labelarrow{n_0} A_1\labelarrow{n_1} \cdots
     A_{p-1}\labelarrow{n_{p-1}} A_p\labelarrow{n_p}\cdots
\]
is an \emph{itinerary} of $x$ if there exists $n(x)\in\IZ$ such that
$F^i(x)\in A_i+n(x)+W(\CA_0^i)$ for all $i\geq 0$. If in addition
there exists a loop $\CB$ such that $\CA=\CB^{\infty}$, then we say
that $\CB$ is a \emph{periodic itinerary} of $x$.

\medskip
The following proposition is a version for lifted graph maps
of folk knowledge properties of Markov maps on finite topological
graphs.

\begin{prop}\label{prop:Fmarkov}
Let $T\in\InfG$ and let $F\in\Li$ be a Markov map with respect to the
partition $(P_1,\ldots,P_k)$.

\begin{enumerate}
\item If $F$ is an affine Markov map such that $\CG(F)$ is connected
and is not reduced to a unique loop, then $F$ is transitive
{\modi}.

\item  For every $x\in T$, there exists  an infinite path in $\CG(F)$
which is an itinerary of $x$.

\item If $x\in T$ is a periodic {\modi} point, then there exists a
simple loop $\CB$ in $\CG(F)$ which is a periodic itinerary of $x$.
Moreover, if the period {\modi} of $x$ is $p$ and
$F^i(x)\notin\bigcup_{j=1}^{k} \partial P_i+\IZ$ for all $0\leq i<p$,
then $p=L(\CB)$ and $F^p(x)=x+W(B)$.

\item Every infinite path in $\CG(F)$ is an itinerary of some point
$x\in T$. Every loop in $\CG(F)$ is a periodic itinerary of some
periodic {\modi} point $x$.
\end{enumerate}
\end{prop}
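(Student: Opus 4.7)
The plan is to establish the four statements in the natural order: (b) is the foundation and is proved first by an inductive construction; (c) and (d) build on (b) using the Markov combinatorial structure; (a) is argued separately using the affine hypothesis. For (b), I would proceed by induction on the length of the itinerary. Starting from $x \in T$, axiom (ii) provides $n(x) \in \IZ$ and $A_0 \in \{P_1, \ldots, P_k\}$ with $x \in A_0 + n(x)$. Assuming $A_0, \ldots, A_i$ have been chosen so that $F^j(x) \in A_j + n(x) + W(\CA_0^j)$ for every $j \le i$, I compute
\[
F^{i+1}(x) = F(F^i(x)) \in F(A_i) + n(x) + W(\CA_0^i);
\]
by axiom (iv), $F(A_i)$ is a finite union of sets $P_j + m$, so at least one of them contains $F^{i+1}(x)$, and choosing such a $P_j + m$ defines $A_{i+1} := P_j$ and the arrow $A_i \labelarrow{m} A_{i+1}$.

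For (c), I apply (b) to a periodic-mod-1 point $x$ of period $p$ with $F^p(x) = x + M$, exploiting the equivariance $F^{i+p}(x) = F^i(x) + M$ to choose the itinerary so that $A_{i+p} = A_i$ and the weights $n_i$ also repeat. This produces a loop $\CB_0$ of length $p$ with $\CB_0^\infty$ the itinerary. If $\CB_0$ is not simple, say $\CB_0 = \CC^k$ with $\CC$ simple of length $p/k$ and $k \ge 2$, then $\CC^\infty = \CB_0^\infty$ is still an itinerary of $x$ and $\CC$ serves as the required simple loop. Under the boundary-avoiding hypothesis, axiom (iii) forces uniqueness of the itinerary (each $F^i(x)$ lies in the interior of a unique basic interval), so $\CB$ is canonically determined; the equality $p = L(\CB)$ will then follow by arguing that the minimal-period hypothesis combined with monotonicity of each $F|_{A_j}$ prevents $\CB_0$ from being a proper power, which together with the formula for the weight gives $F^p(x) = x + W(\CB)$.

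For (d), I would use the classical nested-intersection argument to realize an infinite path $\CA = A_0 \labelarrow{n_0} A_1 \labelarrow{n_1} \cdots$: for a suitable integer $n_0(x)$ set
\[
K_n := \set{y \in A_0 + n_0(x)}{F^i(y) \in A_i + n_0(x) + W(\CA_0^i) \text{ for all } 0 \le i \le n}.
\]
The Markov condition together with monotonicity of each $F|_{A_j}$ guarantees that $K_n$ is a nonempty closed subinterval of $A_0 + n_0(x)$ on which $F^n$ is a homeomorphism onto $A_n + n_0(x) + W(\CA_0^n)$, so $\{K_n\}_{n \ge 0}$ is a decreasing sequence of nonempty compact sets and $\bigcap_n K_n$ is nonempty by compactness; any point in this intersection realizes $\CA$. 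To realize a loop $\CB = A_0 \labelarrow{n_0} \cdots \labelarrow{n_{L-1}} A_0$ by a periodic-mod-1 point, I note that after retraction to $\IR$ each Markov arrow $A_j \labelarrow{n_j} A_{j+1}$ translates into a positive covering (using the fact that $F|_{A_j}$ is monotone onto an interval containing $A_{j+1} + n_j$), so Proposition~\ref{prop:+cover-periodic} applied to the resulting loop produces a point $x$ with $F^L(x) = x + W(\CB)$ following the itinerary $\CB^\infty$.

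For (a), I would use the affine hypothesis to produce expansion. Since $\CG(F)$ is connected and is not reduced to a single loop, it contains a vertex of out-degree at least two, so some basic interval has an affine restriction whose image properly contains at least two partition pieces, forcing its slope to exceed $1$ in absolute value. Combining this with the connectedness of $\CG(F)$, every basic interval iterates into this expanding region, so for any nonempty open $U \subset T$ there exists an iterate $F^n(U)$ containing some basic interval, whence $\bigcup_{n \ge 0} F^n(U) + \IZ$ is dense in $T$ by a second use of connectedness of $\CG(F)$. The main obstacles I foresee are (i) in (c), rigorously ruling out proper-power itineraries in the boundary-avoiding case when the composed monotone map $F^{p/k}|_{A_0}$ can reverse orientation — this is the most delicate point and will require a careful case analysis exploiting that the minimality of $p$ forces the monotone return map to have no nontrivial periodic behaviour in the open interior of $A_0$; and (ii) in (a), making precise the expansion argument on an arbitrary infinite graph, where one must carefully track products of slopes along cycles of $\CG(F)$ rather than assuming a uniform global expansion factor.
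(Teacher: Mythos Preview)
The paper does not actually give a proof of this proposition: it is introduced as ``a version for lifted graph maps of folk knowledge properties of Markov maps on finite topological graphs'' and then stated without argument. So there is no authors' proof to compare against; I can only assess whether your plan would succeed.

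Your treatment of (b) and of the infinite-path half of (d) is the standard nested-interval construction and is fine.

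For the loop half of (d) there is a concrete problem with your plan. You propose to translate each Markov arrow $A_j\labelarrow{n_j}A_{j+1}$ into a positive covering and invoke Proposition~\ref{prop:+cover-periodic}. But positive covering is defined in the paper only for compact subintervals of $\IR$, and the partition intervals $P_i$ need not lie in $\IR$: they may sit in the branches, where $r$ collapses them to a point and the inequality framework of positive covering is vacuous. The clean fix is already implicit in your own argument: the set $K_L$ you build is a subinterval of $A_0$ on which $F^L$ is a monotone homeomorphism onto $A_0+W(\CB)$; hence $F^L-W(\CB)\colon K_L\to A_0$ is a monotone surjection from a subinterval of $A_0$ onto $A_0$, and a direct intermediate-value argument (in the spirit of Lemma~\ref{lem:fixed-point-non-constant}, but on the interval $A_0$ rather than on $\IR$) produces the required fixed point. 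No appeal to Proposition~\ref{prop:+cover-periodic} is needed.

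For the ``moreover'' clause in (c), the orientation-reversing obstruction you flag is genuine and is exactly where a careless write-up would go wrong. Note, however, that the paper only ever applies this proposition to \emph{affine} Markov maps whose graph is connected and not a single cycle (see Examples~\ref{ex:Per0not=N} and~\ref{ex:gaps-in-Per(p/q)}). In that setting the return map $g=F^{p/k}-W(\CC)$ is affine with slope of modulus strictly greater than $1$ (this is the same expansion you need for (a)), so $g^k$ has a \emph{unique} fixed point, which must then coincide with the fixed point of $g$; hence $F^{p/k}(x)=x+W(\CC)$ and minimality of $p$ is contradicted. For a general (non-affine) monotone Markov map the decreasing case with $k=2$ is not so easily dismissed, and you would need a further argument --- for instance, showing that the partition intervals can be assumed to sit in a single fundamental domain so that two interior points of $A_0$ never differ by a nonzero integer. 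Your case analysis is on the right track, but you should make explicit which extra ingredient (affine expansion, or a fundamental-domain normalisation of the partition) you are using to close it.

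For (a), the single-vertex-of-outdegree-two observation does not by itself give a slope exceeding $1$ on that piece (the two target intervals could be short), and even if it did, one still needs that \emph{every} long enough path in $\CG(F)$ accumulates expansion. The standard route --- which you gesture at in your final sentence --- is a Perron--Frobenius argument: strong connectedness plus ``not a single cycle'' forces the spectral radius of the transition matrix (weighted by interval lengths) to exceed $1$, giving uniform eventual expansion along every path; from there the density of $\bigl(\bigcup_{n\ge0}F^n(U)\bigr)+\IZ$ follows. Your outline is correct in spirit but would need this made precise to stand as a proof.
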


The next two lemmas show how the rotation numbers and the rotation
set can be deduced from the Markov graph.

\begin{lemm}\label{lem:rho-Markov}
Let $F\in\Li$ be a Markov map with $T\in\InfG$ and let $x\in T$ be
such that $\rhos{F}(x)$ exists. If the infinite path
$\CA:=A_0\labelarrow{n_0}\cdots A_i\labelarrow{n_i}A_{i+1}\cdots$
is an itinerary of $x$ in $\CG(F)$, then
\[
\rhos{F}(x)=\lim_{i\to+\infty}\frac{W(\CA_0^i)}{i}.
\]
If $\CB$ is a loop in $\CG(F)$ which is a periodic itinerary of $x$,
then
$\rhos{F}(x)=\tfrac{W(\CB)}{L(\CB)}$.
\end{lemm}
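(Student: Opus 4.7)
\medskip
\noindent\emph{Proof plan.} The plan is to unpack the definition of an itinerary and read off the rotation number directly from the weights, using only boundedness of the partition pieces together with the fact that $r(y+k) = r(y)+k$ for all $y\in T,\ k\in\IZ$.

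First I would fix the itinerary $\CA = A_0 \labelarrow{n_0} A_1 \labelarrow{n_1} \cdots$ of $x$ and an integer $n(x)$ such that $F^i(x) \in A_i + n(x) + W(\CA_0^i)$ for all $i \ge 0$. Applying the retraction $r$ and using Lemma~\ref{lem:const-r}(b), this gives $r(F^i(x)) \in r(A_i) + n(x) + W(\CA_0^i)$. Since $A_i \in \{P_1,\dots,P_k\}$ and each $P_j$ is compact, the quantity
\[
 M := \max_{1 \le j \le k} \operatorname{diam}\bigl(r(P_j)\bigr) + \max_{1 \le j \le k} |r(P_j)|
\]
is a finite constant that bounds $|r(F^i(x)) - n(x) - W(\CA_0^i)|$ uniformly in $i$. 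Consequently
\[
 \left|\frac{r(F^i(x)) - r(x)}{i} - \frac{W(\CA_0^i)}{i}\right|
 \le \frac{M + |r(x) - n(x)|}{i} \xrightarrow[i\to\infty]{} 0 .
\]
Since $\rhos{F}(x)$ exists by assumption, the left-hand term has a limit, and the estimate forces $W(\CA_0^i)/i$ to converge to the same limit $\rhos{F}(x)$. This establishes the first assertion.

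For the periodic case I would specialise to $\CA = \CB^\infty$ with $L(\CB)=p$ and $W(\CB)=w$. Then for every $k \ge 0$ one has $W(\CA_0^{kp}) = kw$, so along the subsequence $i = kp$ the ratio $W(\CA_0^i)/i$ equals $w/p = W(\CB)/L(\CB)$. To promote this to a genuine limit along all $i$, I would observe that between $i = kp$ and $i = (k+1)p$ the partial weight changes by at most $\max\bigl\{|n_0|+\cdots+|n_{j-1}| : 0 \le j \le p\bigr\}$, which is a constant independent of $k$; hence $W(\CA_0^i)/i$ and $W(\CA_0^{kp})/(kp)$ have the same limit. Combined with the first part, this gives $\rhos{F}(x) = W(\CB)/L(\CB)$.

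The only genuinely delicate point is the uniform bound $M$; once one notices that the pieces $P_j$ are finitely many compact intervals and that $r$ is continuous, everything else is a straightforward manipulation of the limit defining $\rhos{F}$. No fixed-point or covering machinery is needed here.
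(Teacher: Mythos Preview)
Your proof is correct and follows essentially the same approach as the paper's: both use the uniform boundedness of $r(P_1\cup\cdots\cup P_k)$ to show that $r\circ F^i(x)-W(\CA_0^i)$ is bounded, whence the first assertion, and then specialise to $\CA=\CB^\infty$ for the periodic case. Your extra argument in the periodic case (promoting the subsequential limit along $i=kp$ to a full limit) is harmless but redundant, since the first part already establishes that $\lim_i W(\CA_0^i)/i$ exists under the standing hypothesis that $\rhos{F}(x)$ exists.
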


\begin{proof}
Let $(P_1,\ldots,P_k)$ be the Markov partition of $F$. By
definition of an itinerary, $F^i(x)-n(x)-W(\CA_0^i)\in A_i$
for all $i\geq 0$. The set $\set{r(y)}{y\in P_1\cup\cdots\cup P_k}$ is
bounded, and $A_i\in\{P_1,\ldots,P_k\}$ for all $i\geq 0$. Therefore
$r\circ F^i(x)-W(\CA_0^i)$ is bounded too, and thus
$
\rhos{F}(x)=\lim_{i\to+\infty}\frac{W(\CA_0^i)}{i}.
$

Suppose that the loop $\CB$ is a periodic itinerary of $x$. Then
$\CA=\CB^{\infty}$ is an itinerary of $x$, and
$W(A_0^{iL(\CB)})=iW(\CB)$ for all $i\geq 0$. What precedes implies
that $\rhos{F}(x)=\tfrac{W(\CB)}{L(\CB)}$.
\end{proof}

\begin{lemm}\label{lem:R-Markov}
Let $F\in\Li$ be a transitive {\modi} Markov map with $T\in\InfG$ and
set
\[
m:=\min_{\CE}
\frac{W(\CE)}{L(\CE)}\quad\text{and}\quad
M:=\max_{\CE}\frac{W(\CE)}{L(\CE)},
\]
where $\CE$ ranges over the set of all elementary loops in $\CG(F)$.
Then $\Rot(F)=[m,M]$.
\end{lemm}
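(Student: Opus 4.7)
The plan is to exploit the fact that, by Theorem~\ref{theo:RR(F)=R(F)} applied to the transitive {\modi} map $F$ together with Theorem~\ref{theo:RR-closed-interval}, the rotation set $\Rot(F)=\RotR(F)$ is a non empty compact interval. Hence it will suffice to establish the two inclusions $[m,M]\subset\Rot(F)$ and $\Rot(F)\subset[m,M]$.

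The first inclusion is straightforward: I would pick elementary loops $\CE$ realising $m$ and $M$; by Proposition~\ref{prop:Fmarkov}(d) each is the periodic itinerary of a periodic {\modi} point $x$, and Lemma~\ref{lem:rho-Markov} tells me that $\rhos{F}(x)=W(\CE)/L(\CE)$. This places both $m$ and $M$ in $\Rot(F)$, and since $\Rot(F)$ is an interval, the conclusion $[m,M]\subset\Rot(F)$ follows immediately.

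For the reverse inclusion, I would take $\alpha\in\Rot(F)$ together with $x\in T$ such that $\rhos{F}(x)=\alpha$, use Proposition~\ref{prop:Fmarkov}(b) to produce an infinite itinerary $\CA=A_0\labelarrow{n_0}A_1\labelarrow{n_1}\cdots$ of $x$ in $\CG(F)$, and invoke Lemma~\ref{lem:rho-Markov} to write $\alpha=\lim_{i\to+\infty} W(\CA_0^i)/i$. The heart of the argument is a combinatorial decomposition of each truncation $\CA_0^i$: as long as the current walk contains a repeated vertex, I locate the earliest such repetition, i.e.\ the smallest $j$ for which there exists $l<j$ with $A_l=A_j$, and excise the cycle $A_l\labelarrow{n_l}\cdots\labelarrow{n_{j-1}}A_j$; by the minimality of $j$ the vertices $A_l,A_{l+1},\dots,A_{j-1}$ are pairwise distinct, so this cycle is an elementary loop. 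Iterating this excision terminates in finitely many steps and yields elementary loops $\CE_1^{(i)},\dots,\CE_{s_i}^{(i)}$ together with a residual walk $\mathcal{R}^{(i)}$ whose vertices are pairwise distinct, hence of length at most $K-1$, where $K$ is the number of vertices of $\CG(F)$, and satisfying $W(\CA_0^i)=\sum_j W(\CE_j^{(i)})+W(\mathcal{R}^{(i)})$ and $i=\sum_j L(\CE_j^{(i)})+L(\mathcal{R}^{(i)})$. Setting $T_i:=\sum_j W(\CE_j^{(i)})$ and $S_i:=\sum_j L(\CE_j^{(i)})$, the quotient $T_i/S_i$ is a convex combination of the ratios $W(\CE_j^{(i)})/L(\CE_j^{(i)})$, each belonging to $[m,M]$ by definition of $m$ and $M$, hence $T_i/S_i\in[m,M]$. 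Since $L(\mathcal{R}^{(i)})\le K-1$ and $W(\mathcal{R}^{(i)})$ stays bounded in absolute value (the graph $\CG(F)$ has finitely many edges, each carrying a fixed integer label), the fraction $W(\CA_0^i)/i$ has the same limit as $T_i/S_i$, which forces $\alpha\in[m,M]$.

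The main obstacle is really only notational: one has to carry out the loop-extraction step with enough care to see that the cycle obtained by selecting the first repeated vertex is genuinely elementary in the sense of the paper, and that the residue is still a walk in $\CG(F)$ with length decreased by exactly $L(\CE_j^{(i)})$. Once this combinatorial reduction is in place, the convex-combination estimate and the passage to the limit are entirely routine.
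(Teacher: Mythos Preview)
Your proof is correct and follows essentially the same strategy as the paper: reduce to showing $\Rot(F)\subset[m,M]$ via Lemma~\ref{lem:rho-Markov} and a decomposition of the itinerary into elementary loops, then use that the resulting ratio is a convex combination of the $W(\CE)/L(\CE)$. The only difference is bookkeeping: the paper first passes to a subsequence of times $k_i$ at which the itinerary returns to a fixed vertex $P$, so that each $\CA_{k_0}^{k_i}$ is already a loop and decomposes with no leftover, whereas you decompose every truncation $\CA_0^i$ and absorb the bounded residual walk in the limit; both devices work equally well.
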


\begin{proof}
By Theorem~\ref{theo:RR-closed-interval} $\RotR(F)$ is a compact
interval, and by Theorem~\ref{theo:RR(F)=R(F)} $\Rot(F)=\RotR(F)$
because $F$ is transitive {\modi}. By
Proposition~\ref{prop:Fmarkov}(d) and Lemma~\ref{lem:rho-Markov}, $m$
and $M$ belong to $\Rot(F)$, and hence $[m,M]\subset \Rot(F)$.

Let $x\in T$ such that $\rhos{F}(x)$ exists and let
$
\CA:=A_0\labelarrow{n_0} A_1\labelarrow{n_1} \cdots
     A_k\labelarrow{n_k}\cdots
$
be an itinerary of $x$, which exists by
Proposition~\ref{prop:Fmarkov}(b). Since the number of vertices in
$\CG(F)$ is finite, there exists $P$, an element of the partition, and
an increasing sequence $k_i$ such that $A_{k_i}=P$ for all $i\geq 0$.
By Lemma~\ref{lem:rho-Markov}, $\rhos{F}(x)=\rhos{F}(F^{k_0}(x))$ is
equal to
\[
\lim_{i\to\infty} \frac{W(\CA_{k_0}^{k_i})}{k_i-k_0}.
\]
If we decompose the loop $\CA_{k_0}^{k_i}$
into elementary loops, we see that the above quantity is a
barycentre of
\[
 \left\{\frac{W(\CE)}{L(\CE)}\,\colon
 \text{$\CE$ is an elementary loop of $\CG(F)$.}\right\}
\]
Hence, $\rhos{F}(x)\in[m,M]$.
\end{proof}

\subsection{The examples}
\begin{exem}\label{ex:Per0not=N}
\emph{$\Rot(F)=[-1/2,1/2]$,
$\Per(0,F)=\{1\}\cup\{n\geq 4\}$ and if $p/q\in \Rot(F)$ with $p,q$
coprime, $p \neq 0$ then $\Per(p/q,F)=q\IN$.}\bigskip

Let $F\in\Li$ be the affine Markov map represented in
Figure~\ref{fig:Per0not=N}. By Proposition~\ref{prop:Fmarkov}(a), $F$
is transitive {\modi}.

\begin{figure}[htb]
\centerline{\includegraphics{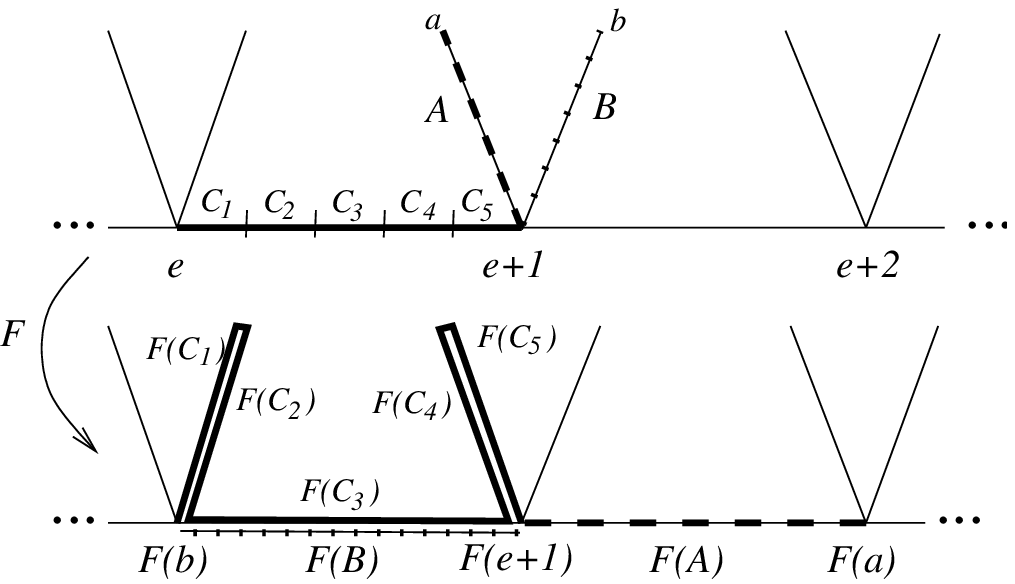}}
\par\medskip
\centerline{\includegraphics{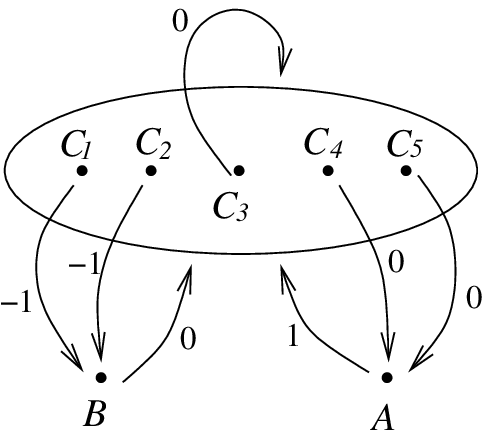}}
\caption{In the picture above the affine Markov map $F$. Below it is
displayed its Markov graph. In this example
$\Rot(F)=[-1/2,1/2]$,
$\Per(0,F)=\{1\}\cup\{n\geq 4\}$ and if $p/q\in \Rot(F)$ with $p,q$
coprime, $p \neq 0$ then $\Per(p/q,F)=q\IN$.\label{fig:Per0not=N}}
\end{figure}

We define:
\begin{align*}
\CA := & C_4\labelarrow{0} A\labelarrow{1} C_4,\\
\CB := & C_2\labelarrow{-1} B\labelarrow{0} C_2 \text{ and}\\
\CC := & C_3\labelarrow{0} C_3.
\end{align*}
The loops $\CB$ and $\CA$ correspond respectively to
$\min \tfrac{W(\CE)}{L(\CE)}$ and
$\max \tfrac{W(\CE)}{L(\CE)}$, where
$\CE$ describes the set of elementary loops.
Thus $\Rot(F)=[-1/2,1/2]$ by Lemma~\ref{lem:R-Markov}. Notice that
\[
\Rot(F)=\tfrac{1}{2}\Rot(F_2^r)
\quad\text{but}\quad
\tfrac{1}{3}\Rot(F_3^r)= [-1/3,1/3] \neq \Rot(F),
\]
and thus $\left\{\tfrac{1}{n}\Rot(\Fncr)\right\}_{n\geq 1}$ is not an
increasing sequence of sets.

We are going to show that $0$ is the unique rational $p/q\in\Rot(F)$
such that $\Per(p/q,F) \neq q\IN$. Moreover there is a ``gap'' in
$\Per(0,F)$: it contains $1$ and $4$ but not $2$ and $3$ (it is also
possible
to construct examples with more than one gap in $\Per(0,F)$).
This shows that $\Per(p/q,F)$ is not necessarily of the form
$\set{nq}{n\geq N}$ when $p/q\in \Int{\Rot(F)}$ and $p,q$ coprime.

We compute $\Per(p/q,F)$ by using Proposition~\ref{prop:Fmarkov} and
Lemma \ref{lem:rho-Markov}. If $x$ is an endpoint of one of the
intervals of the Markov partition then, either $x$ is not periodic
{\modi}, or $x=e\ \modi$ and $F(x)=x$.

\superparagraph{$\Per(0,F)=\{1\}\cup\{n\geq 4\}$}
From one side $1\in\Per(0,F)$ because $F(e)=e$. Also, there are no
simple loops of length $2$ or $3$ and weight $0$. Thus
$2,3\notin\Per(0,F)$. For all $k\geq 0$, the loop
\[
(C_2\labelarrow{-1} B\labelarrow{0} C_4\labelarrow{0} A\labelarrow{1}
C_3)\CC^k
\]
is simple, its length is $k+4$ and its weight is $0$ (if $k=0$, take
$C_2$ instead of $C_3$ to get a loop). Thus there exists a point $x$
such that $F^{k+4}(x)=x$ and $k+4\in\Per(0,F)$.

\superparagraph{$\Per(p/q,F)=q\IN$ if $p/q\in [-1/2,1/2]$, $(p,q) = 1$
and $p \neq 0$}
If $p\geq 1$, $q>2p$ and $n\geq 1$, we consider the loop
\[
\CA^{np-1}(C_4\labelarrow{0} A\labelarrow{1} C_3)\CC^{n(q-2p)-1}
(C_3\labelarrow{0} C_4)^{n(q-2p)-1}.
\]
It is simple, its length is $nq$ and its weight is $np$. Thus there
exists a periodic {\modi} point of period $nq$ and rotation number
$p/q$. For $q=2p$ and $np\geq 2$ consider the simple loop
\[
\CA^{np-2}
(C_4\labelarrow{0} A\labelarrow{1} C_5\labelarrow{0} A\labelarrow{1}
C_4).
\]

For $p=n=1$ and $q=2$ we consider the loop $\CA$.

If $p<0$ then the same arguments hold with $\CB$ instead of $\CA$.

Therefore, if $p/q\in [-1/2,1/2]$, $p \neq 0$ then $\Per(p/q,F)\supset
q\IN$. To conclude, we use that, if $p,q$ are coprime, then
$\Per(p/q,F)\subset q\IN$ by Proposition~\ref{prop:Per-subset-qN}.
\end{exem}

\begin{exem}\label{ex:gaps-in-Per(p/q)}
\emph{$\Rot(F)=[0,1]$, $\bigcup_{n\geq 1}\tfrac{1}{n}
\Rot(\Fncr)=(0,1]$ is not closed and there exist infinitely many
$p/q\in (0,1)$ with $p,q$ coprime such that $\Per(p/q,F) \neq
q\IN$.}\bigskip

Let $F\in\Li$ be the affine Markov map represented in
Figure~\ref{fig:gaps-in-Per(p/q)}. By
Proposition~\ref{prop:Fmarkov}(a), $F$ is transitive {\modi}.

\begin{figure}[htb]
\centerline{\includegraphics[width=25pc]{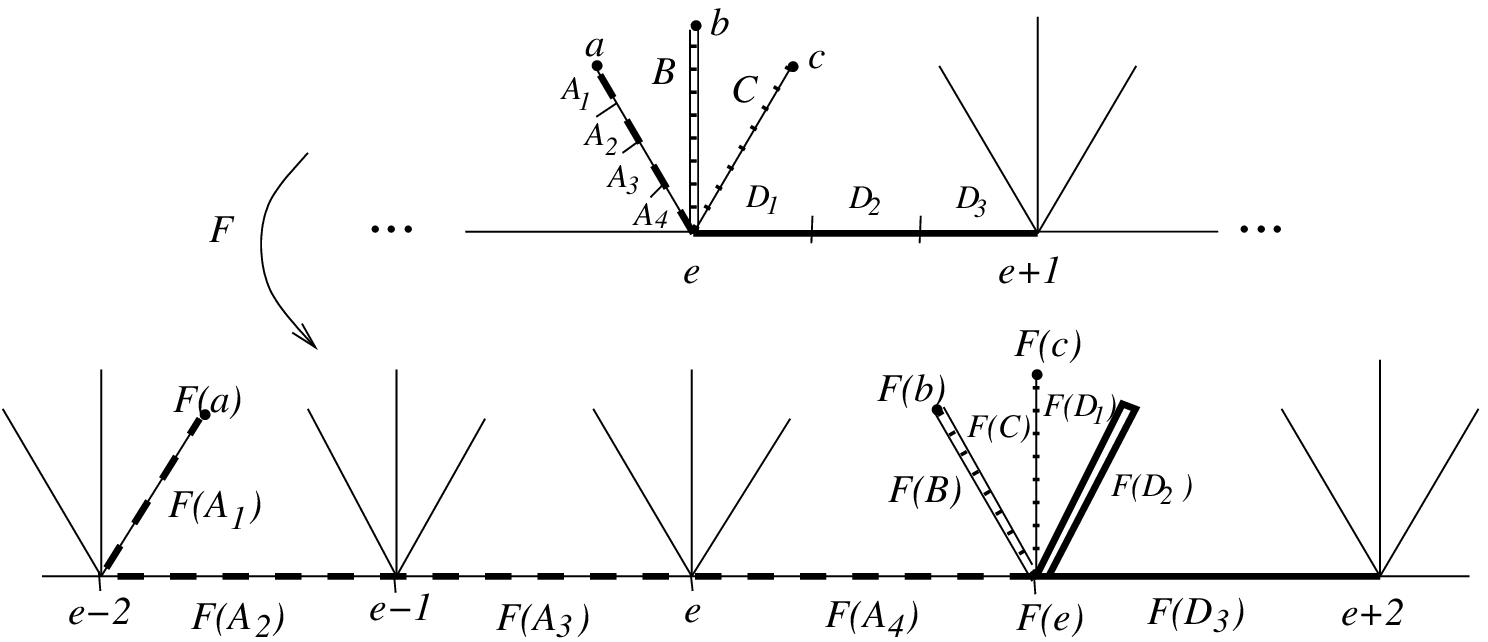}}
\par\medskip
\centerline{\includegraphics{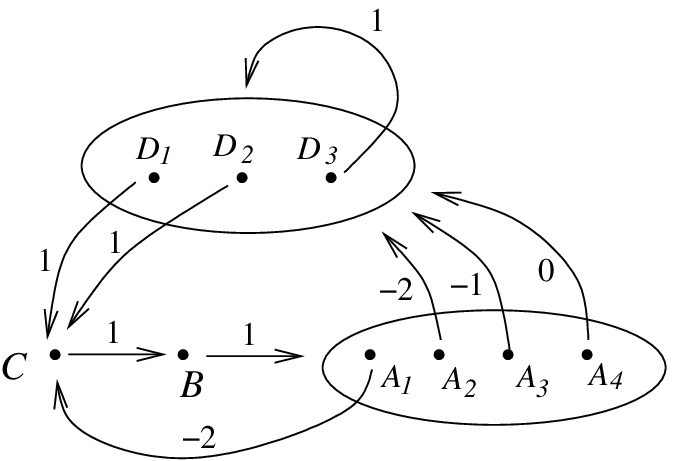}}
\caption{In the picture above the affine Markov map $F$. Below it is
displayed its Markov graph. In this example
$\Rot(F)=[0,1]$, $\bigcup_{n\geq 1}\tfrac{1}{n}
\Rot(\Fncr)=(0,1]$ is not closed and there exist infinitely many
$p/q\in (0,1)$ with $p,q$ coprime such that $\Per(p/q,F) \neq
q\IN$.\label{fig:gaps-in-Per(p/q)}}
\end{figure}

We define the loops
\begin{align*}
\CA := & C\labelarrow{1} B\labelarrow{1} A_1\labelarrow{-2} C, \\
\CB_{\eps} := & C\labelarrow{1} B\labelarrow{1} A
                \labelarrow{\eps} D_2\labelarrow{1} C,\text{ and} \\
\CD := & D_3\labelarrow{1} D_3,
\end{align*}
where  $\eps \in \{0,-1,-2\}$ and $A$ represents either
$A_2, A_3$ or $A_4$ depending on $\eps$.

The weights of $\CA$, $\CD$ and $\CB_{\eps}$ are respectively $0$, $1$
and $3+\eps$. Modifying $\CD$ into $D_2\labelarrow{1} D_3$ and
$\CB_{\eps}$ into $D_3\labelarrow{1} C\labelarrow{1} B\labelarrow{1} A
\labelarrow{\eps} D_2$, we can concatenate them. For short, we will
write $\CD\CB_{\eps}$ as the concatenated loop.

The only periodic {\modi} points which are endpoints of intervals of
the Markov partition are $e\ \modi$, with $F(e)=e+1$,  and $a,b,c$,
that form a periodic {\modi} orbit of period $3$ and rotation number
$0$. They correspond respectively to the loops $\CD$ and $\CA$.
Therefore, by Proposition~\ref{prop:Fmarkov}(c,d), there is a
correspondence between periodic {\modi} points of periods $p$ and
simple loops of length $p$.

The loops $\CA$ and $\CD$ correspond respectively to $\min
\tfrac{W(\CE)}{L(\CE)}$ and $\max \tfrac{W(\CE)}{L(\CE)}$, where $\CE$
describes the set of elementary loops. Thus $\Rot(F)=\RotR(F)=[0,1]$
by Lemma~\ref{lem:R-Markov}. According to
Theorem~\ref{theo:RR-closed-interval},
$(0,1) \subset \bigcup_{n\geq1}\tfrac{1}{n}\Rot(\Fncr)$.
The only simple loop of weight $0$ is $\CA$. It is the periodic
itinerary of $c$, which is of period $3$, and thus $\Per(0,F)=\{3\}$.
Moreover, $F^n(c)\notin\IR$ for all $n\geq 0$. Thus
$0\notin\Rot(\Fncr)$ by Theorem~\ref{theo:Fn}. The only simple loop
$\CL$ with $W(\CL)/L(\CL)=1$ is $\CD$. It is the periodic itinerary of
$e+1\in\IR$, and $F(e+1)=(e+2)$. Thus $\Per(1,F)=\{1\}$ and
$\bigcup_{n\geq 1}\tfrac{1}{n}\Rot(\Fncr)=(0,1]$.

We are going to compute $\Per(p/q,F)$ for all $p/q\in (0,1)$, $p,q$
coprime. The final results are given in Table~\ref{tab:Per(p/q)}.

\begin{table}[htb]
\[
\begin{array}{ccc}
 & & \Per(p/q,F)\\
\hline
p=1 & q\equiv 0\bmod\ 3 &\set{nq}{n\geq 3}\\
    & q\equiv 1\bmod\ 3 & q\IN\\
    & q\equiv 2\bmod\ 3 & \set{nq}{n\geq 2}\\
\hline
p=2 & q\equiv 0\bmod\ 3 & \set{nq}{n\geq 2}\\
    & q\equiv 1,2 \bmod\ 3 & q\IN\\
\hline
p\geq 3 & & q\IN\\
\hline
\end{array}
\]
\caption{Values of $\Per(p/q,F)$ for $p/q\in (0,1)$ and $p,q$
coprime.\label{tab:Per(p/q)}}
\end{table}

\smallskip\noindent$\bullet$
The only loops of weight $1$ are $\CD$ (length $1$) and
$\CB_{-2}\CA^k$ (which is of length $4+3k$), for all $k\geq 0$. Thus
there exists a periodic {\modi} point of period $n$ and rotation
number $1/n$ if and only if $n\equiv 1\bmod\
3$.

\smallskip\noindent$\bullet$
The only simple loops of weight $2$ are $\CB_{-2}^2\CA^k$ (length
$8+3k$), $\CB_{-1}\CA^k$ (length $4+3k$), and $\CB_{-2}\CD\CA^k$
(length $5+3k$), for all $k\geq 0$. Thus there exists a periodic
{\modi} point of period $n$ and rotation number $2/n$ if and only if
$n\equiv 1$ or $2$ $\bmod\ 3$ and $n\geq 4$.

\smallskip\noindent$\bullet$
Considering the simple loops $\CB_0\CA^k$, $\CD\CB_{-1}\CA^k$ and
$\CD^2\CB_{-2}\CA^k$ of weight $3$, we see that, for all $n\geq 4$,
there exists a periodic {\modi} point of period $n$ and rotation
number $3/n$. For all $n\geq 4$, we call $\CL_n$ the loop of length
$n$ among the above loops. We notice that $\CL_n$ passes through
$D_2$.

\smallskip\noindent$\bullet$
If $m\geq 4$ and $n>m$, then $n-m+3\geq 4$. The loop
$\CL_{(n-m+3)}\CD^{m-3}$ is of length $n$ and weight $m$, and thus it
gives a periodic {\modi} point of period $n$ and rotation number
$m/n$. This completes Table~\ref{tab:Per(p/q)}.

This example shows that there may exist infinitely many rationals
$p/q$, with $p$ and $q$ coprime, in the interior of the rotation
interval such that $\Per(p/q,F) \neq q\IN$, and the integer $N$
of Theorem~\ref{theo:set-of-periods} cannot be taken the same for the
whole interval $\RotR(F)$. Moreover, the interval
\[
\bigcup_{n \geq 1}\tfrac{1}{n}\Rot(\Fncr)
\]
may not be closed and, if $0 \in \partial\RotR(F)$, there may not
exist a periodic {\modi} point $x\in\IR$ with $\rhos{F}(x)=0$
(although there exists $x\in T$ with this property).

Compare also this situation with the one for combed maps. In view of
Theorem~\ref{theo:rotset}, the rotation interval of a combed map is a
closed interval and coincides with $\Rot(\Fcr)$. Moreover, in view of
Theorem~\ref{theo:setper}, $\Per(p/q,F) = q\IN$ for every $p/q\in
\Int(\Rot(F))$ with $p,q$ coprime. This example shows that both
statements can fail for a non combed map lifted graph map.
\end{exem}

\begin{exem}\label{ex:pb-endpoint}
\emph{$\RotR(F)=[0,1]$ but there is no periodic {\modi} point $x\in T$
such that $\rhos{F}(x)=0$.}\bigskip

We define $T$ as the following subset of $\IR^3$:
\[
 T = \set{(x,0,0)}{x\in \IR} \cup \set{((n,y,z)}{n\in \IZ
\text{ and } y^2 + z^2 \le 1}.
\]
Clearly, $T \in \InfX$. To be able to define a map $F$ on $T$ we
will identify the $z,y$-plane with $\IC$ taking the $y$ axis as the
real axis in $\IC$. Then we define the sets
\begin{align*}
 D = & \set{z\in\IC}{|z|\leq 1} \text{ and}\\
 \CC = & \set{z\in\IC}{|z|=1}.
\end{align*}
We identify the $x$-axis with $\IR$ and we denote $D+(n,0,0)$ by $D+n$
when $n\in\IZ$.
Note that to define a map $F \in \Li$ it is enough to define it on $D
\cup (x,0,0)$ with $x \in [0,1]$ and extend the definition to the
whole $T$ by using that $F(z+1)$ must be $F(z) +1$. Thus, we construct
our map by choosing $\alpha \in \IR\setminus\IQ$ defining
(see Figure~\ref{fig:pb-endpoint} for a representation of $T$ and
$F$):
\begin{enumerate}[(1)]
\item If $z \in D$ with $\tfrac{1}{2} \leq |z|\leq 1$ then $F(z)=z'\in
D$ with $|z'| = 2|z|-1 \in [0,1]$ and $\arg(z')=\arg(z)+2\pi\alpha$.

\item If $z\in D$ with $0 \leq |z| \leq\tfrac{1}{2}$ then
$F(z)=(1-2|z|, 0, 0) \in \IR$.

\item If $x\in [0,1/2]$, $F(x, 0, 0)=(1,1-4|x-1/4|, 0)$.

\item if $x \in [1/2,1]$, $F(x, 0, 0)=(2x, 0, 0)$.
\end{enumerate}
The map $F\evalat{\CC}$ is the rotation of angle $2\pi\alpha$.
\begin{figure}
\centerline{\includegraphics[width=25pc]{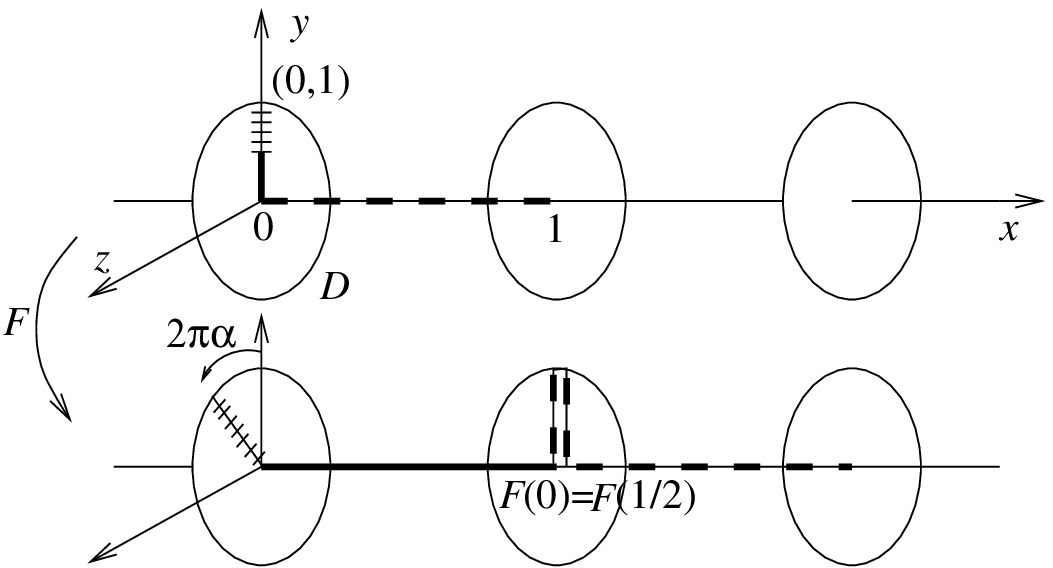}}
\caption{$T\in\InfX$, $F\in \Li$, $\RotR(F)=[0,1]$ but there
is no periodic {\modi} point $x\in T$ such that
$\rhos{F}(x)=0$.\label{fig:pb-endpoint}}
\end{figure}

We are going to show that $\RotR(F)=[0,1]$ but there is no periodic
{\modi} point $x\in T$ such that $\rhos{F}(x)=0$.

It is clear that, if $x\in D+\IZ$, then $r(x)\leq r(F(x))\leq r(x)+1$.
And, if $x\in \IR$, then $r(x)+\tfrac{1}{2}\leq r(F(x))\leq
r(x)+1$. Hence $\Rot(F)\subset [0,1]$. Moreover $F(0,0,0)=(1,0,0)$ and
$F^{n+1}(1/4,0,0)=F^n(1,1,0)= (1,e^{i2\pi n\alpha},0)$. Thus
$\rhos{F}(0,0,0)=1$, $\rhos{F}(1/4,0,0)=0$ and
$\RotR(F)=\Rot(F)=[0,1]$ by
Theorem~\ref{theo:RR-closed-interval}.

Suppose that $x\in T$ is a periodic {\modi} point such that
$\rhos{F}(x)=0$. Because of the properties stated above, $x$ cannot
belong to $\IR$, and there exists $k\in\IZ$ such that $F^n(x)\in D+k$
for all $n\geq 0$. By definition of $F|_D$, the point $x=(k,z)$ with
$k\in \IZ$ and $z \in D$ must belong to $\CC+k$. Thus
$F^n(x)=(k,ze^{i2\pi n\alpha}) \neq x$ for all $n\geq 1$. This is a
contradiction and, hence, $\Per(0,F)=\emptyset$.
\end{exem}

\bibliographystyle{plain}

\begin{thebibliography}{10}

\bibitem{AJM1}
Ll. Alsed{\`a}, D.~Juher, and P.~Mumbr{\'u}.
\newblock Sets of periods for piecewise monotone tree maps.
\newblock {\em Internat. J. Bifur. Chaos Appl. Sci. Engrg.}, 13(2):311--341,
  2003.

\bibitem{AJM3}
Ll. Alsed{\`a}, D.~Juher, and P.~Mumbr{\'u}.
\newblock On the preservation of combinatorial types for maps on trees.
\newblock {\em Ann. Inst. Fourier (Grenoble)}, 55(7):2375--2398, 2005.

\bibitem{AJM2}
Ll. Alsed{\`a}, D.~Juher, and P.~Mumbr{\'u}.
\newblock Periodic behavior on trees.
\newblock {\em Ergodic Theory Dynam. Systems}, 25(5):1373--1400, 2005.

\bibitem{AJM4}
Ll. Alsed{\`a}, D.~Juher, and P.~Mumbr{\'u}.
\newblock Minimal dynamics for tree maps.
\newblock {\em Discrete Contin. Dyn. Syst. Ser. A}, to appear.

\bibitem{ALM2}
Ll. Alsed{\`a}, J.~Llibre, and M.~Misiurewicz.
\newblock Periodic orbits of maps of {$Y$}.
\newblock {\em Trans. Amer. Math. Soc.}, 313(2):475--538, 1989.

\bibitem{ALM}
Ll. Alsed{\`a}, J.~Llibre, and M.~Misiurewicz.
\newblock {\em Combinatorial dynamics and entropy in dimension one}.
\newblock Advanced Series in Nonlinear Dynamics, 5. World Scientific Publishing
  Co. Inc., River Edge, NJ, 1993.

\bibitem{AMM}
Ll. Alsed{\`a}, F.~Ma{\~n}osas, and P.~Mumbr{\'u}.
\newblock Minimizing topological entropy for continuous maps on graphs.
\newblock {\em Ergodic Theory Dynam. Systems}, 20(6):1559--1576, 2000.

\bibitem{Bal}
S.~Baldwin.
\newblock An extension of sharkovski\u{\i}'s theorem to the {$n\text{-od}$}.
\newblock {\em Ergodic Theory Dynam. Systems}, 11(2):249--271, 1991.

\bibitem{BL}
S.~Baldwin and J.~Llibre.
\newblock Periods of maps on trees with all branching points fixed.
\newblock {\em Ergodic Theory Dynam. Systems}, 15(2):239--246, 1995.

\bibitem{Bern}
C.~Bernhardt.
\newblock Vertex maps for trees: algebra and periods of periodic orbits.
\newblock {\em Discrete Contin. Dyn. Syst.}, 14(3):399--408, 2006.

\bibitem{Bloc3}
L.~Block.
\newblock Homoclinic points of mappings of the interval.
\newblock {\em Proc. Amer. Math. Soc.}, 72(3):576--580, 1978.

\bibitem{BGMY}
L.~Block, J.~Guckenheimer, M.~Misiurewicz, and L.~S. Young.
\newblock Periodic points and topological entropy of one dimensional maps.
\newblock In {\em Global Theory of Dynamical Systems}, Lecture Notes in
  Mathematics, no. 819, pages 18--34. Springer-Verlag, 1980.

\bibitem{Ito}
R.~Ito.
\newblock Rotation sets are closed.
\newblock {\em Math. Proc. Cambridge Philos. Soc.}, 89(1):107--111, 1981.

\bibitem{LLl}
M.~C. Leseduarte and J.~Llibre.
\newblock On the set of periods for {$\sigma$} maps.
\newblock {\em Trans. Amer. Math. Soc.}, 347(12):4899--4942, 1995.

\bibitem{LPR}
J.~Llibre, J.~Para{\~n}os, and J.~A. Rodr{\'{\i}}guez.
\newblock Periods for continuous self-maps of the figure-eight space.
\newblock {\em Internat. J. Bifur. Chaos Appl. Sci. Engrg.}, 13(7):1743--1754,
  2003.
\newblock Dynamical systems and functional equations (Murcia, 2000).

\bibitem{Mis}
M.~Misiurewicz.
\newblock Periodic points of maps of degree one of a circle.
\newblock {\em Ergodic Theory Dynamical Systems}, 2(2):221--227 (1983), 1982.

\bibitem{RT}
F.~Rhodes and C.~L. Thompson.
\newblock Rotation numbers for monotone functions on the circle.
\newblock {\em J. London Math. Soc. (2)}, 34(2):360--368, 1986.

\bibitem{Sar}
A.~N. Sharkovs{$'$}ki{\u\i}.
\newblock Co-existence of cycles of a continuous mapping of the line into
  itself.
\newblock {\em Ukrain. Mat. \u Z.}, 16:61--71, 1964.
\newblock (in Russian).

\bibitem{Shartrans}
A.~N. Sharkovs{$'$}ki{\u\i}.
\newblock Coexistence of cycles of a continuous map of the line into itself.
\newblock In {\em Thirty years after Sharkovski\u\i's theorem: new perspectives
  (Murcia, 1994)}, volume~8 of {\em World Sci. Ser. Nonlinear Sci. Ser. B Spec.
  Theme Issues Proc.}, pages 1--11. World Sci. Publ., River Edge, NJ, 1995.
\newblock Translated by J. Tolosa, Reprint of the paper reviewed in MR1361914
  (96j:58058).

\bibitem{Wa}
C.~T.~C. Wall.
\newblock {\em A geometric introduction to topology}.
\newblock Addison-Wesley Publishing Co., Reading, Mass.-London-Don Mills, Ont.,
  1972.

\bibitem{ZMGG}
F.~Zeng, H.~Mo, W.~Guo, and Q.~Gao.
\newblock {$\omega$}-limit set of a tree map.
\newblock {\em Northeast. Math. J.}, 17(3):333--339, 2001.

\bibitem{Zie}
K.~Ziemian.
\newblock Rotation sets for subshifts of finite type.
\newblock {\em Fund. Math.}, 146(2):189--201, 1995.

\end{thebibliography}

\newpage

\noindent
{\scshape Llu\'{\i}s Alsed\`a}\footnote{Partially supported by the by MEC 
grant number MTM2005-021329.} -- Departament de Matem\`{a}tiques,
Edifici Cc, Universitat Aut\`{o}noma de Barcelona,
08913 Cerdanyola del Vall\`es, Barcelona,
Spain\\ {\tt alseda@mat.uab.cat}\\
{\tt http://www.mat.uab.cat/{\tiny$\sim$}alseda/}

\medskip\noindent
{\scshape Sylvie Ruette}\footnote{Partially supported by the Marie Curie 
Fellowship number HPMF-CT-2002-02026 of the European Community programme Human
Potential.} -- Laboratoire de Math\'ematiques,
CNRS UMR 8628, B\^atiment 425,
Universit\'e Paris-Sud 11,
91405 Orsay cedex,
France\\
{\tt sylvie.ruette@math.u-psud.fr}\\
{\tt http://www.math.u-psud.fr/{\tiny$\sim$}ruette/}

\end{document}